\numberwithin{equation}{section}
\newtheorem{theorem}{Theorem}[section]
\newtheorem{corollary}[theorem]{Corollary}
\newtheorem{lemma}[theorem]{Lemma}
\newtheorem{conjecture}[theorem]{Conjecture}
\newtheorem{problem}[theorem]{Problem}
\newtheorem{defn}[theorem]{Definition}
\theoremstyle{definition}
\newcommand{\comaj}{{\mathrm {comaj}}}
\newcommand{\maj}{{\mathrm {maj}}}
\newcommand{\inv}{{\mathrm {inv}}}
\newcommand{\minimaj}{{\mathrm {minimaj}}}
\newcommand{\sign}{{\mathrm {sign}}}
\newcommand{\area}{{\mathrm {area}}}
\newcommand{\dinv}{{\mathrm {dinv}}}
\newcommand{\cont}{{\mathrm {cont}}}
\newcommand{\Des}{{\mathrm {Des}}}
\newcommand{\Val}{{\mathrm {Val}}}
\newcommand{\Rise}{{\mathrm {Rise}}}
\newcommand{\Stir}{{\mathrm {Stir}}}
\newcommand{\Hilb}{{\mathrm {Hilb}}}
\newcommand{\grFrob}{{\mathrm {grFrob}}}
\newcommand{\des}{{\mathrm {des}}}
\newcommand{\asc}{{\mathrm {asc}}}
\newcommand{\coinv}{{\mathrm {coinv}}}
\newcommand{\rev}{{\mathrm {rev}}}
\newcommand{\Asc}{{\mathrm {Asc}}}
\newcommand{\Ind}{{\mathrm {Ind}}}
\newcommand{\SYT}{{\mathrm {SYT}}}
\newcommand{\SSK}{{\mathrm {SSK}}}
\newcommand{\Frob}{{\mathrm {Frob}}}
\newcommand{\triv}{{\mathrm {triv}}}
\newcommand{\initial}{{\mathrm {in}}}
\newcommand{\iDes}{{\mathrm {iDes}}}
\newcommand{\shape}{{\mathrm {shape}}}
\newcommand{\symm}{{\mathfrak{S}}}
\newcommand{\wt}{{\mathrm{wt}}}
\newcommand{\CC}{{\mathbb {C}}}
\newcommand{\QQ}{{\mathbb {Q}}}
\newcommand{\ZZ}{{\mathbb {Z}}}
\newcommand{\OP}{{\mathcal{OP}}}
\newcommand{\DDD}{{\mathcal{D}}}
\newcommand{\CCC}{{\mathcal{C}}}
\newcommand{\LD}{{\mathcal {LD}}}
\newcommand{\AAA}{{\mathcal{A}}}
\newcommand{\MMM}{{\mathcal{M}}}
\newcommand{\BBB}{{\mathcal{B}}}
\newcommand{\GS}{{\mathcal{GS}}}
\newcommand{\zz}{{\mathbf {z}}}
\newcommand{\xx}{{\mathbf {x}}}
\newcommand{\II}{{\mathbf {I}}}
\newcommand{\yy}{{\mathbf {y}}}
\newcommand{\TT}{{\mathbf {T}}}
\newcommand{\mm}{{\mathbf {m}}}
\begin{document}

\title[Ordered set partitions, generalized coinvariant algebras, and the Delta Conjecture]
{Ordered set partitions, generalized coinvariant algebras, and the Delta Conjecture}

\author{James Haglund}
\address
{Department of Mathematics \newline \indent
University of Pennsylvania \newline \indent
Philadelphia, PA, 19104-6395, USA}
\email{jhaglund@math.upenn.edu}

\author{Brendon Rhoades}
\address
{Department of Mathematics \newline \indent
University of California, San Diego \newline \indent
La Jolla, CA, 92093-0112, USA}
\email{bprhoades@math.ucsd.edu}

\author{Mark Shimozono}
\address
{Department of Mathematics, MC 0123 \newline \indent
460 McBryde Hall, Virginia Tech \newline \indent
225 Stanger St. \newline \indent
Blacksburg, VA, 24601, USA}
\email{mshimo@math.vt.edu}

\begin{abstract}
The symmetric group $\symm_n$ acts on the polynomial ring $\QQ[\xx_n] = \QQ[x_1, \dots, x_n]$
by variable permutation.  The invariant ideal $I_n$ is the ideal generated by all $\symm_n$-invariant
polynomials with vanishing constant term.  The quotient $R_n = \frac{\QQ[\xx_n]}{I_n}$ is called the 
{\em coinvariant algebra}.  The coinvariant algebra $R_n$ has received a great deal of study in algebraic and 
geometric combinatorics.  We introduce a generalization 
$I_{n,k} \subseteq \QQ[\xx_n]$ of the ideal $I_n$ indexed by two positive integers $k \leq n$.
The corresponding quotient $R_{n,k} := \frac{\QQ[\xx_n]}{I_{n,k}}$ carries a graded action of $\symm_n$ and
specializes to $R_n$ when $k = n$.
We generalize many of the nice properties of $R_n$ to $R_{n,k}$.  In particular, we describe the Hilbert series
of $R_{n,k}$, give extensions of the Artin and Garsia-Stanton monomial bases of $R_n$ to $R_{n,k}$,
determine the reduced Gr\"obner basis for $I_{n,k}$ with respect to the lexicographic monomial order,
and describe the graded Frobenius series of $R_{n,k}$.
Just as the combinatorics of $R_n$ are controlled by permutations in $\symm_n$, 
we will show that the combinatorics
of $R_{n,k}$ are controlled by ordered set partitions of $\{1, 2, \dots, n\}$ with $k$ blocks.
The {\em Delta Conjecture} of Haglund, Remmel, and Wilson is a generalization of the Shuffle Conjecture
in the theory of diagonal coinvariants.  We will show that
the graded Frobenius series of $R_{n,k}$ is (up to a minor twist) the $t = 0$ specialization 
of the combinatorial side of the Delta Conjecture.
It remains an open problem to give a bigraded $\symm_n$-module $V_{n,k}$ whose Frobenius image 
is even conjecturally equal to any of the expressions in the Delta Conjecture; our module $R_{n,k}$
solves this problem in the specialization $t = 0$.
\end{abstract}

\keywords{ordered set partition, coinvariant algebra, symmetric function}
\maketitle

\section{Introduction}
\label{Introduction}

The purpose of this paper is to generalize the {\em coinvariant algebra} -- a representation 
whose combinatorics is controlled by permutations -- to a new class of graded representations
whose combinatorics will be controlled by ordered set partitions.  Our new representations
will be related to Macdonald polynomial theory in that their graded Frobenius character will
be a specialization of the combinatorial side of the {\em Delta Conjecture}
of Haglund, Remmel, and Wilson \cite{HRW}.

Let us recall the classical coinvariant algebra $R_n$.
For a positive integer $n$, 
let $\xx_n = (x_1, \dots , x_n)$ be a list of $n$ variables.
The symmetric group $\symm_n$ acts on the polynomial 
ring $\QQ[\xx_n] = \QQ[x_1, \dots, x_n]$ by variable permutation.  The corresponding
invariant subring
\begin{equation*}
\QQ[\xx_n]^{\symm_n} := \{ f \in \QQ[\xx_n] \,:\, \pi.f = f \text{ for all $\pi \in \symm_n$} \}
\end{equation*}
of {\em symmetric polynomials}
has algebraically independent generators 
$e_1(\xx_n), e_2(\xx_n), \dots, e_n(\xx_n),$
where $e_d(\xx_n)$ denotes the  {\em elementary symmetric function} of degree $d$, i.e.
$e_d(\xx_n) := \sum_{1 \leq i_1 < \cdots < i_d \leq n} x_{i_1} \cdots x_{i_d}.$

The {\em invariant ideal}
$I_n := \langle \QQ[\xx_n]^{\symm_n}_+ \rangle \subseteq \QQ[\xx_n]$ is generated by the 
set 
$\QQ[\xx_n]^{\symm_n}_+$
of symmetric polynomials with vanishing constant term. We have 
\begin{equation*}
I_n = \langle e_1(\xx_n), e_2(\xx_n), \dots, e_n(\xx_n) \rangle.
\end{equation*}
The {\em coinvariant algebra} $R_n$ is the polynomial ring $\QQ[x_1, \dots, x_n]$ modulo this ideal:
\begin{equation*}
R_n := \frac{\QQ[\xx_n]}{I_n} 
=  \frac{\QQ[\xx_n]}{\langle e_1(\xx_n), e_2(\xx_n), \dots, e_n(\xx_n) \rangle}.
\end{equation*}
The algebra $R_n$ inherits a graded action of the symmetric group $\symm_n$ from the
polynomial ring $\QQ[\xx_n]$.

The coinvariant algebra $R_n$ is among the most important representations in  combinatorics.
Let us recall some of its  properties, deferring various definitions to Section~\ref{Background}. 
We will use the usual $q$-analogs of numbers, factorials, and multinomial coefficients:

\begin{align*}
[n]_q := 1 + q + \cdots + q^{n-1} &   &[n]!_q := [n]_q [n-1]_q \cdots [1]_q  \\
{n \brack a_1, \dots , a_r}_q := \frac{[n]!_q}{[a_1]!_q \cdots [a_r]!_q} 
& &{n \brack a}_q := \frac{[n]!_q}{[a]!_q [n-a]!_q}.
\end{align*}

\begin{itemize}
\item  Artin 
 showed that the set of `sub-staircase' monomials
\begin{equation*}
\AAA_n := \{ x_1^{i_1} x_2^{i_2} \cdots x_n^{i_n} \,:\, 0 \leq i_j < j \}
\end{equation*}
descends to a basis of $R_n$ as a $\QQ$-vector space \cite{Artin}.  
In particular, we have $\dim(R_n) = n!$ and the
 Hilbert series $\Hilb(R_n; q)$ of $R_n$ is given by
 \begin{equation*}
 \Hilb(R_n; q) = [n]!_q.
 \end{equation*}
 \item  As an
 {\em ungraded} $\symm_n$-module $R_n$ is isomorphic  to the regular representation of $\symm_n$:
 \begin{equation*}
 R_n \cong_{\symm_n} \QQ[\symm_n].
 \end{equation*}
 In particular, the module $R_n$ gives a graded refinement of the multiplication action of permutations
 on themselves \cite{C}.
 \item  Lusztig (unpublished) and Stanley (see \cite[Prop. 4.11]{Stanley})
 described the {\em graded} isomorphism type of $R_n$ 
 in terms of the {\em major index} statistic on standard Young tableaux.
 In particular, if $\grFrob(R_n; q)$ is the graded Frobenius image of $R_n$, we have
 \begin{equation*}
 \grFrob(R_n; q) = \sum_{T \in \SYT(n)}  q^{\maj(T)} s_{\shape(T)}(\xx) = 
Q'_{(1^n)}(\xx;q),
 \end{equation*}
 where $Q'_{(1^n)}(\xx;q)$ is the dual Hall-Littlewood symmetric function.  By the Schensted 
 correspondence this is equivalent to
 \begin{equation*}
 \grFrob(R_n; q) = \sum_{\text{$w = w_1 \dots w_n$}} q^{\maj(w)} x_{w_1} \cdots x_{w_n}
 \end{equation*}
 where the sum is over all length $n$ words $w_1 \dots w_n$ over the alphabet of positive integers.
\end{itemize}

For two positive integers $k \leq n$, let $\OP_{n,k}$ denote the collection of ordered set partitions of 
$[n] := \{1, 2, \dots, n\}$ with $k$ blocks.  In particular, when $k = n$, ordered set partitions
are permutations and we may identify
$\OP_{n,n} = \symm_n$.  
We have
\begin{equation*}
|\OP_{n,k}| = k! \cdot \Stir(n,k),
\end{equation*}
where $\Stir(n,k)$ is the Stirling number of the second kind counting $k$-block set partitions of $[n]$.
The set $\OP_{n,k}$ carries a natural action of the group $\symm_n$
which reduces to the regular representation when $k = n$.

The  object of study in this paper is the following generalization $I_{n,k}$  of the classical  invariant ideal
$I_n  = 
\langle e_1(\xx_n), e_2(\xx_n), \dots, e_n(\xx_n) \rangle$.

\begin{defn}
Given two positive integers $k \leq n$, let $I_{n,k} \subseteq \QQ[\xx_n]$ be the ideal
\begin{equation}
I_{n,k} := \langle x_1^k, x_2^k, \dots, x_n^k, e_n(\xx_n), e_{n-1}(\xx_n), \dots, e_{n-k+1}(\xx_n) \rangle.
\end{equation}
Let $R_{n,k}$ be the corresponding quotient ring:
\begin{equation}
R_{n,k} := \frac{\QQ[\xx_n]}{I_{n,k}}.
\end{equation}
\end{defn}

Since the ideal $I_{n,k}$ is homogeneous, the quotient $R_{n,k}$ is a graded vector space.
Moreover, since $I_{n,k}$ is stable under the action of $\symm_n$, the algebra $R_{n,k}$ 
carries a graded action of $\symm_n$.
\footnote{It can also be shown that $e_n(\xx_n)$ lies in the ideal generated by
$x_1^k, \dots, x_n^k, e_{n-1}(\xx_n), \dots, e_{n-k+1}(\xx_n)$, so that its presence as a generator of $I_{n,k}$
is redundant.  We include $e_n(\xx_n)$ as a generator of $I_{n,k}$ because it will {\em not} be redundant 
as a generator in a more general family of ideals $I_{n,k,s}$ introduced in Section~\ref{Frobenius series}.}

When $k = n$,
it can be shown
\footnote{By \cite[Sec. 7.2]{Bergeron} we have $x_n^n \in I_n$ and $I_n$ is $\symm_n$-stable.}
 that for any $1 \leq i \leq n$, the variable power $x_i^n$ lies in the invariant ideal
$I_n$, 
so that $I_{n,n} = I_n$ and
$R_{n,n} = R_n$.  

At the other extreme, when $k = 1$ we have $I_{n,1} = \langle x_1, x_2, \dots, x_n \rangle$, so that 
$R_{n,1} = \frac{\QQ[\xx_n]}{\langle x_1, \dots, x_n \rangle} \cong \QQ$ is the trivial $\symm_n$-module in degree $0$.

We will prove that the modules $R_{n,k}$ extend many of the nice properties of the modules $R_n$,
where one replaces permutations in $\symm_n$ with ordered set partitions in $\OP_{n,k}$.
If $\xx = (x_1, x_2, \dots )$ is an infinite set of variables and $\ZZ[[\xx]]$ is the ring of formal power series in $\xx$
with integer coefficients, let $\rev_q: \ZZ[[\xx]][q] \rightarrow \ZZ[[\xx]][q]$ be the operator which 
reverses polynomials with respect to the variable $q$.  
Explicitly, if $f = a_d q^d + \cdots + a_1 q + a_0$ with $a_i \in \ZZ[[\xx]]$ and $a_d \neq 0$, then
$\rev_q(f) = a_0 q^d + \cdots + a_{d-1} q + a_d$.
For example, we have
\begin{equation*}
\rev_q(q^5 + 7q^3 - 8q) =  -8q^4 + 7q^2 + 1
\end{equation*}
and 
\begin{equation*}
\rev_q(e_3(\xx) q^2 + 2 e_2(\xx) q - 3) = -3q^2 + 2 e_2(\xx) q + e_3(\xx).
\end{equation*}

\begin{itemize}
\item  We have $\dim(R_{n,k}) = | \OP_{n,k} | = k! \cdot \Stir(n,k)$.  The Hilbert polynomial
$\Hilb(R_{n,k}; q)$ is given by
\begin{equation}
\Hilb(R_{n,k}; q) = \rev_q \left( [k]!_q \cdot  \Stir_q(n,k) \right),
\end{equation}
where $\Stir_q(n,k)$ is the $q$-Stirling number  (see Theorem~\ref{hilbert-series-theorem}).  
There is a generalization $\AAA_{n,k}$
of Artin's basis of $R_n$ to $R_{n,k}$ which witnesses
this identity (see Theorem~\ref{artin-basis-theorem}).
\item  As {\em ungraded} $\symm_n$-modules we have
\begin{equation}
R_{n,k} \cong_{\symm_n} \QQ[\OP_{n,k}],
\end{equation}
so that $R_{n,k}$ gives a graded version of the action of $\symm_n$ on ordered set partitions
(see Theorem~\ref{ungraded-frobenius-theorem}).
\item  The {\em graded} isomorphism type of $R_{n,k}$ can be described in terms of standard
Young tableaux (see Corollary~\ref{schur-expansion}).  
\begin{equation}
\grFrob(R_{n,k};q) = \sum_{T \in \SYT(n)}
q^{\maj(T)} {n - \des(T) - 1 \brack n-k}_q s_{\shape(T)}(\xx).
\end{equation}
In terms of the 
dual Hall-Littlewood basis  we have (see Theorem~\ref{hall-littlewood-expansion})
\begin{equation}
\grFrob(R_{n,k};q) = \rev_q \left[ \sum_{\substack{\lambda \vdash n \\ \ell(\lambda) = k}}
q^{\sum(i-1)(\lambda_i-1)} {k \brack m_1(\lambda), \dots , m_n(\lambda)}_q  Q'_{\lambda}(\xx;q) \right].
\end{equation}
We also have a combinatorial description of $\grFrob(R_{n,k};q)$ using  
Mahonian statistics on ordered multiset partitions.
\end{itemize}

Our results have connections to the theory of Macdonald polynomials.  
The {\em Delta Conjecture} of Haglund, Remmel, and Wilson \cite{HRW} is a generalization 
of the {\em Shuffle Conjecture} of Haglund, Haiman, Loehr, Remmel, and Ulyanov \cite{HHLRU}.
The Delta Conjecture asserts an equality of three quasisymmetric functions; 
see Section~\ref{Background} for details.
Two of these quasisymmetric functions are defined combinatorially and denoted $\Rise_{n,k}(\xx;q,t)$
and $\Val_{n,k}(\xx;q,t)$.

Using statistics on ordered multiset partitions, it follows from work of Wilson and Rhoades \cite{WMultiset, Rhoades} 
that
\begin{equation}
\Rise_{n,k}(\xx;q,0) = \Rise_{n,k}(\xx;0,q) = \Val_{n,k}(\xx;q,0) = \Val_{n,k}(\xx;0,q).
\end{equation}
Let $C_{n,k}(\xx;q)$ denote this common polynomial, which is known \cite{WMultiset, Rhoades} to be symmetric. 
We deform $C_{n,k}(\xx;q)$ somewhat by setting
\begin{equation}
D_{n,k}(\xx;q) := (\rev_q \circ \omega) C_{n,k}(\xx;q),
\end{equation}
where $\omega$ is the usual involution on symmetric functions sending $s_{\lambda}(\xx)$ to 
$s_{\lambda'}(\xx)$.
 We will prove that
\begin{equation}
\grFrob(R_{n,k}; q) = D_{n,k}(\xx;q).
\end{equation}
In other words, the module $R_{n,k}$ has graded Frobenius image equal to
 either of the combinatorial expressions in the Delta Conjecture at $t = 0$ (up to a twist).

 It is an open problem to determine a bigraded $\symm_n$-module which (even conjecturally) 
 has Frobenius image equal to any of the expressions in the Delta Conjecture.
 In the case $k = n$, the module of diagonal coinvariants plays this role.  
 Our result is the first theorem in this direction for general $k \leq n$.

Although our new module $R_{n,k}$ generalizes many of the nice combinatorial properties of 
the classical coinvariant module $R_n$, the proofs of these properties will be substantially different.
To see why, recall that a sequence of polynomials $f_1, \dots, f_n$ is a 
{\em regular sequence} in the  ring $\QQ[\xx_n]$ 
if $f_i$ is not a zero divisor in $\frac{\QQ[\xx_n]}{\langle f_1, \dots, f_{i-1}\rangle}$ for all $i$.
The regularity of $f_1, \dots, f_n$ immediately implies  the exactness
of 
\begin{equation*}
0 \rightarrow \frac{\QQ[\xx_n]}{\langle f_1, \dots, f_{i-1} \rangle} \xrightarrow{ \cdot f_i}
\frac{\QQ[\xx_n]}{\langle f_1, \dots, f_{i-1} \rangle} \twoheadrightarrow
 \frac{\QQ[\xx_n]}{\langle f_1, \dots, f_i \rangle} \rightarrow 0
\end{equation*}
for all $i$, and in turn (if the $f_i$ are homogeneous)
\begin{equation*}
\Hilb\left( \frac{\QQ[\xx_n]}{\langle f_1, \dots, f_n \rangle}; q \right) =
[\deg(f_1)]_q \cdots [\deg(f_n)]_q.
\end{equation*}
In particular, we have 
$\dim \left(\frac{\QQ[\xx_n]}{\langle f_1, \dots, f_n \rangle} \right) = \deg(f_1) \cdots \deg(f_n)$.
Moreover, the {\em Koszul complex}, a certain free resolution of the 
$\QQ[\xx_n]$-module
$\frac{\QQ[\xx_n]}{\langle f_1, \dots, f_n \rangle}$, is guaranteed to be exact in this case.

It can be shown that
the generators $e_1(\xx_n), e_2(\xx_n), \dots, e_n(\xx_n)$ of 
$I_n$ form a regular sequence.
The regularity of this sequence gives many of the properties of $R_n$ for free.  It is immediate that
$\dim(R_n) = n!$ and $\Hilb(R_n;q) = [n]!_q$.  Since the maps involved in the Koszul complex
commute with the action of $\symm_n$, it is readily derived that 
\begin{equation*}
\grFrob(R_n; q) = Q'_{(1^n)}(\xx;q).
\end{equation*}

This commutative algebra machinery breaks down in our setting.  The
ideal $I_{n,k}$ cannot be
generated by a regular sequence in $\QQ[\xx_n]$.  Indeed, the lack of a nice product
formula for the Stirling number $\Stir(n,k)$ makes it impossible to find a homogeneous regular
sequence $f_1, \dots, f_n$ in $\QQ[\xx_n]$ such that 
$\dim \left(\frac{\QQ[\xx_n]}{\langle f_1, \dots, f_n \rangle} \right) = \deg(f_1) \cdots \deg(f_n)
= |\OP_{n,k}|$.
Trying to modify the above program to determine the Hilbert or Frobenius image of $R_{n,k}$
is therefore hopeless.

To obtain the Hilbert series of $R_{n,k}$, we  use the theory of
Gr\"obner bases.  For a given monomial order $<$, any ideal $I \subseteq \QQ[x_1, \dots x_n]$
has a unique reduced Gr\"obner basis.  While the polynomials in this basis can have
unpredictable monomials and ugly coefficients, even for nicely presented ideals $I$, 
in our context a miracle occurs.  If we take $<$ to be the lexicographic
 term ordering, the reduced Gr\"obner basis for $I_{n,k}$ consists 
of the variable powers $x_1^k, x_2^k, \dots, x_n^k$ together with certain
(predictable) {\em Demazure characters} 
$\kappa_{\gamma}(x_n, x_{n-1}, \dots, x_1) \in \QQ[\xx_n]$
in a reversed variable set (see Theorem~\ref{reduced-groebner-basis-theorem}).
The polynomials $\kappa_{\gamma}$ are  characters of  indecomposable polynomial representations
of the Borel subgroup $B \subseteq GL_n(\CC)$ of upper triangular matrices; their appearance 
as Gr\"obner basis  elements of $I_{n,k}$ is mysterious to the authors.

Our Gr\"obner basis for $I_{n,k}$ generalizes known results on the reduced Gr\"obner basis of the classical invariant
ideal $I_n$.
In particular (see, for example, Sturmfels \cite[Thm. 1.2.7]{Sturmfels} or
Bergeron \cite[Sec. 7.2]{Bergeron}),
the reduced Gr\"obner basis for $I_n$ with respect to lexicographical order is 
\begin{equation*}
\{ h_i(x_i, x_{i+1}, \dots , x_n) \,:\, 1 \leq i \leq n \},
\end{equation*}
where $h_i$ is the  homogeneous symmetric function of degree $i$.
Moreover, the leading term of $h_i(x_i, x_{i+1}, \dots, x_n)$ is $x_i^i$, so that the initial ideal
of $I_n$ is generated by $x_1, x_2^2, \dots, x_n^n$.
Fomin, Gelfand, and Postnikov \cite[Prop. 12.1]{FGP} obtain a $q$-analog of this result 
for the quantum cohomology of the complete flag variety.
The Demazure characters $\kappa_{\gamma}(x_n, x_{n-1}, \dots, x_1) \in \QQ[\xx_n]$ in our 
Gr\"obner basis will reduce to the polynomials
$h_i(x_i, x_{i+1}, \dots, x_n)$ when $k = n$.

The Gr\"obner basis described above will allow us to derive our generalization of the Artin basis
of $R_n$.  
A {\em shuffle} of two sequences $(a_1, \dots, a_r)$ and $(b_1, \dots, b_s)$ is an interleaving
$(c_1, \dots, c_{r+s})$ of these two sequences which preserves the relative orders of the $a$'s and the $b$'s.
An {\em $(n,k)$-staircase} is a shuffle of the two sequences
$(0, 1, \dots, k-1)$ and $(k-1, k-1, \dots, k-1)$, where there
are $n-k$ copies of $k-1$.  For example, the $(5,3)$-staircases are
the shuffles of $(0,1,2)$ and $(2,2)$:
\begin{equation*}
(0,1,2,2,2), (0,2,1,2,2), (0,2,2,1,2), (2,0,1,2,2), (2,0,2,1,2), \text{ and } (2,2,0,1,2).
\end{equation*}
We will prove that the set of monomials
\begin{equation*}
\AAA_{n,k} = 
\{ x_1^{a_1} \cdots x_n^{a_n} \,:\, \text{$(a_1, \dots, a_n)$ is componentwise $\leq$ some $(n,k)$-staircase} \}
\end{equation*}
descends to a basis of $R_{n,k}$.
Since the only $(n,n)$-staircase is $(0,1,\dots,n-1)$, we get $\AAA_{n,n} = \AAA_n$.
Although it is not obvious at this point, the number of monomials in $\AAA_{n,k}$ is $|\OP_{n,k}|$.

The careful study of the basis $\AAA_{n,k}$ will give us our expression for the Hilbert series
of $R_{n,k}$.   We will also derive a generalization $\GS_{n,k}$ of the {\em Garsia-Stanton basis}
of $R_n$ whose combinatorics is governed by a major index-like statistic on ordered 
set partitions \cite{Garsia, GS}.
This will also turn out to give the {\em ungraded} module isomorphism
$R_{n,k} \cong \QQ[\OP_{n,k}]$.

To determine the {\em graded} Frobenius image of $R_{n,k}$, we use a recursive method 
dating back to the work of Garsia and Procesi on the graded isomorphism type of the cohomology
of Springer fibers \cite{GP}.
In particular, we apply the fact that two symmetric functions with equal constant terms
are equal if and only if their images 
under the operator $e_j(\xx)^{\perp}$ coincide for all $j \geq 1$.
On the algebraic side, this will involve a generalization $R_{n,k,s}$ of the rings
$R_{n,k}$ which satisfy $R_{n,k,k} = R_{n,k}$.

The rest of the paper is organized as follows. 
 In {\bf Section~\ref{Background}} we will present definitions related to ordered set partitions, symmetric functions,
  Demazure characters, and Gr\"obner bases.
  In {\bf Section~\ref{Some polynomial identities}}
  we will prove a variety of identities involving  polynomials and symmetric functions which will be crucial
  in the analysis of $R_{n,k}$ in the following sections.
  In {\bf Section~\ref{Hilbert series}} we will prove our formula for $\Hilb(R_{n,k};q)$ and give a
  generalization of the Artin basis to $R_{n,k}$.
  We will also describe the reduced Gr\"obner basis of $I_{n,k}$ with respect to the lexicographic monomial ordering.
  In {\bf Section~\ref{Garsia-Stanton}} we will give our extension $\GS_{n,k}$ of the Garsia-Stanton basis
  to $R_{n,k}$.
  In {\bf Section~\ref{Frobenius series}} we will derive the graded Frobenius image of $R_{n,k}$.
  We make concluding remarks in {\bf Section~\ref{Conclusion}}.

\section{Background}
\label{Background}

\subsection{Ordered Set Partitions}
Let $\pi = \pi_1 \dots \pi_n \in \symm_n$ be a permutation written in one-line notation.  The {\em descent set}
$\Des(\pi)$ and  {\em ascent set} $\Asc(\pi)$ are given by
\begin{center}
$\begin{array}{cc}
\Des(\pi) := \{1 \leq i \leq n-1 \,:\, \pi_i > \pi_{i+1}\}, &
\Asc(\pi) := \{1 \leq i \leq n-1 \,:\, \pi_i < \pi_{i+1}\}.
\end{array}$
\end{center}
We let $\des(\pi) := |\Des(\pi)|$ and $\asc(\pi) := |\Asc(\pi)|$ be the number of descents or ascents of $\pi$.
The {\em major index} of $\pi$ is $\maj(\pi) := \sum_{i \in \Des(\pi)} i$.
An {\em inversion} in $\pi$ is a pair $i < j$ with $\pi_i > \pi_j$; we let $\inv(\pi)$ be the number of inversions
in $\pi$.

An {\em ordered set partition} of size $n$ is a set partition of $[n]$ equipped with a total order on
its blocks.  For example,
\begin{equation*}
\sigma = \{2, 4\} \prec \{6\} \prec \{1, 3, 5\}
\end{equation*}
is an ordered set partition of size $6$ with $3$ blocks.  

We will write ordered set partitions in two ways.
The first
 denotes separation between
blocks with bars and writes letters in an increasing fashion within blocks, so that the above ordered
set partition may be written more succinctly as
\begin{equation*}
\sigma = (24 \mid 6 \mid 135).
\end{equation*}

We will sometimes use stars to indicate connectives relating elements in an ordered set partition in such
a way that letters are increasing within starred segments.  Our example ordered set partition can
then be expressed
\begin{equation*}
\sigma = 2_* 4 \,\, 6 \,\, 1_*3_*5.
\end{equation*}
An {\em ascent starred permutation} is a pair $(\pi, S)$ where 
$\pi \in \symm_n$ and $S \subseteq \Asc(\pi)$.  Our star notation gives an identification
\begin{equation*}
\OP_{n,k} = \{ \text{ascent starred permutations $(\pi, S)$} \,:\, \pi \in \symm_n \text{ and } |S| = n-k \}.
\end{equation*}
Our example ordered set partition becomes
\begin{equation*}
\sigma = (246135, \{1,4,5\}).
\end{equation*}

Let $\OP_{n,k}$ denote the collection of ordered set partitions of size $n$ with $k$ blocks.
We have $|\OP_{n,k}| = k! \cdot \Stir(n,k)$, where $\Stir(n,k)$ is the (signless) Stirling number
of the second kind counting $k$-block set partitions of $[n]$.

For $\sigma = (B_1 \mid \cdots \mid B_k) \in \OP_{n,k}$, an {\em inversion} in $\sigma$ 
is a pair of letters $i < j$ such that $i$ is minimal in $B_m$, $j \in B_{\ell}$, and $\ell < m$.
Let $\inv(\sigma)$ denote the number of inversions of $\sigma$; this is the usual inversion
statistic when $k = n$ and $\sigma$ is a permutation.
For example, the inversions in $(24 \mid 6 \mid 135)$ are the pairs 
$(1,2), (1,4), (1,6)$ and $\inv(24 \mid 6 \mid 135) = 3$.

It will be convenient to consider the statistic on ordered set partitions which is complementary
to the inversion statistic.  Given $\sigma \in \OP_{n,k}$, define
\begin{align}
\coinv(\sigma) &:= \max \{ \inv(\sigma') \,:\, \sigma' \in \OP_{n,k} \} - \inv(\sigma) \\
&= (n-k)(k-1) + {k \choose 2} - \inv(\sigma),
\end{align}
where the second equality follows from $\inv$ being (uniquely) maximized on
\begin{equation*}
(k (k+1) \cdots n \mid k-1 \mid \cdots \mid 2 \mid 1) \in \OP_{n,k}.
\end{equation*}

In a private communication to the authors, A. Wilson noted that for any $\sigma \in \OP_{n,k}$, $\coinv (\sigma)$ is the
numbers of pairs $(a,b)$, $1\le a<b \le n$, such that:
\begin{equation}
\label{coinvdef}
\begin{cases}
\text{at least one of $a$ and $b$ is minimal in its block in $\sigma$,} \\
\text{$a$ and $b$ are in different blocks, and} \\
\text{if $a$'s block is to the right of $b$'s block, then
 only $b$ is minimal in its block.}
 \end{cases}
\end{equation}
For example, in the ordered set partition $(45\mid 167\mid 23)$ the pairs that satisfy the above condition are
$12$, $13$, $34$, $46$, and $47$.
We leave it as an exercise for the interested reader to verify that this description of $\coinv$ is equivalent to 
our definition $\coinv (\sigma) = (n-k)(k-1)+{k \choose 2} - \inv (\sigma)$.

Given $\sigma = (\pi,S) \in \OP_{n,k}$ represented as an ascent-starred permutation,
we define the {\em major index} $\maj(\sigma)$ as follows.  For $1 \leq i \leq n$, let 
$i^c := (n-i+1)$ and let $\pi^c \in \symm_n$ be the permutation 
$\pi^c = \pi_1^c \dots \pi_n^c$.  Define
\begin{equation}
\maj(\sigma) := \maj(\pi^c) - \sum_{i \in S} |\Asc(\pi) \cap \{i, i+1, \dots , n-1\}|.
\end{equation}
An equivalent version of this major index statistic appears in \cite[p. 12]{RW}.
For example, we have
\begin{equation*}
\maj(2_* 4 \,\, 6 \,\, 1_*3_*5) = \maj(531642) - (1+2+4) = (1+2+4+5)-(1+2+4) = 5.
\end{equation*}

Just as in the case of $\inv$, we will consider the complementary statistic to $\maj$ on 
$\OP_{n,k}$.  Given $\sigma \in \OP_{n,k}$, define
\begin{align}
\comaj(\sigma) &:= \max \{ \maj(\sigma') \,:\, \sigma' \in \OP_{n,k} \} - \maj(\sigma) \\
&= (n-k)(k-1) + {k \choose 2} - \maj(\sigma),
\end{align}
where the second equality comes from the fact that $\maj$ is (uniquely) maximized 
on
\begin{equation*}
1 \,\, 2 \,\, \dots \,\, (k-1) \,\, k_*(k+1)_* \dots  (n-1)_*n \in \OP_{n,k}.
\end{equation*}

The {\em $q$-Stirling numbers} $\Stir_q(n,k)$ are defined by the recursion
\begin{equation}
\Stir_q(n,k) = \Stir_q(n-1,k-1) + [k]_q \cdot \Stir_q(n-1,k)
\end{equation}
and the initial condition 
$\Stir_q(1,k) = \begin{cases} 1 & k = 1 \\ 0 & k > 1. \end{cases}$
Steingr\'imsson \cite{Stein} and Remmel-Wilson \cite{RW} proved
that the product
$[k]!_q \cdot \Stir_q(n,k)$ is the generating function of $\inv$ and $\maj$ on $\OP_{n,k}$:
\begin{equation}
\sum_{\sigma \in \OP_{n,k}} q^{\inv(\sigma)} = \sum_{\sigma \in \OP_{n,k}} q^{\maj(\sigma)}
= [k]!_q \cdot \Stir_q(n,k).
\end{equation}
Any statistic on $\OP_{n,k}$ which shares this distribution is called {\em Mahonian}.
Reversing in $q$, we get
\begin{equation}
\sum_{\sigma \in \OP_{n,k}} q^{\coinv(\sigma)} = \sum_{\sigma \in \OP_{n,k}} q^{\comaj(\sigma)}
= \rev_q([k]!_q \cdot \Stir_q(n,k)).
\end{equation}

Recall that a {\em (weak) composition} $\gamma = (\gamma_1, \dots, \gamma_n)$ is a sequence 
of nonnegative integers.  We write $|\gamma| := \gamma_1 + \cdots + \gamma_n$,
write $\gamma \models |\gamma|$, and say that 
$\gamma$ has $n$ parts, or $\ell(\gamma) = n$.
We let $\gamma^* = (\gamma_n, \dots, \gamma_1)$ be the reverse of $\gamma$.

An {\em ordered multiset partition} is  a finite sequence $\mu = (M_1 \mid \cdots \mid M_k)$ of nonempty
finite
sets of positive integers.  We say that $\mu$ has {\em size} $|M_1| + \cdots + |M_k|$ and {\em $k$ blocks}.
For example, 
\begin{equation*}
\mu = (247 \mid 15 \mid 145)
\end{equation*}
is an ordered multiset partition of size $8$ with $3$ blocks.  Note that the elements in an ordered multiset 
partition are {\em sets}; we do not allow repeated letters within blocks.  

The {\em content} of an ordered 
multiset partition $\mu$ is the composition $\cont(\mu) = (\cont(\mu)_1, \cont(\mu)_2, \dots)$, where
$\cont(\mu)_i$ the multiplicity of $i$ as a letter in $\mu$.  If $\mu$ is the ordered multiset partition
above, we have $\cont(\mu) = (2,1,0,2,2,0,1)$.  For any composition $\gamma$, let
$\OP_{\gamma,k}$ be the collection of ordered multiset partitions of content $\gamma$ with $k$ blocks.
When $\gamma = (1^n)$, we recover the notion of an ordered set partition.

The definition of
 $\inv$ on ordered set partitions extends verbatim to ordered multiset partitions.
 There is also an extension of the $\maj$ statistic to ordered multiset partitions
 (which are viewed in this context as {\em descent} starred words); see \cite{RW, WMultiset}.
 Remmel-Wilson and Wilson defined two other statistics on ordered multiset
 partitions called $\dinv$ and $\minimaj$.  
 We will not use the statistics $\maj, \dinv,$ and $\minimaj$
 on ordered multiset partitions explicitly in our work; see \cite{RW, WMultiset} for their definitions.

\subsection{Symmetric functions}  
Our notation for symmetric functions is standard; see  \cite{Macdonald}.

A {\em partition} $\lambda$ of $n$ is a weakly decreasing sequence 
$\lambda = (\lambda_1 \geq \cdots \geq \lambda_k)$ of positive integers with
$\sum \lambda_i = n$.  We write $\lambda \vdash n$ to mean that $\lambda$ is a partition
of $n$ and $\ell(\lambda) = k$ for the number of parts of $\lambda$.  
We  denote the multiplicity of $i$ as a part of $\lambda$ by $m_i(\lambda)$.
Given two partitions $\lambda, \mu$ we say $\lambda \leq \mu$ in {\em dominance order}
if $\lambda_1 + \cdots  + \lambda_i \leq \mu_1 + \cdots + \mu_i$ for all $i$.

Given $\lambda \vdash n$, the (English) {\em Ferrers diagram} of $\lambda$ 
consists of $\lambda_i$ left-justified boxes in row $i$, for all $i$.  The Ferrers diagram of 
$(4,2,2) \vdash 8$ is shown below.
\begin{small}
\begin{center}
\begin{Young}
 & & & \cr
  & \cr
   & \cr 
\end{Young}
\end{center}
\end{small}
The {\em transpose} $\lambda'$ of $\lambda$ is the partition of $n$ whose Ferrers diagram
is obtained from the Ferrers diagram of $\lambda$ by reflecting across the line
$y = x$.  For example, we have $(4,2,2)' = (3,3,1,1)$.

For $\lambda \vdash n$, a {\em standard Young tableau} of shape $\lambda$ is a filling of the 
boxes of $\lambda$ with $1, 2, \dots, n$ which is increasing down columns and across rows.  
One possible standard Young tableau of shape $(4,2,2)$ is
\begin{small}
\begin{center}
\begin{Young}
1 & 2 & 3 & 7 \cr
4  & 6 \cr
 5  & 8 \cr 
\end{Young}
\end{center}
\end{small}

Let $\SYT(n)$ denote the set of standard Young tableaux with $n$ boxes.  For $T \in \SYT(n)$,
let $\shape(T) \vdash n$ denote the shape of $T$.  Given $T \in \SYT(n)$, a {\em descent}
of $T$ is a letter $i$ which appears in a higher row than $i+1$ in $T$.  
Let $\des(T)$ denote the number of descents of $T$ and $\maj(T)$ denote the sum of the descents
of $T$.
For example, the descents
in the tableau $T$ above are $3,4,$ and $7$, so that $\des(T) = 3$ and 
$\maj(T) = 3 + 4 + 7 = 14$.

Let $\xx = (x_1, x_2, \dots )$ be an infinite set of variables
and let $\Lambda$ denote the ring of symmetric functions in $\xx$ with coefficients in
the field $\QQ(q,t)$.  
The ring $\Lambda = \bigoplus_{n \geq 0} \Lambda_n$ is graded by degree and the dimension
of the graded piece $\Lambda_n$ equals the number of partitions of $n$.

For $\lambda \vdash n$, let 
\begin{center}
$\begin{array}{ccccc}
m_{\lambda}(\xx), &
e_{\lambda}(\xx), & h_{\lambda}(\xx), & s_{\lambda}(\xx), &  \widetilde{H}_{\lambda}(\xx;q,t)
\end{array}$
\end{center}
be the corresponding {\em monomial},
{\em elementary},
{\em (complete) homogeneous},
{\em Schur},
and {\em modified Macdonald symmetric functions}.
Here we are using $\xx$ to refer to an infinite set of variables to avoid confusion with the 
finite variable set $\xx_n$.  
Let 
$\omega$ be the algebra involution on $\Lambda$ defined by $\omega(s_{\lambda}(\xx)) = s_{\lambda'}(\xx)$.

The {\em Hall-Littlewood polynomials} $P_{\lambda}(\xx; q)$ are 
the basis of $\Lambda$ defined implicitly in terms of the Schur basis by the formula
\begin{equation*}
s_{\lambda}(\xx) = \sum_{\mu} K_{\lambda, \mu}(q) P_{\mu}(\xx;q),
\end{equation*}
where $K_{\lambda,\mu}(q)$ is the {\em Kostka-Foulkes polynomial} (i.e., the generating function of charge
on semistandard tableaux of shape $\lambda$ and content $\mu$).  The {\em dual Hall-Littlewood polynomials}
$Q'_{\lambda}(\xx; q)$ are defined by
\begin{equation*}
Q'_{\mu}(\xx;q) = \sum_{\mu} K_{\lambda,\mu}(q) s_{\lambda}(\xx).
\end{equation*}
In particular, the polynomials $Q'_{\mu}(\xx;q)$ are Schur positive.

Let $\langle \cdot, \cdot \rangle$ be the {\em Hall inner product} on $\Lambda$ with respect to which the 
Schur basis is orthogonal.  
For any symmetric function $F(\xx) \in \Lambda$, let 
$F(\xx)^{\perp}$ denote the linear
 operator on $\Lambda$ which is adjoint to multiplication by $F(\xx)$ with 
respect to the Hall inner product.
In other words, we have
\begin{equation}
\langle F(\xx)^{\perp} G(\xx), H(\xx) \rangle = \langle G(\xx), F(\xx)H(\xx) \rangle
\end{equation}
for all symmetric functions $G(\xx), H(\xx) \in \Lambda$.

The irreducible representations of the symmetric group $\symm_n$ are naturally labeled by 
partitions of $n$.  Given a partition $\lambda \vdash n$, 
let $S^{\lambda}$ be the corresponding irreducible
representation of $\symm_n$.

Let $V$ be a finite-dimensional $\symm_n$-module. Then $V$ is completely
decomposable and  we may write 
\begin{equation*}
V \cong \bigoplus_{\lambda \vdash n} (S^{\lambda})^{\oplus c_{\lambda}}
\end{equation*}
as a direct sum of irreducible representations for some nonnegative integers $c_{\lambda}$.  
The {\em Frobenius character}
$\Frob(V) \in \Lambda_n$ is the symmetric function
\begin{equation}
\Frob(V) := \sum_{\lambda \vdash n} c_{\lambda} s_{\lambda}(\xx).
\end{equation}

For example, let $\lambda \vdash n$ and consider the {\em Young subgroup}
\begin{equation*}
\symm_{\lambda} := \symm_{\lambda_1} \times \symm_{\lambda_2} \times \cdots
\end{equation*}
of the symmetric group $\symm_n$.  The corresponding left coset representation 
$\Ind_{\symm_{\lambda}}^{\symm_n}(\triv) = \QQ[\symm_n/\symm_{\lambda}]$ of $\symm_n$ 
has Frobenius image
$\Frob:  \QQ[\symm_n/\symm_{\lambda}]  \mapsto h_{\lambda}(\xx)$.

Let $V = \bigoplus_{d \geq 0} V_d$ be a graded vector space in which each graded piece
$V_d$ is finite-dimensional.  The {\em Hilbert series} of $V$ is the power series in $q$ given by
\begin{equation}
\Hilb(V;q) := \sum_{d \geq 0} \dim(V_d) q^d.
\end{equation}
If $V$ carries a graded action of $\symm_n$, the {\em graded Frobenius character} is 
\begin{equation}
\grFrob(V;q) := \sum_{d \geq 0} \Frob(V_d) q^d.
\end{equation}

\subsection{The Delta Conjecture}

Our results are related to the {\em Delta Conjecture} arising in the theory of Macdonald polynomials.
Let us briefly review this conjecture of Haglund, Remmel, and Wilson \cite{HRW}.
To state the Delta Conjecture, we will need some definitions.

For any symmetric function $F(\xx)$, we define the linear transformation
\begin{equation}
\Delta'_F: \Lambda_n \rightarrow \Lambda_n
\end{equation}
to be the Macdonald eigenoperator 
$\Delta'_F: \widetilde{H}_{\mu}(\xx;q,t) \mapsto F[B_{\mu}(q,t) - 1] \cdot \widetilde{H}_{\mu}(\xx;q,t)$.
Here we are using the plethystic notation
\begin{equation}
F[B_{\mu}(q,t) - 1] = F( \dots , q^{(i-1)} t^{(j-1)} , \dots ),
\end{equation}
where $(i,j)$ range over all matrix coordinates $\neq (1,1)$ of cells in the 
Ferrers diagram of $\mu$ (and all other variables in $F(\xx)$ are set to $0$).

For example, if $\mu = (3,2) \vdash 5$, we have
\begin{equation*}
\Delta'_F: \widetilde{H}_{\mu}(\xx;q,t) \mapsto F(q,q^2,t,qt) \widetilde{H}_{\mu}(\xx;q,t),
\end{equation*}
corresponding to the filling
\begin{center}
\begin{Young}
$\cdot$ & $q$ & $q^2$ \cr
$t$ & $qt$ 
\end{Young}.
\end{center}

A {\em Dyck path} of size $n$ is a lattice path from $(0,0)$ to $(n,n)$ consisting of north and east steps
which stays weakly above $y = x$.    We number the rows of any Dyck path $D$ as $1, 2, \dots, n$ from bottom 
to top.
A {\em labeled Dyck path} is a Dyck path with (not necessarily unique) positive integers assigned to its north steps
such that these labels strictly increase going up columns.
We let $\DDD_n$ denote the set of Dyck paths of size $n$ and 
$\LD_n$ denote the collection of labeled Dyck paths of size $n$.
Given $P \in \LD_n$, let $\xx^P \in \QQ[[\xx]]$ be the monomial
in which the power of $x_i$ is the multiplicity of $i$ as a label of $P$.  Also let $\ell_i(P)$ denote the label
of $P$ in row $i$.

For $D \in \DDD_n$ and $1 \leq i \leq n$, let $a_i(D)$ be the number of full squares between $D$ and the 
line $y = x$ in row $i$.  Let $\area(D) := a_1(D) + \cdots + a_n(D)$ be the {\em area} of $D$.  If $P \in \LD_n$
is a labeled Dyck path and $D(P) \in \DDD_n$ is its underlying Dyck path, we set
$a_i(P) := a_i(D(P))$ and $\area(P) := \area(D(P))$.

Let $P \in \LD_n$ be a labeled Dyck path and $1 \leq i \leq n$.  Define $d_i(P)$ by 
\begin{align*}
d_i(P) := &|\{i < j \leq n \,:\, a_i(P) = a_j(P), \ell_i(P) < \ell_j(P)\}|  \\
&+ |\{i < j \leq n \,:\, a_i(P) = a_j(P) + 1, \ell_i(P) > \ell_j(P)\}|.
\end{align*}
The dinv statistic is  $\dinv(P) := d_i(P) + \cdots + d_n(P)$.
The {\em contractible valleys} of $P$ are
\begin{align*}
\Val(P) := &\{2 \leq i \leq n \,:\, a_i(P) < a_{i-1}(P)\} \\ &\cup \{2 \leq i \leq n \,:\, a_i(P) = a_{i-1}(P), \ell_i(P) > \ell_{i-1}(P)\}.
\end{align*}

As an example of these concepts, let $P \in \LD_5$ be the labeled Dyck path shown below.
We have $\area(P) = 2, \dinv(P) = 4, \Val(P) = \{4,5\},$ and $\xx^P = x_1 x_2^2 x_3 x_6$.

\begin{center}
\begin{tikzpicture}[scale=0.6]
(8,0) rectangle +(5,5);
\draw[help lines] (8,0) grid +(5,5);
\draw[dashed] (8,0) -- +(5,5);
\coordinate (prev) at (8,0);
\draw [color=black, line width=2] (8,0)--(8,2)--(9,2)--(9,3)--(11,3)--(11,4)--(12,4)--(12,5)--(13,5);

\draw (8,0) node [scale=0.5, circle, draw,fill=black]{};
\draw (13,5) node [scale=0.5, circle, draw,fill=black]{};

\node[align=left,scale=1.4] at (8.5,0.5) {3};
\node[align=left,scale=1.4] at (8.5,1.5) {6};
\node[align=left,scale=1.4] at (9.5,2.5) {2};
\node[align=left,scale=1.4] at (11.5,3.5) {1};
\node[align=left,scale=1.4] at (12.5,4.5) {2};

\node[align=left,scale=1.4] at (14.5,5.5) {$i$};
\node[align=left,scale=1.4] at (14.5,4.5) {$5$};
\node[align=left,scale=1.4] at (14.5,3.5) {$4$};
\node[align=left,scale=1.4] at (14.5,2.5) {$3$};
\node[align=left,scale=1.4] at (14.5,1.5) {$2$};
\node[align=left,scale=1.4] at (14.5,0.5) {$1$};

\node[align=left,scale=1.4] at (16,5.4) {$a_i$};
\node[align=left,scale=1.4] at (16,4.5) {$0$};
\node[align=left,scale=1.4] at (16,3.5) {$0$};
\node[align=left,scale=1.4] at (16,2.5) {$1$};
\node[align=left,scale=1.4] at (16,1.5) {$1$};
\node[align=left,scale=1.4] at (16,0.5) {$0$};

\node[align=left,scale=1.4] at (17.5,5.5) {$d_i$};
\node[align=left,scale=1.4] at (17.5,4.5) {$0$};
\node[align=left,scale=1.4] at (17.5,3.5) {$1$};
\node[align=left,scale=1.4] at (17.5,2.5) {$1$};
\node[align=left,scale=1.4] at (17.5,1.5) {$2$};
\node[align=left,scale=1.4] at (17.5,0.5) {$0$};
 \end{tikzpicture}
\end{center}

\begin{conjecture} \cite{HRW}
{\em (The Delta Conjecture)}
For positive integers $k \leq n$,
\begin{align}
\label{riseform}
\Delta'_{e_{k-1}} e_n(\xx) 
&= \{z^{n-k}\} \left[ \sum_{P \in \mathcal{LD}_n} q^{\mathrm{dinv}(P)} t^{\mathrm{area}(P)}
\prod_{i: a_i(P) > a_{i-1}(P)} \left( 1 + z/t^{a_i(P)} \right) \xx^P \right] \\
&= \{z^{n-k}\} \left[ \sum_{P \in \mathcal{LD}_n} q^{\mathrm{dinv}(P)} t^{\mathrm{area}(P)}
\prod_{i \in \mathrm{Val}(P)} \left( 1 + z/q^{d_i(P) + 1} \right) \xx^P \right].
\end{align}
Here the operator $\{z^{n-k}\}$ extracts the coefficient of $z^{n-k}$.
\end{conjecture}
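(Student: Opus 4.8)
The plan is to attack the full two-variable statement in three stages: first establish the equality of the two combinatorial sides (the $\Rise$ and $\Val$ generating functions), then isolate a recursive/plethystic characterization of their common value, and finally show that the generating function $\sum_{k=1}^{n} z^{n-k}\,\Delta'_{e_{k-1}} e_n(\xx)$ satisfies the same characterization. This mirrors the architecture by which the Shuffle Conjecture (the specialization $z = 0$, i.e.\ $k = n$) was eventually proved: one passes to a \emph{compositional} refinement, in which the right-hand side is cut up according to where the Dyck path returns to the diagonal $y = x$, and proves the refined identity by an induction that is invisible at the coarse level. So the first concrete task is to formulate a compositional Delta Conjecture --- attaching to each composition $\alpha \models n$ a piece of the right-hand side (labelled Dyck paths with prescribed returns) together with a symmetric function assembled from $\Delta'_{e_{k-1}}$ and Hall-algebra operators indexed by $\alpha$ --- and to verify it reduces to the stated identity after summing over $\alpha$.

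On the symmetric-function side I would rewrite $\Delta'_{e_{k-1}} e_n$ using Macdonald eigenoperator identities (expressing it through $\nabla$ and operators of the form $\Theta_{e_j}$, for which commutation relations with the multiplication and skewing operators $e_j$, $e_j^{\perp}$ are available), then apply $e_j^{\perp}$ to $\sum_k z^{n-k}\,\Delta'_{e_{k-1}} e_n$. Using the branching of the $\widetilde H_\mu(\xx;q,t)$ under removal of a cell, one expects this to yield a recursion expressing the degree-$n$ generating function in terms of degree-$(<n)$ ones with shifted $z$-weights. Since two symmetric functions with equal constant term coincide iff all their $e_j^{\perp}$-images coincide --- the Garsia--Procesi principle already invoked for $R_{n,k}$ in this paper --- such a recursion, once matched on the combinatorial side, would pin the symmetric-function side down uniquely.

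The matching recursion on the combinatorial side I would produce by the standard surgery on labelled Dyck paths: delete the bottom row, or contract the first return to the diagonal, and track how $\dinv$, $\area$, the label monomial $\xx^P$, and --- crucially --- the $z$-bearing factors $\prod_{i:\,a_i(P) > a_{i-1}(P)}\bigl(1 + z/t^{a_i(P)}\bigr)$ transform under the surgery. When a full diagonal square disappears the arms $a_i$ all drop by one, so the $t$-powers inside the $z$-factors shift predictably; the genuine combinatorial content is the bookkeeping of \emph{which} rises survive. In parallel, the $\Rise = \Val$ equality should follow either from a $z$-weighted refinement of the sweep map exchanging $(\dinv,\area)$-data with $(\area,\mathrm{bounce})$-data and interchanging rises with contractible valleys, or from a direct sign-reversing involution on the difference of the two sides; the already-known agreement at $q = 0$, at $t = 0$, and in the Schr\"oder (hook) case provides consistency checks.

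The main obstacle, and the reason the statement remains open, is that there is no evident analogue inside the Dyck-path (elliptic Hall) algebra of the extra parameter $z$: the operator $\Delta'_{e_{k-1}}$ is not manifestly built from the raising and lowering operators that generate that algebra, so the clean representation-theoretic identity driving the Shuffle Theorem proof does not transfer verbatim, and a genuinely new algebraic input is needed to carry the $z$-deformation through. A complementary route, more in the spirit of the present paper, would be to bypass the combinatorics and construct a bigraded $\symm_n$-module $V_{n,k}$ refining $R_{n,k}$ --- which is precisely the $t = 0$ slice --- equipped with a second filtration whose associated graded recovers the $t$-grading; proving $\grFrob(V_{n,k};q,t) = \Delta'_{e_{k-1}} e_n$ would then be a Hilbert- and Frobenius-series computation generalizing the ones carried out here for $R_{n,k}$. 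Either way, establishing the statement as worded lies beyond the techniques of this paper, which settles only the $t = 0$ specialization.
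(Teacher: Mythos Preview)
The statement you were asked to prove is labeled a \emph{conjecture} in the paper, cited from \cite{HRW}, and the paper does not prove it; indeed the abstract and introduction explicitly say that the paper establishes only the $t=0$ specialization of the combinatorial side and that the full two-variable identity remains open. So there is no ``paper's own proof'' to compare against. You correctly recognized this in your final paragraph, and your proposal is best read not as a proof but as an outline of what a proof might eventually look like.

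That said, a few remarks on the outline itself. Your architecture (compositional refinement, $e_j^\perp$-recursion, matching via Dyck-path surgery) is modeled on the Carlsson--Mellit proof of the Shuffle Theorem, and you are right that the central obstruction is the lack of an elliptic-Hall-algebra interpretation of the extra parameter $z$ or of $\Delta'_{e_{k-1}}$; this is exactly why the conjecture was open at the time of the paper. Your suggestion that $\Rise = \Val$ might follow from a $z$-weighted sweep map is reasonable but also open: the paper only records (via \cite{WMultiset,Rhoades}) that the four specializations $\Rise_{n,k}(\xx;q,0)$, $\Rise_{n,k}(\xx;0,q)$, $\Val_{n,k}(\xx;q,0)$, $\Val_{n,k}(\xx;0,q)$ agree, not that $\Rise_{n,k}(\xx;q,t) = \Val_{n,k}(\xx;q,t)$ in general. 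Your alternate route via a bigraded module $V_{n,k}$ is precisely Problem~\ref{general-delta-conjecture-problem} in the paper's concluding section. In short: your assessment that the statement lies beyond the paper's techniques is accurate, and nothing you wrote constitutes a gap so much as an honest acknowledgment that no complete argument exists.
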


When $k = n$, we have $\Delta'_{e_{n-1}} e_n(\xx) = \nabla e_n(\xx)$, 
where $\nabla$ is the Bergeron-Garsia
Macdonald eigenoperator, and the Delta Conjecture reduces to the Shuffle Conjecture of 
Haglund, Haiman, Loehr, Remmel, and Ulyanov \cite{HHLRU}.
Haiman proved that $\nabla e_n(\xx)$ is the bigraded Frobenius image of the diagonal coinvariant algebra \cite{Haiman}.
If we set $t = 0$, we get that $\nabla e_n(\xx) |_{t = 0}$ is the graded Frobenius image of the 
classical coinvariant algebra $R_n$:
$\grFrob(R_n; q) = \nabla e_n(\xx) |_{t = 0}.$

For general $k \leq n$,
there is not even a conjectural bigraded $\symm_n$-module whose bigraded Frobenius image 
equals any of the expressions in the Delta Conjecture.  We will prove that (up to minor modification),
our rings $R_{n,k}$ provide such a module in the specialization $t = 0$ of either combinatorial 
expression in the Delta Conjecture.

Let $\Rise_{n,k}(\xx;q,t)$ and $\Val_{n,k}(\xx;q,t)$ denote the middle and right
sides of the Delta Conjecture, respectively.
\footnote{Our conventions are `off by one' from those in \cite{HRW} and elsewhere -- our 
$\Rise_{n,k}(\xx;q,t)$ is their $\Rise_{n,k-1}(\xx;q,t)$, etc.}
By the work of Remmel-Wilson, Wilson, and Rhoades \cite{RW, WMultiset, Rhoades} we have 
\begin{equation*}
\Rise_{n,k}(\xx;q,0) = \Rise_{n,k}(\xx;0,q) = \Val_{n,k}(\xx;q,0) = \Val_{n,k}(\xx;0,q).
\end{equation*}
Let $C_{n,k}(\xx;q)$ denote this common symmetric function 
\footnote{While this paper was under review, Garsia, Haglund, Remmel, and Yoo \cite{GHRY}
proved that we have the additional equalities
$\Delta'_{e_{k-1}} e_n(\xx)|_{t = 0} = \Delta'_{e_{k-1}} e_n(\xx)|_{q = 0, t = q} = 
C_{n,k}(\xx;q)$, so we can also define $C_{n,k}(\xx;q)$ in terms of the operator
$\Delta'_{e_{k-1}}$.}
and set 
\begin{equation}
D_{n,k}(\xx;q) := \rev_q \circ \omega [C_{n,k}(\xx;q)].
\end{equation}

The symmetric function $C_{n,k}(\xx;q)$ is related to ordered multiset partitions as follows.  If $\gamma$ is 
any composition, let $\xx^{\gamma} = x_1^{\gamma_1} x_2^{\gamma_2} \cdots$.  
Haglund, Remmel, and Wilson proved the following formulas \cite[Prop. 4.1]{HRW}:
\begin{equation}
\label{fourways}
\begin{split}
\Rise_{n,k}(\xx;q,0) = \sum_{|\gamma|=n} \sum_{\mu \in \OP_{\gamma,k}} q^{\dinv(\mu)} \xx^{\gamma} \hspace{0.2in}
\Rise_{n,k}(\xx;0,q) = \sum_{|\gamma|=n} \sum_{\mu \in \OP_{\gamma,k}} q^{\maj(\mu)} \xx^{\gamma} \\ 
\Val_{n,k}(\xx;q,0) = \sum_{|\gamma|=n} \sum_{\mu \in \OP_{\gamma,k}} q^{\inv(\mu)} \xx^{\gamma} \hspace{0.2in}
\Val_{n,k}(\xx;0,q) = \sum_{|\gamma|=n} \sum_{\mu \in \OP_{\gamma,k}} q^{\minimaj(\mu)} \xx^{\gamma}.
\end{split}
\end{equation}

By the {\em standardization} 
of a word $w$ of $n$ positive integers, we mean the unique permutation $\pi \in \symm_n$ which satisfies
$w_i < w_j$ iff $\pi _i <\pi _j$.  For example, the standardization of $131$ is $132$.
For $\pi \in \symm_n$, let $\iDes (\pi)$ denote the descent set of $\pi ^{-1}$.

For $\mu$ an ordered  multiset partition, let  the {\em reading word}
$\text{rword}(\mu)$ be the word obtained 
from $\mu$ by reading along ``diagonals"
(the $m$th diagonal consists of all elements which are the $m$th largest in their block, left to right), larger $m$ first.
For example, if
$\mu = (247\mid 1\mid 35\mid 3)$, then $\text{rword}(\mu) = 7452133$.  
This choice of a reading word guarantees that if $\mu$ is an ordered multiset partition, and the standardization of 
$\text{rword}(\mu)$ equals  $\text{rword}(\sigma)$ for some $\sigma \in \OP_{n,k}$, then
a pair of elements in $\mu$ form an inversion pair iff the corresponding pair in $\sigma$ do as well.  
For example, $\text{rword}(247\mid 26\mid 6)=746226$,
which standardizes to the permutation $634125$, and this is the reading word of 
$(136 \mid  24 \mid 5)$.  The inversion pairs of $(136  \mid 24 \mid 5)$ are $(5,6),(2,6)$ and $(2,3)$,  while those of $(247\mid 26\mid 6)$  are 
the second $6$ and the $7$, the second $2$ and the $7$, and the second $2$ and the $4$.

Let $F_{n,D}(\xx)$ denote the Gessel fundamental quasisymmetric function corresponding to the
descent set $D \subseteq \{1,2,\ldots ,n-1\}$.  One way of defining $F_{n,D}(\xx)$ is the sum of the $\xx$-weights of all words
which standardize to a permutation $\pi$ with $\iDes (\pi)=D$.
See \cite[pp. 99-101]{Hagbook} for background on standardization and Gessel fundamental quasisymmetric functions (there denoted 
$Q_{n,D}(\xx)$).
Thus an equivalent way of writing the third expression for $C_{n,k}(\xx ,q)$ from (\ref{fourways}) is
\begin{align}
\label{ourway}
\Val_{n,k}(\xx;q,0) = \sum_{\sigma \in \OP_{n,k}} q^{\inv(\sigma)}  F_{n,\iDes (\text{rword}(\sigma))}(\xx).
\end{align}

Let $G(\xx)$ be a symmetric function expressable in the form
\begin{align}
\label{rev1}
G(\xx) = \sum_{\pi \in \symm_n} c(\pi) F_{n,\iDes (\pi)}(\xx),
\end{align}
where the sum is over the symmetric group and the $c(\pi)$ are independent of $\xx$. Then
\begin{align}
\label{reverse2}
\omega G(\xx) = \sum_{\pi \in \symm_n} c(\pi) F_{n,\iDes ( \text{revword}( \pi) )}(\xx),
\end{align}
where $\text{revword}(\sigma)$ is the word obtained by reversing $\text{rword}(\sigma)$.   This follows easily if $G$ is a Schur function, using the
well-known decomposition of a Schur function into Gessel fundamentals \cite{Stanley}, and hence for general $G$ since the Schur
functions form a basis for the ring of symmetric functions.  Combining this with (\ref{ourway}) gives
\begin{align}
\label{new}
D_{n,k}(\xx;q) =  
\sum_{\sigma \in \OP_{n,k}} q^{\coinv (\sigma)} F_{n,\iDes (\text{revword}(\sigma))}(\xx).
\end{align}

\subsection{Demazure characters} 
To any  composition 
$\gamma = (\gamma_1, \dots, \gamma_n)$
we have a {\em (augmented) skyline diagram} consisting of 
columns of heights $\gamma_1, \dots, \gamma_n$ augmented with the basement which 
reads, from left to right, $n, n-1, \dots, 1$.
For example, the skyline diagram of $(3,0,1,3)$ is shown below.
\begin{small}
\begin{center}
\begin{Young}
 &, & ,& \cr 
 &, & ,& \cr
 &, & & \cr
 ,4 & ,3 & ,2 & ,1 \cr
\end{Young}
\end{center}
\end{small}

Let $\gamma = (\gamma_1, \dots, \gamma_n)$ be a  composition and
let $1\leq i < j \leq n$ 
index a pair
of columns of $\gamma$.  A {\em type A triple} is a set of three cells $a, b, c$ of the form
$(i,k), (j,k), (i,k-1)$ of the skyline diagram such that $\gamma_i \geq \gamma_j$.  
A {\em type B triple} is a set of three cells $a, b, c$ of the form
$(j,k+1), (i,k), (j,k)$ where $\gamma_i < \gamma_j$.  
These two situations are shown schematically
below.  
(Note that basement cells are allowed to be members of triples.)
\begin{small}
\begin{center}
\begin{Young}
$a$ &,& ,\dots &,& $b$ \cr
$c$ &, &,  \cr  ,\cr
, type A ($\gamma_i \geq \gamma_j$)
\end{Young}
\hspace{1in}
\begin{Young}
, &,& , &, & $a$ \cr
$b$ &, &, \dots &, & $c$  \cr  ,\cr
, type B ($\gamma_i < \gamma_j$)
\end{Young}
\end{center} 
\end{small}
If $a, b, c$ are positive integers coming from a filling, 
the corresponding triple is called a {\em coinversion triple}
if $a \leq b \leq c$.  Otherwise the triple is called an {\em inversion triple}.

Let $\gamma$ be a  composition with $\ell(\gamma) = n$.  A 
{\em semistandard skyline filling} (SSK) of shape $\gamma$ 
is a filling of the skyline diagram of 
$\gamma$ with positive integers such that
\begin{enumerate}
\item  the entries decrease weakly up each column (including the basement) and
\item  every triple (including those involving basement cells) is an inversion triple.
\end{enumerate}
An example of a SSK of shape $(3,0,1,3)$ is shown below.
\begin{small}
\begin{center}
\begin{Young}
3 &, & ,& 1 \cr 
 3 &, & ,& 1 \cr
 4 &, & 2 & 1 \cr
 ,4 & ,3 & ,2 & ,1 \cr
\end{Young}
\end{center}
\end{small}
Let $\SSK(\gamma)$ be the set of SSK of shape $\gamma$.

Let $\gamma$ be a  composition with $\ell(\gamma) = n$.  The {\em Demazure character}
is the polynomial
$\kappa_{\gamma}(\xx_n) \in \QQ[\xx_n]$ given by
\begin{equation}
\kappa_{\gamma}(\xx_n) = \sum_{T \in \SSK(\gamma^*)} 
x_1^{\#\text{ of 1s in $T$}} x_2^{\# \text{ of 2s in $T$}} \cdots.
\end{equation}
Note that the Demazure character labeled by $\gamma$ is the generating function of SSK
of shape given by the {\em reverse} composition $\gamma^*$.
For example, the  SSK shown above contributes $x_1^3 x_2 x_3^2 x_4$ to 
$\kappa_{(3,1,0,3)}(\xx_4)$.
While the original definition of the Demazure character was not combinatorial,
we will take this combinatorial reformulation (due to Mason \cite{Mason}) as our definition.

Demazure characters are rarely symmetric polynomials.  In fact,
the collection of all Demazure characters
$\{ \kappa_{\gamma}(\xx_n) \,:\, \text{$\gamma$ a weak composition} \}$ forms 
a basis of the polynomial ring $\QQ[\xx_n]$.  
Let $B \subseteq GL_n(\CC)$ be the subgroup of upper triangular matrices.
The Demazure character
$\kappa_{\gamma}(\xx_n)$ is the trace of the diagonal matrix
$\mathrm{diag}(x_1, \dots , x_n)$ acting on the indecomposable polynomial representation of 
$B$ indexed by $\gamma$.

It will be convenient  to consider Demazure characters in a reversed set of variables.
We denote by $\xx_n^* = (x_n, \dots, x_1)$ our list of $n$ variables in reverse order.
Hence, if $f(\xx_n) = f(x_1, \dots, x_n) \in \QQ[\xx_n]$ is any polynomial, we set
$f(\xx_n^*) := f(x_n, \dots, x_1)$.  
In particular, for any composition $\gamma$ with $\ell(\gamma) = n$
we have the {\em reverse Demazure character}
\begin{equation*}
\kappa_{\gamma}(\xx_n^*) = \kappa_{\gamma}(x_n, \dots, x_1).
\end{equation*}
Note that if $f(\xx_n) \in \QQ[\xx_n]^{\symm_n}$ is a symmetric polynomial, we have
$f(\xx_n) = f(\xx_n^*)$.

Let us mention a recursive construction of the Demazure characters.  
If $\gamma = (\gamma_1 \geq \cdots \geq \gamma_n)$ is a {\em dominant} composition,
we have $\kappa_{\gamma}(\xx_n) = x_1^{\gamma_1} \cdots x_n^{\gamma_n}$.  In general,
suppose that $\gamma = (\gamma_1, \dots , \gamma_n)$ is obtained
from $\gamma' = (\gamma'_1, \dots, \gamma'_n)$ by swapping 
$\gamma'_i > \gamma'_{i+1}$.  Then
\begin{equation*}
\kappa_{\gamma}(\xx_n) = \frac{1 - s_i}{x_i - x_{i+1}} [x_i \cdot \kappa_{\gamma'}(\xx_n)].
\end{equation*}
Here $s_i$ acts on polynomials by interchanging $x_i$ and $x_{i+1}$, so that 
$\frac{1 - s_i}{x_i - x_{i+1}}$ is the {\em divided difference operator}
and 
$\frac{1 - s_i}{x_i - x_{i+1}} \cdot x_i$ is the {\em isobaric divided difference operator}.
Although this recursive construction (the {\em Demazure Character Formula}) could be used 
to prove some of our results, Mason's \cite{Mason} 
combinatorial interpretation of Demazure characters in terms of SSK
will be crucial in our work.

\subsection{Gr\"obner bases}
A total order $<$ on the monomials in the polynomial ring $\QQ[\xx_n]$ is called a 
{\em monomial order} if 
\begin{enumerate}
\item for any monomial $m$ we have $1 \leq m$, and
\item for any monomials $m, m',$ and $m''$, we have that $m < m'$ implies $m \cdot m'' < m' \cdot m''$.
\end{enumerate}
The {\em lexicographic} monomial order $<_{lex}$ is defined by 
$x_1^{a_1} \cdots x_n^{a_n} <_{lex} x_1^{b_1} \cdots x_n^{b_n}$ if there is an index $i$ such
that $a_1 = b_1, \dots, a_{i-1} = b_{i-1},$ and $a_i < b_i$.

Let $<$ be a monomial order on $\QQ[\xx_n]$.
For any nonzero polynomial $f \in \QQ[\xx_n]$, let 
$\initial_{<}(f)$ be the leading monomial of $f$ with respect to $<$.
For an ideal $I \subseteq \QQ[\xx_n]$, let $\initial_<(I) = \langle \initial_<(f) \,:\, f \in I - \{0\} \rangle$ be the  ideal
generated by the leading monomials of nonzero polynomials in $I$.

A finite collection $G = \{g_1, \dots, g_r\}$ of nonzero polynomials in $I$ is a
{\em Gr\"obner basis} if 
$\initial_<(I) = \langle \initial_<(g_1), \dots, \initial_<(g_r) \rangle$;
this immediately implies that $I = \langle g_1, \dots, g_r \rangle$.  The Gr\"obner basis
$G$ is called {\em reduced} if 
\begin{enumerate}
\item  the coefficient of $\initial_<(g_i)$ is $1$ for all $i$, and
\item  for $i \neq j$, no monomial in $g_j$ is divisible by $\initial_<(g_i)$.
\end{enumerate}
For a fixed monomial order, every ideal $I \subseteq \QQ[\xx_n]$ has a 
unique reduced Gr\"obner basis.

Gr\"obner bases are helpful in constructing linear bases of quotient rings.
In particular, let $I \subseteq \QQ[\xx_n]$ be an ideal and let 
$G = \{g_1, \dots, g_r\}$ be a Gr\"obner basis of $I$ with respect to some monomial order $<$.
A monomial $m \in \QQ[\xx_n]$ is called a {\em standard monomial} if 
$m \nmid \initial_<(f)$  for all $f \in I - \{0\}$.  Equivalently, a monomial $m$ is standard 
if $m \nmid \initial_<(g_i)$ for $1 \leq i \leq r$.  The collection
\begin{equation*}
\{m + I \,:\, \text{$m$ a standard monomial} \}
\end{equation*}
gives a vector space basis for the quotient ring 
$\frac{\QQ[\xx_n]}{I}$; this is called the {\em standard monomial basis}.

\section{Some polynomial identities}
\label{Some polynomial identities}

In this section we  prove a family of identities involving polynomials and symmetric functions
which will serve as lemmata for the analysis of the quotient ring $R_{n,k}$.
The first of these is a vanishing property satisfied by certain alternating products of 
elementary and homogeneous symmetric function evaluations.

\begin{lemma}
\label{vanishing-lemma}
Let $k \leq n$, let $\alpha_1, \dots, \alpha_k \in \QQ$ be distinct rational numbers, and let 
$\beta_1, \dots, \beta_n \in \QQ$ be rational numbers with the property that
$\{ \beta_1, \dots, \beta_n \} = \{\alpha_1, \dots , \alpha_k\}$.
For any $n-k+1 \leq r \leq n$ we have 
\begin{equation}
\label{vanishing-equation}
\sum_{j = 0}^r (-1)^j e_{r-j}(\beta_1, \dots, \beta_n) h_j(\alpha_1, \dots, \alpha_k) = 0.
\end{equation}
\end{lemma}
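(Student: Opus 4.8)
The plan is to recognize the alternating sum in \eqref{vanishing-equation} as a coefficient in a generating-function identity and exploit the hypothesis that the multiset $\{\beta_1,\dots,\beta_n\}$ equals the set $\{\alpha_1,\dots,\alpha_k\}$ (so the $\beta$'s are exactly the $\alpha$'s, each occurring with some multiplicity, and there are no other values). Recall the fundamental generating-function identities
\begin{equation*}
\sum_{i \geq 0} e_i(\beta_1,\dots,\beta_n)\, (-t)^i = \prod_{m=1}^{n}(1 - \beta_m t), \qquad
\sum_{j \geq 0} h_j(\alpha_1,\dots,\alpha_k)\, t^j = \prod_{\ell=1}^{k} \frac{1}{1 - \alpha_\ell t}.
\end{equation*}
Multiplying these, the left-hand product $\prod_{m}(1-\beta_m t)$ is divisible by $\prod_{\ell}(1-\alpha_\ell t)$ as a polynomial in $t$ — precisely because every $\beta_m$ is one of the $\alpha_\ell$, and the $\alpha_\ell$ are distinct, so each factor $(1-\alpha_\ell t)$ appears at least once on the left. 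Hence the product of the two generating functions is the polynomial $\prod_{m}(1-\beta_m t) \big/ \prod_{\ell}(1-\alpha_\ell t)$, which has degree exactly $n - k$ in $t$.

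Concretely, I would write
\begin{equation*}
\left( \sum_{i \geq 0} e_i(\beta_1,\dots,\beta_n)(-t)^i \right)\left( \sum_{j \geq 0} h_j(\alpha_1,\dots,\alpha_k) t^j \right) = \prod_{m=1}^{n}(1-\beta_m t)\cdot \prod_{\ell=1}^{k}\frac{1}{1-\alpha_\ell t},
\end{equation*}
and observe that the coefficient of $t^r$ on the left is exactly $\sum_{j=0}^{r} (-1)^{r-j} e_{r-j}(\beta_1,\dots,\beta_n) h_j(\alpha_1,\dots,\alpha_k)$, which up to the global sign $(-1)^r$ is the sum in \eqref{vanishing-equation}. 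Since the right-hand side is a polynomial in $t$ of degree $n-k$, every coefficient of $t^r$ with $r \geq n-k+1$ vanishes, which is exactly the claimed range $n-k+1 \leq r \leq n$.

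The only genuine point to check carefully — and the step I would flag as the crux — is the divisibility claim: that $\prod_{\ell=1}^{k}(1-\alpha_\ell t)$ divides $\prod_{m=1}^{n}(1-\beta_m t)$ in $\QQ[t]$. This is where the two hypotheses are used together: distinctness of the $\alpha_\ell$ guarantees the factors $(1-\alpha_\ell t)$ are pairwise coprime, and the set equality $\{\beta_1,\dots,\beta_n\} = \{\alpha_1,\dots,\alpha_k\}$ guarantees each such factor divides the $\beta$-product (with multiplicity equal to the number of indices $m$ with $\beta_m = \alpha_\ell$, which is at least one). One should also note the degree count: $\deg \prod_m(1-\beta_m t) = n$ and $\deg \prod_\ell (1-\alpha_\ell t) = k$, so the quotient has degree $n-k$, giving the sharp vanishing range. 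Everything else is a routine comparison of coefficients, so no further grinding is needed.
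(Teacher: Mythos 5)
Your proof is correct and takes essentially the same approach as the paper: identify the alternating sum as the coefficient of $t^r$ in the rational function $\prod_m(1 \pm \beta_m t)\big/\prod_\ell(1 \pm \alpha_\ell t)$, observe that the set equality and distinctness of the $\alpha_\ell$ make this a polynomial of degree $n-k$, and conclude that the coefficients of $t^r$ vanish for $r > n-k$. The only cosmetic difference is the sign convention ($1-\alpha t$ versus the paper's $1+\alpha t$), which you correctly account for via the global factor $(-1)^r$; you also spell out the divisibility step a bit more explicitly than the paper does, which is fine.
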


\begin{proof}
The left hand side of Equation~\ref{vanishing-equation} is the coefficient of $t^r$ in the power series
\begin{equation}
\label{factor-equation}
\frac{\prod_{i = 1}^n (1 + t \beta_i)}{\prod_{i = 1}^k (1 + t \alpha_j)}.
\end{equation}
By assumption, every term in the denominator cancels a term in the numerator, so
this power series is in fact a polynomial in $t$ of degree $n-k$.
If $r > n-k$, the left hand side of Equation~\ref{vanishing-equation} therefore equals zero.
\end{proof}

The next lemma will be used to show that certain reverse Demazure characters
$\kappa_{\gamma}(\xx_n^*)$ lie in the ideal $I_{n,k}$.  In order to state this lemma, we will
introduce a family of `skip' objects related to a set $S \subseteq [n]$.

\begin{defn}
Let $S = \{s_1 < \cdots < s_m \} \subseteq [n]$ be a set.
\begin{enumerate}
\item  The {\em skip monomial} $\xx(S) \in \QQ[\xx_n]$ is the monomial
\begin{equation*}
\xx(S) := x_{s_1}^{s_1} x_{s_2}^{s_2 - 1} \cdots x_{s_m}^{s_m - m + 1}.
\end{equation*}
\item  The {\em skip composition} $\gamma(S) = (\gamma_1, \dots, \gamma_n)$ 
is the weak composition of length $n$  defined by
\begin{equation*}
\gamma_i := \begin{cases}
0 & i \notin S \\
s_j - j + 1 & i = s_j \in S.
\end{cases}
\end{equation*}
\end{enumerate}
\end{defn}

For example, let $n = 8$ and take $S = \{2,3,6,8\}$.  We have
$\xx(S) = x_2^2 x_3^2 x_6^4 x_8^5 \in \QQ[\xx_8]$ and
$\gamma(S) = (0,2,2,0,0,4,0,5)$.  In general, the support of the monomial $\xx(S)$ is the set 
$S$ and $\gamma(S)$ is the exponent vector of $\xx(S)$.  The terminology here comes from the 
fact that the power of the lowest variable in $\xx(S)$ is the variable index, and powers of higher
variables increase according to how many indices the set $S$ skips.  
Skip monomials will be crucial in our study of $R_{n,k}$.

Let $\gamma = (\gamma_1, \dots, \gamma_n)$ be a weak composition  and 
consider a collection $\rho$ of cells lying immediately above the columns of the 
skyline diagram of $\gamma$.  The collection $\rho$ will be called {\em right biased (RB) for $\gamma$} if
\begin{enumerate}
\item there is at most one cell in $\rho$ on top of any column of $\gamma$, and
\item among the columns of $\gamma$ with a fixed height $h$, the cells of $\rho$ are right-justified.
Note that $0$ is a possible value for $h$.
\end{enumerate} 
For example, consider the composition $\gamma = (0,4,0,3,3,0,2,0,0)$.  The collection
of cells marked with $\circ$ on the left is RB while the collection of $\circ$ cells on the right is not RB.
\begin{small}
\begin{center}
\begin{Young}
 , & $\circ$ & , &, &, &, &, &, &, \cr
 , & &, & , & $\circ$ &, &, &, & ,\cr
 , & &, & & &, &, &, &, \cr
 , & &, & & &, & &, &, \cr
 , & &, & & & $\circ$ & & $\circ$ & $\circ$ \cr
  ,9 & ,8 & ,7 & ,6 & ,5 & ,4 & ,3& ,2 & ,1 
\end{Young}
\hspace{0.5 in}
\begin{Young}
 , & &, & , & , &, &, &, & ,\cr
 , & &, & & &, &$\circ$ &, &, \cr
 , & &, & & &, & &, &, \cr
 , & &$\circ$ & & & , & & $\circ$ & $\circ$ \cr
  ,9 & ,8 & ,7 & ,6 & ,5 & ,4 & ,3& ,2 & ,1 
\end{Young}
\end{center}
\end{small}

The dual Pieri rule describes how to expand products of the form $e_d(\xx) s_{\lambda}(\xx)$ 
in the Schur basis of symmetric functions.
The following Demazure version of the dual Pieri rule is due to Haglund, Luoto, Mason,
and van Willigenburg.  It describes how to expand products of the form
$e_d(\xx_n) \kappa_{\gamma}(\xx_n)$ in the Demazure character basis of polynomials.
If $\rho$ is a RB collection of cells for $\gamma$, let $\gamma \cup \rho$ be the composition
which is the set theoretic union of the skyline diagram of $\gamma$ and the collection 
of cells $\rho$.

\begin{theorem}
\label{demazure-dual-pieri}
\cite[Thm. 6.1, $\lambda = (1^d)$]{HLMV}  
Let $\gamma = (\gamma_1, \dots, \gamma_n)$ be a weak composition and $d \geq 0$.
We have
\begin{equation}
e_d(\xx_n) \kappa_{\gamma}(\xx_n) = \sum_{|\rho| = d} \kappa_{(\gamma \cup \rho)}(\xx_n),
\end{equation}
where the sum is over all RB collections of cells $\rho$ of size $d$.
\end{theorem}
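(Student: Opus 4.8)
The plan is to give a content-preserving combinatorial bijection on skyline fillings, using Mason's model for $\kappa_\gamma$; I also sketch an alternative via divided differences. Strictly speaking the identity is \cite[Thm.~6.1]{HLMV} with $\lambda = (1^d)$, so one may simply invoke that reference, but here is how I would argue it directly.

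\textbf{Plan via skyline fillings.} First expand $e_d(\xx_n) = \sum_T \prod_{i \in T} x_i$, the sum over $d$-element subsets $T \subseteq [n]$. Then the left-hand side becomes the generating function $\sum_{(F,T)} \xx^{\wt(F)} \prod_{i \in T} x_i$ over pairs $(F,T)$ with $F \in \SSK(\gamma^*)$, while the right-hand side is $\sum_{(\rho,G)} \xx^{\wt(G)}$ over pairs $(\rho,G)$ with $\rho$ a right biased collection of size $d$ for $\gamma$ and $G \in \SSK((\gamma \cup \rho)^*)$. The goal is a content-preserving bijection $(\rho,G) \leftrightarrow (F,T)$. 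To pass from $(\rho,G)$ to $(F,T)$: each cell of $\rho$ is the top cell of its column in $G$ and, since entries weakly decrease going up a column, holds that column's minimum entry; deleting these $d$ cells produces the candidate $F$, and the deleted entries form the candidate $T$. In the other direction one reinserts the entries of $T$, one at a time, each on top of the column forced by the requirements that columns still weakly decrease and that every triple (type $A$ or type $B$, including those meeting the basement) remains an inversion triple, and one checks that the new cells form a right biased collection for $\gamma$.

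\textbf{Alternative plan via divided differences.} Since $e_d(\xx_n)$ is symmetric, multiplication by it commutes with every isobaric divided difference $\pi_i = \frac{1 - s_i}{x_i - x_{i+1}} x_i$. Using $\kappa_\gamma = \pi_i \kappa_{s_i \gamma}$ whenever $\gamma_i < \gamma_{i+1}$, one can induct on the number of pairs $i < j$ with $\gamma_i < \gamma_j$; the base case is $\gamma$ dominant, where $\kappa_\gamma = x_1^{\gamma_1} \cdots x_n^{\gamma_n}$ and the claimed expansion is a short direct check. The inductive step reduces to comparing $\pi_i \bigl( \sum_{\rho'} \kappa_{s_i\gamma \cup \rho'} \bigr)$ with $\sum_\rho \kappa_{\gamma \cup \rho}$; since $\gamma$ and $s_i\gamma$ differ only by transposing two columns of distinct heights, their right biased collections correspond by matching the number of added cells in each height class, and one then applies $\pi_i \kappa_\delta = \kappa_{s_i\delta}$ or $\kappa_\delta$ according to whether $\delta_i > \delta_{i+1}$ or not.

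\textbf{Main obstacle.} In either approach the crux is the same: verifying that the right biased normalization is precisely what makes the correspondence a bijection. Combinatorially, one must check that deleting the top cells of an $\SSK$ really does leave an $\SSK$, that the multiset of deleted entries is genuinely a subset of $[n]$, and that reinsertion has a unique legal column at each step --- each of which comes down to a somewhat delicate local analysis of how type $A$ and type $B$ triples and the weak column condition behave when a column's height changes by one. Algebraically, the matching subtlety is the case where an added cell promotes a column from height $h$ to height $h+1$, so that it moves between adjacent height classes; one has to confirm that the $\pi_i$-images still line up with the right biased collections for $\gamma$.
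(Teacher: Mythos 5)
The paper offers no proof of this statement---it cites it directly as \cite[Thm.~6.1, $\lambda = (1^d)$]{HLMV}, exactly as your opening sentence suggests, so the ``official route'' matches your primary observation. Your combinatorial sketch is in the spirit of the HLMV argument (an insertion/deletion scheme on skyline fillings), and your list of main obstacles is a fair account of what would actually need proving.

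Your algebraic sketch, however, has a genuine gap in the base case. If $\gamma$ is dominant (weakly decreasing) but has repeated parts, then $\gamma \cup \rho$ is generally \emph{not} dominant, so the right-hand side $\sum_{\rho} \kappa_{\gamma \cup \rho}(\xx_n)$ is not a sum of monomials and there is no ``short direct check.'' Concretely, take $n = 3$, $\gamma = (2,2,1)$, $d = 1$: the two RB collections give $\gamma \cup \rho \in \{(2,3,1),\, (2,2,2)\}$, and $\kappa_{(2,3,1)}(\xx_3) = x_1^3 x_2^2 x_3 + x_1^2 x_2^3 x_3$ is a two-term polynomial, so verifying
\begin{equation*}
e_1(\xx_3) \cdot x_1^2 x_2^2 x_3 \;=\; \kappa_{(2,3,1)}(\xx_3) + \kappa_{(2,2,2)}(\xx_3)
\end{equation*}
already requires understanding key polynomials of non-dominant shape, which is essentially the whole difficulty. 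The base case that \emph{is} trivial is strictly decreasing $\gamma$ (then each $\gamma \cup \rho$ stays dominant, every $\kappa_{\gamma\cup\rho}$ is a monomial, and the identity is just the monomial expansion of $e_d \cdot \xx^\gamma$). But your induction on the number of pairs $i < j$ with $\gamma_i < \gamma_j$ bottoms out at merely weakly decreasing $\gamma$, and you cannot push from weakly to strictly decreasing using $\pi_i$, since when $\gamma_i = \gamma_{i+1}$ the operator $\pi_i$ fixes $\kappa_\gamma$ rather than reducing it. As stated, then, the algebraic route is not self-contained: the work required in its base case is already the content of the combinatorial argument.
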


We will add RB collections to the skyline diagrams of compositions;  we 
now describe collections of cells we will remove.  Let 
$\gamma = (\gamma_1, \dots, \gamma_n)$ be a weak composition.  A collection $\lambda$ 
of non-basement
cells in the skyline diagram of $\gamma$ will be called {\em left leaning (LL) for $\gamma$} if
\begin{enumerate}
\item  the cells of $\lambda$ are top-justified within any column,
\item  there is at most one cell of $\lambda$ in any row, and
\item  it is impossible to move a cell of $\lambda$ to the left in such a way that  (1) remains satisfied.
\end{enumerate}
For example, if $\gamma = (0,4,0,3,3,0,2,0,0)$ the collection of cells marked with $\bullet$ 
on the left is LL while the collection of $\bullet$ cells on the right is not LL.
\begin{small}
\begin{center}
\begin{Young}
 , & $\bullet$  &, & , & , &, &, &, & ,\cr
 , & &, & & &, & , &, &, \cr
 , & &, & & &, & $\bullet$ &, &, \cr
 , & &, & & & , & $\bullet$ & , & , \cr
  ,9 & ,8 & ,7 & ,6 & ,5 & ,4 & ,3& ,2 & ,1 
\end{Young}
\hspace{0.5in}
\begin{Young}
 , & &, & , & , &, &, &, & ,\cr
 , & &, & $\bullet$  & &, & , &, &, \cr
 , & &, & & &, & $\bullet$ &, &, \cr
 , & &,  & & & , & & , & , \cr
  ,9 & ,8 & ,7 & ,6 & ,5 & ,4 & ,3& ,2 & ,1 
\end{Young}
\end{center}
\end{small}
If $\lambda$ is any LL collection of cells for $\gamma$, let $\gamma - \lambda$ be the 
composition whose skyline diagram is the set theoretic difference of $\gamma$ and $\lambda$.

We will need a little bit more notation to state our lemma.  For any  composition
$\gamma$, let $\overline{\gamma}$ denote the {\em decremented} composition 
obtained by subtracting $1$ from every nonzero part of $\gamma$:
\begin{equation*}
\overline{\gamma}_i := \begin{cases}
\gamma_i - 1 & \gamma_i > 0 \\
0 & \gamma_i = 0.
\end{cases}
\end{equation*}  
In particular, if $\gamma(S)$ is a skip composition, we have 
$|\overline{\gamma(S)}| = |\gamma(S)| - |S|$.  Also, for any collection of cells $\nu$,
let $|\nu|$ denote the number of cells in $\nu$.

\begin{lemma}
\label{demazure-identity}
Let $k \leq n$ and let $S \subseteq [n]$ with $|S| = n-k+1$.  Let $\gamma(S)$ be the corresponding
skip monomial and consider the reversal $\gamma(S)^*$ and the decremented reversal
$\overline{\gamma(S)^*}$.  

We have the identity
\begin{equation}
\label{magical-demazure-equation}
\kappa_{\gamma(S)^*}(\xx_n) = \sum_{\lambda} (-1)^{|\lambda|}
\kappa_{\overline{\gamma(S)^*} - \lambda}(\xx_n) e_{n-k+1+|\lambda|}(\xx_n),
\end{equation}
where the sum is over all LL collections $\lambda$ for $\overline{\gamma(S)^*}$.

Reversing variables and applying the symmetry of $e_r(\xx_n)$ we get
\begin{equation}
\label{magical-demazure-equations-friend}
\kappa_{\gamma(S)^*}(\xx_n^*) = \sum_{\lambda} (-1)^{|\lambda|}
\kappa_{\overline{\gamma(S)^*} - \lambda}(\xx_n^*) e_{n-k+1+|\lambda|}(\xx_n).
\end{equation}
\end{lemma}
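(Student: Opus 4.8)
The plan is to expand the right-hand side of~\eqref{magical-demazure-equation} using the Demazure dual Pieri rule (Theorem~\ref{demazure-dual-pieri}). Write $\delta := \overline{\gamma(S)^*}$ and $m := n-k+1 = |S|$. For each LL collection $\lambda$ for $\delta$, Theorem~\ref{demazure-dual-pieri} expands $\kappa_{\delta - \lambda}(\xx_n)\, e_{m + |\lambda|}(\xx_n)$ as $\sum_{\rho} \kappa_{(\delta - \lambda) \cup \rho}(\xx_n)$, the sum being over all RB collections $\rho$ for $\delta - \lambda$ with $|\rho| = m + |\lambda|$. Hence the right-hand side of~\eqref{magical-demazure-equation} becomes
\[
\sum_{(\lambda, \rho)} (-1)^{|\lambda|} \, \kappa_{(\delta - \lambda) \cup \rho}(\xx_n),
\]
the sum running over all pairs $(\lambda, \rho)$ with $\lambda$ an LL collection for $\delta$ and $\rho$ an RB collection for $\delta - \lambda$ of size $m + |\lambda|$. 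The goal is to show that in this signed sum every Demazure character cancels except $\kappa_{\gamma(S)^*}(\xx_n)$, which survives once with coefficient $+1$.

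The first step is to isolate that surviving term. Let $\rho_0$ be the collection obtained by placing a single cell directly above each column that is strictly positive in $\gamma(S)^*$. Since $\gamma(S)^*_i - \delta_i$ equals $1$ exactly when $\gamma(S)^*_i > 0$ and is $0$ otherwise, we get $|\rho_0| = m$ and $\delta \cup \rho_0 = \gamma(S)^*$. A direct check using the explicit shape of a skip composition shows $\rho_0$ is RB for $\delta$: the nonzero parts of $\gamma(S)^*$ read from left to right are weakly decreasing, every positive column of $\delta$ receives one of the cells of $\rho_0$ (so right-justification is automatic within each positive height class), and the columns where $\gamma(S)^*$ equals $1$ occupy the largest indices, so the cells of $\rho_0$ lying on the height-zero columns of $\delta$ are right-justified as well. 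Thus $(\emptyset, \rho_0)$ is one of the pairs above and contributes $+\kappa_{\gamma(S)^*}(\xx_n)$.

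It then remains to exhibit a sign-reversing, composition-preserving involution $\iota$ on the set of all pairs $(\lambda, \rho)$ as above other than $(\emptyset, \rho_0)$: that is, $\iota(\lambda,\rho) = (\lambda',\rho')$ with $(\delta - \lambda')\cup\rho' = (\delta-\lambda)\cup\rho$ and $|\lambda'| \equiv |\lambda|+1 \pmod 2$. Equivalently, organizing the signed sum by its output composition $\mu$, one must show that $\sum (-1)^{|\lambda|}$ over the pairs producing a fixed $\mu$ equals $1$ if $\mu = \gamma(S)^*$ and $0$ otherwise. The involution should toggle a single cell: reading the columns of $\delta$ in a fixed order, one locates the first column at which the configuration of deleted cells (recorded by $\lambda$) and the added cell (recorded by $\rho$) departs from the pattern prescribed by $\rho_0$, and there one either transfers the added $\rho$-cell into the set of deleted cells $\lambda$, or undoes such a transfer; either move changes $|\lambda|$ by exactly one while leaving $(\delta - \lambda)\cup\rho$ fixed. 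I expect the crux of the argument to be precisely the verification that this toggle is well-defined and genuinely an involution. The subtlety is that within a single column of $\delta$ one may simultaneously delete several of its top cells (into $\lambda$) and add a cell back (into $\rho$), so one must confirm that the move preserves both the LL conditions on $\lambda$ (top-justification, at most one cell per row, and the ``cannot slide left'' maximality) and the RB conditions on $\rho$ (at most one cell per column, right-justification within each height class), and that the scanning rule selecting which column to toggle is canonical; an inductive route via the Demazure recursion (using that isobaric divided differences commute with multiplication by the symmetric $e_r(\xx_n)$) runs into analogous bookkeeping and seems less robust. With the involution in hand,~\eqref{magical-demazure-equation} follows, and~\eqref{magical-demazure-equations-friend} is then immediate from~\eqref{magical-demazure-equation} by performing the substitution $x_i \leftrightarrow x_{n+1-i}$ and invoking the symmetry of each $e_r(\xx_n)$.
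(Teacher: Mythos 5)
Your overall plan coincides with the paper's: expand the right-hand side of \eqref{magical-demazure-equation} via the Demazure dual Pieri rule (Theorem~\ref{demazure-dual-pieri}), identify $(\emptyset,\rho_0)$ as the unique surviving pair with $\delta\cup\rho_0=\gamma(S)^*$, and cancel all other contributions with a sign-reversing, weight-preserving involution. Your verification that $\rho_0$ is RB for $\delta=\overline{\gamma(S)^*}$ is also correct and is used implicitly in the paper as well.

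The problem lies in the toggle itself, which you describe as ``transfers the added $\rho$-cell into the set of deleted cells $\lambda$, or undoes such a transfer.'' Moving a cell \emph{from} $\rho$ \emph{to} $\lambda$ does not leave the output composition $(\delta-\lambda)\cup\rho$ fixed, and indeed a cell of $\rho$ lying outside $\delta$ cannot even be placed into $\lambda$ (which must be a subset of the cells of $\delta$). The correct move — which is what the paper uses — is to take one cell $c$ of $\delta$ and \emph{simultaneously} insert $c$ into both $\lambda$ and $\rho$, or simultaneously delete $c$ from both. This changes $|\lambda|$ (hence the sign) by one while keeping $|\rho|-|\lambda|=n-k+1$ and $(\delta-\lambda)\cup\rho$ fixed. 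The paper encodes this via ``partisan skylines'' with four labels $\square,\circ,\bullet,\odot$, where $\odot$ marks a cell of $\delta$ that is both removed and re-added; the involution toggles $\square\leftrightarrow\odot$ on the topmost non-$\bullet$ cell in the rightmost nonempty column that is not frozen (i.e.\ does not carry a $\circ$ cell).

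Beyond the mis-described toggle, the part you flag as ``the crux'' really is the crux, and you have not supplied it. One must show (i) the chosen column always has a non-$\bullet$ cell to toggle — the paper's Claim~1, which requires a counting argument using Condition~(3) — and (ii) the toggle preserves both the LL condition on $\lambda$ (otherwise adding a $\odot$ could produce a collection that can be slid left) and the RB condition on $\rho$ (otherwise deleting a $\odot$ could leave $\rho$ not right-justified) — the paper's Claim~2, which involves case analysis on the position $(i,j)$ of the toggled cell and on whether $j=1$. These are not routine: the LL/RB conditions interact with each other in precisely the places where a careless toggle would violate them, and the ``canonical'' scanning rule you gesture at (``first column where the configuration departs from $\rho_0$'') is not obviously the right one; the paper scans for the rightmost non-frozen nonempty column, and ``frozen'' is defined via $\circ$ cells rather than via comparison with $\rho_0$. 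So the skeleton matches the paper, but the body of the proof is missing.
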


The proof of this lemma is involved, but worth it.  
Equation~\ref{magical-demazure-equations-friend} will be our tool for proving 
that   reverse Demazure characters 
 lie in  various ideals.

\begin{proof}
It suffices to prove Equation~\ref{magical-demazure-equation}.  To do this, we will introduce
a sign-reversing involution on a set of combinatorial objects called `partisan skylines'.

A {\em partisan skyline} $\nu$ is a skyline diagram of a composition whose 
non-basement
cells have one
of four
labels
\begin{center}
\begin{Young}
 \cr
\end{Young} \hspace{0.3in}
\begin{Young}
$\circ$ \cr
\end{Young} \hspace{0.3in}
\begin{Young}
$\bullet$ \cr
\end{Young} \hspace{0.3in}
\begin{Young}
$\odot$ \cr
\end{Young}
\end{center}
satisfying the following conditions.  For any collection of labels $L$, we let 
$\nu(L)$ denote the collection of cells in $\nu$ which have labels in $L$.
\begin{enumerate}
\item Reading any column of $\nu$ from bottom to top gives a sequence of labels
$\square$,
followed by a sequence of labels $\odot$,
followed by a sequence of labels  $\bullet$,
followed by a sequence of labels $\circ$.  Any of these four sequences could be empty.
If a column contains the label $\circ$, it is the lone non-$\square$ box in its column.
\item   We have $\nu(\square, \bullet, \odot) = \overline{\gamma(S)^*}$.
\item   We have $|\nu(\square, \circ, \odot)| = |\gamma(S)^*|$.
\item  The collection of cells $\nu(\bullet, \odot)$  is LL 
for $\overline{\gamma(S)^*}$.
\item  The collection of cells $\nu(\circ, \odot)$
 is RB for $\nu(\square)$.
\end{enumerate}
The {\em sign} of a partisan skyline $\nu$ is
\begin{equation*}
\sign(\nu) = (-1)^{|\nu(\bullet, \odot)|}
\end{equation*}
and the {\em weight} 
of $\nu$ is the composition 
\begin{equation*}\wt(\nu) = \nu(\square, \circ, \odot).
\end{equation*}

Partisan skylines combinatorially encode the right hand side of 
Equation~\ref{magical-demazure-equation}.  By Theorem~\ref{demazure-dual-pieri},
a typical term in this expression is obtained
by choosing a LL collection $\lambda$ for $\overline{\gamma(S)^*}$ to remove (at the cost of a sign),
and then adding a RB collection $\rho$ of cells to the resulting composition with
$|\rho| = n-k+1+|\lambda| = |\gamma(S)^*| + |\lambda|$.
Cells which are removed are labeled $\bullet$, cells which are added are labeled $\circ$, and 
cells which are removed and then added are labeled $\odot$.
Said differently, we have
\begin{equation}
\label{lemma-3.4-one}
\sum_{\lambda} (-1)^{|\lambda|}
\kappa_{\overline{\gamma(S)^*} - \lambda}(\xx_n) e_{n-k+1+|\lambda|}(\xx_n) = 
\sum_{\nu} \sign(\nu) \kappa_{\wt(\nu)}(\xx_n),
\end{equation}
where the sum is over all partisan skylines $\nu$.

As an example of this construction, consider the case $n = 9$,
$k = 5$, and $S = \{1,3,5,6,9\}$.  
Then $\gamma(S) = (1,0,2,0,3,3,0,0,5), \overline{\gamma(S)} = (0,0,1,0,2,2,0,0,4)$, and
the skyline diagram of 
$\overline{\gamma(S)^*}$ is shown below.
\begin{small}
\begin{center}
\begin{Young}
 &, &, &, &, &, &, &, &, \cr
 &, &, & ,&, &, &, &, &, \cr
  &, & , & & &, &, &, &, \cr
  &, &, & & & , & & , & , \cr
  ,9 & ,8 & ,7 & ,6 & ,5 & ,4 & ,3& ,2 & ,1 
\end{Young}
\end{center}
\end{small}
Three possible partisan skylines are shown in (\ref{partisans}) below.
\begin{equation}
\label{partisans}
\begin{small}
\begin{Young}
\bullet &, &, &, &, &, &, &, &, \cr
\odot &, &, & \circ & \circ &, &, &, &, \cr
  &, & , & & &, &,  &, &, \cr
  &, & \circ & & & \circ &\odot  & \circ & \circ \cr
  ,9 & ,8 & ,7 & ,6 & ,5 & ,4 & ,3& ,2 & ,1 
  \end{Young}
  \hspace{0.2in}
\begin{Young}
 &, &, &, &, &, &, &, &, \cr
 &, &, & , & \circ &, &, &, &, \cr
  &, & , & \bullet & &, & \circ &, &, \cr
  &, & \circ &   & &  \circ & & \circ & \circ \cr
  ,9 & ,8 & ,7 & ,6 & ,5 & ,4 & ,3& ,2 & ,1 
\end{Young}
\hspace{0.2in}
\begin{Young}
\circ &, &, &, &, &, &, &, &, \cr
 &, &, &, &, &, &, &, &, \cr
 &, &, & \circ & \circ &, &, &, &, \cr
  &, & , & & &, & \circ &, &, \cr
  &, &, & & & , & & , & \circ \cr
  ,9 & ,8 & ,7 & ,6 & ,5 & ,4 & ,3& ,2 & ,1 
\end{Young}
\end{small}
\end{equation}
From left to right, the weights of these partisan skylines are
$(3,0,1,3,3,1,1,1,1), (4,0,1,1,3,1,2,1,1),$ and $(5,0,0,3,3,0,2,0,1) = \gamma(S)^*$.
Also from left to right, the signs of these partisan skylines are
$(-1)^3 = -1, (-1)^1 = -1,$ and $(-1)^0 = 1$.  If $\nu$ is the partisan skyline on the left of 
(\ref{partisans}), then $\nu(\square, \odot, \circ), \nu(\square),$ and $\nu(\odot, \circ)$ are 
(from left to right):
\begin{equation*}
\begin{small}
\begin{Young}
\odot &, &, & \circ & \circ &, &, &, &, \cr
  &, & , & & &, &,  &, &, \cr
  &, & \circ & & & \circ &\odot  & \circ & \circ \cr
  ,9 & ,8 & ,7 & ,6 & ,5 & ,4 & ,3& ,2 & ,1 
  \end{Young}
  \hspace{0.2in}
  \begin{Young}
  &, & , & & &, &,  &, &, \cr
  &, & , & & &, & ,  & , & , \cr
  ,9 & ,8 & ,7 & ,6 & ,5 & ,4 & ,3& ,2 & ,1 
  \end{Young}
  \hspace{0.2in}
  \begin{Young}
\odot &, &, & \circ & \circ &, &, &, &, \cr
 , &, & , &, & ,&, &,  &, &, \cr
 , &, & \circ &, &, & \circ &\odot  & \circ & \circ &,. \cr
  ,9 & ,8 & ,7 & ,6 & ,5 & ,4 & ,3& ,2 & ,1 
  \end{Young}
\end{small}
\end{equation*}

A column in a partisan skyline $\nu$ is declared {\em frozen} if it contains a cell labeled
$\circ$.  There is a unique partisan skyline in which all nonempty columns are frozen: one simply 
places a single cell labeled $\circ$ on top of every nonempty column in the skyline diagram
of $\overline{\gamma(S)^*}$, together with the appropriate number of cells labeled 
$\circ$ in the rightmost empty columns of  $\overline{\gamma(S)^*}$.  This is the
{\em completely frozen} partisan skyline $\nu_0$; an example is shown on the right 
(\ref{partisans}).  The completely frozen partisan skyline 
$\nu_0$ always satisfies $\sign(\nu_0) = 1$ and $\wt(\nu_0) = \gamma(S)^*$.

We wish to define a sign-reversing and
weight-preserving involution $\iota$ on the set of partisan skylines which
are not completely frozen.   To do this, we will use

{\bf Claim 1:} {\em If $\nu$ is any partisan skyline, then
$\nu$ does not contain a nonempty column consisting only of cells labeled $\bullet$.}

\begin{proof} {\em (of Claim 1)}
Write $S = \{s_1 < \cdots < s_{n-k+1} \}$, let $1 \leq i \leq n-k+1$, and 
suppose the nonempty column $C$ in position $n - s_i + 1$ of $\nu$ consisted only of cells 
labelled $\bullet$.  Since the column $C$ contains $s_i - i$ cells, there must be at least 
$(n-k+1) + (s_i - i)$ cells in $\nu$ labeled $\circ$.  
Let $e$ be the number of empty columns strictly to the right of $C$ in
$\overline{\gamma(S)^*}$.
By considering the number of nonempty 
columns in $\overline{\gamma(S)^*}$ besides $C$, Conditions (3) and (5)
in the definition of a partisan skyline force at least $e+1$
of these $\circ$ cells to be at height $1$.  But then Condition (5) forces the bottom box of 
$C$ to be labeled $\circ$, which is a contradiction.  
\end{proof}

We are ready to define our involution $\iota$.  Let $\nu$ be a partisan skyline which is 
not completely frozen.  
For from left to right, write the unfrozen nonempty columns of $\nu$ as 
$C_1, \dots, C_m$.  For $1 \leq i \leq m$, let $h_i$ be the number of boxes
in $C_i$ labeled $\square$ or $\odot$.  
Let $h = \min \{ h_i \,:\, 1 \leq i \leq m \}$ and let $C$ be the {\em leftmost}
column among $C_1, \dots, C_m$ with $h$ boxes labeled 
$\square$ or $\odot$.
By Claim 1, we have $h > 0$.
The highest cell in $C$ which is not labeled $\bullet$
is labeled either $\odot$ or 
$\square$.
If this cell is labeled $\odot$, change its label to
$\square$.
If this cell is labeled
$\square$,
change its label to $\odot$.  Let $\iota(\nu)$ be the resulting diagram.
As an example of the map $\iota$, the left and middle partisan skylines of
(\ref{partisans}) are sent to
\begin{small}
\begin{center}
\begin{Young}
$\bullet$ &, &, &, &, &, &, &, &, \cr
$\odot$ &, &, & $\circ$ & $\circ$ &, &, &, &, \cr
  &, & , & & &, &,  &, &, \cr
  &, & $\circ$ & & & $\circ$ &    & $\circ$ & $\circ$ \cr
  ,9 & ,8 & ,7 & ,6 & ,5 & ,4 & ,3& ,2 & ,1 
  \end{Young}
  \hspace{0.2in} \begin{normalsize}and\end{normalsize} \hspace{0.2in}
\begin{Young}
 &, &, &, &, &, &, &, &, \cr
 &, &, & , & $\circ$ &, &, &, &, \cr
  &, & , & $\bullet$ & &, & $\circ$ &, &, \cr
  &, & $\circ$ & $\odot$   & &  $\circ$ & & $\circ$ & $\circ$ &,, \cr
  ,9 & ,8 & ,7 & ,6 & ,5 & ,4 & ,3& ,2 & ,1 
\end{Young}
\end{center}
\end{small}
respectively (we have $h = 1$ in either of these examples).
To give another example (where $h = 2$ this time) the map $\iota$ 
interchanges
\begin{small}
\begin{center}
\begin{Young}
$\odot$ &, &, &, &, &, &, &, &, \cr
   &, &, & , & , &, &, &, &, \cr
  &, & , & & &, & $\circ$  &, &, \cr
  &, & $\circ$ & & & $\circ$ &    & $\circ$ & $\circ$ \cr
  ,9 & ,8 & ,7 & ,6 & ,5 & ,4 & ,3& ,2 & ,1 
  \end{Young}
  \hspace{0.2in} \begin{normalsize} with \end{normalsize} \hspace{0.2in}
\begin{Young}
$\odot$ &, &, &, &, &, &, &, &, \cr
   &, &, & , & , &, &, &, &, \cr
  &, & , & $\odot$ & &, & $\circ$  &, &, \cr
  &, & $\circ$ & & & $\circ$ &    & $\circ$ & $\circ$ \cr
  ,9 & ,8 & ,7 & ,6 & ,5 & ,4 & ,3& ,2 & ,1 
  \end{Young}
\end{center}
\end{small}

{\bf Claim 2:}
{\em Let $\nu$ be a partisan skyline which is not completely frozen. Then $\iota(\nu)$ is also 
a partisan skyline which is not completely frozen.  In fact, the sets of not frozen columns
of $\nu$ and $\iota(\nu)$ coincide.}

\begin{proof} {\em (of Claim 2)}
Let $C$ be the column which differs between $\nu$ and $\iota(\nu)$ and let 
$(i, j)$ be the coordinates the cell whose label was changed.
In the three examples above we have $(i, j) = (7,1), (4,1)$, and $(4,2)$, respectively.
Conditions (1), (2), and (3) are obviously satisfied
by $\iota(\nu)$.  

Condition (4) is clearly preserved if $\iota$ changes the label of the cell $(i, j)$ from
$\odot$ to $\square$.
The only way Condition (4) could fail is if $\iota$ changes the label of 
$(i, j)$ from
$\square$ to  $\odot$, resulting in a collection of cells 
$\iota(\nu)(\odot, \bullet)$
which is not LL for $\iota(\nu)(\square, \odot, \bullet) = \nu(\square, \odot, \bullet)$.
If $\iota(\nu)(\odot, \bullet)$ is not LL for $\iota(\nu)(\square,\odot,\bullet)$ then
either 
\begin{verse}
(i) there exists a cell $(i',j-1)$ in $\nu$ with $i' > i$ containing the label
$\odot$ or $\bullet$, or \\ (ii) 
there exists a cell $(i'',j+1)$ in $\nu$ with $i'' < i$ containing the label 
$\odot$ or $\bullet$.
\end{verse}

Suppose there exists a cell $(i', j-1)$ in $\nu$ with $i' > i$ containing the label
$\odot$ or $\bullet$.  The column $C'$ containing $i'$ is not frozen and has 
strictly fewer boxes labeled $\square$ or $\odot$ than $C$.  This contradicts
the definition of $\iota$.

Suppose there exists a cell $(i'',j+1)$ in $\nu$ with $i'' < i$ containing the label 
$\odot$ or $\bullet$.  If $(i'',j+1)$ contains the label $\bullet$, then 
the column $C''$ containing $(i'',j+1)$ would have no more boxes labeled
$\square$ or $\odot$ than $C$; since $C''$ is to the left of $C$, this contradicts
the definition of $\iota$.  If $(i'',j+1)$ contains the label $\odot$, then 
since $(i,j)$ contains the label $\square$ in $\nu$, the collection
$\nu(\circ, \odot)$ is not RB for $\nu(\square)$, contradicting the assumption
that $\nu$ is a partisan skyline.  We conclude that 
$\iota(\nu)(\bullet, \odot)$ is LL for $\overline{\gamma(S)}^*$, so that 
Condition (4) holds for $\iota(\nu)$.

Let us consider Condition (5).  If $\iota$ changes the label of $(i,j)$ from $\odot$ to $\square$,
suppose the collection $\iota(\nu)(\circ, \odot) = \nu(\circ, \odot) - \{(i,j)\}$ were not RB for 
$\iota(\nu)(\square)$.  There must then exist  some cell $(i'', j+1)$ in $\nu$ with $i'' < i$ which is labeled
with $\circ$ or $\odot$.  Since the cell $(i, j)$ has the label $\odot$ in $\nu$ and no two
cells of $\nu$ in the same row can have label $\bullet$ or $\odot$, the cell $(i'', j)$ in $\nu$ must be labeled
$\square$.  However, this violates the assumption that $\nu(\bullet, \odot)$ is LL for $\nu(\square, \bullet, \odot)$
(because we could have moved the $\odot$ in position $(i, j)$ of $\nu$ to the (leftward) position
$(i'', j)$). We conclude that 
$\iota(\nu)(\circ, \odot)$ is RB for $\nu(\square)$, so that Condition (5) holds for $\iota(\nu)$ in this case.

Finally, consider Condition (5) in the case where $\iota$ changes the label of $(i,j)$
from $\square$ to $\odot$.  Suppose the collection
$\iota(\nu)(\circ, \odot) = \nu(\circ, \odot) \cup \{(i,j)\}$ were not RB for $\iota(\nu)(\square)$.  Then there 
must be some $i' > i$ such that $(i',j)$ is not a cell in $\nu(\square,\circ,\odot)$ but $(i',j-1)$ is a cell
in $\nu(\square)$.  In particular, the column $C'$ of $\nu$ containing $(i',j-1)$ is not frozen.
If $j > 1$, then $C'$ is also nonempty, 
and contains strictly fewer boxes labeled $\square$ or $\odot$ than $C$,
contradicting the definition of $\iota$.
If $j = 1$, the fact that $\nu$ is partisan forces {\em every} coordinate $(i', j)$ with $i' > i$ to be a cell
of $\nu(\square, \circ)$, again a contradiction. 

We have demonstrated that $\iota(\nu)$ is a partisan skyline.
It is clear that $\iota$ does not freeze any columns.  This completes the proof of Claim 2.
\end{proof}

By Claim 2 $\iota$ is a well defined operator on the set of partisan skylines which
are not completely frozen.  Moreover, 
 the map $\iota$ does not affect the total number of boxes labeled $\square$
 or $\odot$ in any column (it merely swaps an $\odot$ for a $\square$ or vice
 versa).  It is also evident that $\iota$ does not freeze any columns.
 This implies that 
$\iota$ is an involution.  Since $\nu(\square, \circ, \odot) = \iota(\nu)(\square, \circ, \odot)$,
the map $\iota$ preserves weight.  Since $\iota$ changes the parity of the set
$|\nu(\bullet, \odot)|$, the map $\iota$ reverses sign.  The right hand side of 
Equation~\ref{lemma-3.4-one} becomes
\begin{equation}
\sum_{\nu} \sign(\nu) \kappa_{\wt(\nu)}(\xx_n) =
\sign(\nu_0) \kappa_{\wt(\nu_0)}(\xx_n) =
\kappa_{\gamma(S)^*}(\xx_n)
\end{equation}
and the proof is complete.
\end{proof}

Our proof of Lemma~\ref{demazure-identity} was combinatorial and relied
on a sign-reversing involution.   Since the polynomials involved in 
Equation~\ref{magical-demazure-equation} are characters of indecomposable representations
of the Borel subgroup $B \subseteq GL_n(\CC)$, Lemma~\ref{demazure-identity} suggests
the existence of a long exact sequence whose terms are (tensor products of) the corresponding
modules.  It may be interesting to find a direct algebraic proof of Lemma~\ref{demazure-identity}.

In order to determine the reduced Gr\"obner basis of the ideal $I_{n,k}$, we will need
to study the monomials appearing in the reverse Demazure characters
$\kappa_{\gamma(S)^*}(\xx_n^*)$.  Our lemma in this direction is as follows.  It states that the 
leading terms of these Demazure characters are skip monomials.

\begin{lemma}
\label{reduced-demazure-lemma}
Let $k \leq n$ and let $S \subseteq [n]$ satisfy $|S| = n-k+1$.  Let $<$ be lexicographic order.  We have
\begin{equation*}
\initial_<(\kappa_{\gamma(S)^*}(\xx_n^*)) = \xx(S).
\end{equation*}
Moreover, for any $1 \leq i \leq n$ we have 
\begin{equation*}
x_i^{\max(S)- n + k + 1} \nmid m
\end{equation*} 
for any monomial $m$ appearing
in $\kappa_{\gamma(S)^*}(\xx_n^*)$.
Finally, if $T \subseteq [n]$ satisfies $|T| = n-k+1$ and $T \neq S$, then $\xx(S) \nmid m$
for any monomial $m$ appearing in $\kappa_{\gamma(T)^*}(\xx_n^*)$.
\end{lemma}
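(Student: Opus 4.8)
The plan is to work throughout with Mason's combinatorial model. By definition $\kappa_{\gamma(S)^*}(\xx_n) = \sum_{U \in \SSK(\gamma(S))} \prod_v x_v^{c_v(U)}$, where $c_v(U)$ counts the cells of $U$ equal to $v$, so substituting $x_i \mapsto x_{n+1-i}$ gives $\kappa_{\gamma(S)^*}(\xx_n^*) = \sum_{U \in \SSK(\gamma(S))} \prod_i x_i^{c_{n-i+1}(U)}$; thus the monomials of $\kappa_{\gamma(S)^*}(\xx_n^*)$ are precisely these products. Two elementary observations drive everything: since entries weakly decrease up columns and the $c$-th column of a length-$n$ skyline diagram has basement $n-c+1$, a value $v$ occurs in $U$ only in columns $c \le n-v+1$; and the $c$-th column of a shape-$\gamma(S)$ filling holds at most $\gamma(S)_c$ cells. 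For the leading-monomial statement, let $U_0$ be the column-constant filling obtained by repeating each basement entry up its column (the $c$-th column is then constant equal to $n-c+1$). Since two nonempty columns in positions $i<j$ carry the strictly decreasing values $n-i+1 > n-j+1$, a check of the two shapes of triples shows every triple of $U_0$ is an inversion triple, so $U_0 \in \SSK(\gamma(S))$, and its content is exactly $\xx(S) = \prod_i x_i^{\gamma(S)_i}$. That $\xx(S)$ is the lexicographically largest monomial I would prove by induction on $c$: if $U$ agrees with $U_0$ on columns $1,\dots,c-1$ — which are then full of the values $n, n-1, \dots, n-c+2$ — then every cell of $U$ equal to $n-c+1$ lies in column $c$, whence $c_{n-c+1}(U) \le \gamma(S)_c = c_{n-c+1}(U_0)$, with equality forcing column $c$ of $U$ to be constant equal to $n-c+1$; so once the $x_1,\dots,x_{c-1}$ exponents agree with $\xx(S)$ the $x_c$ exponent cannot be larger, and equality throughout forces $U=U_0$.

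For the exponent bound I would abandon the combinatorial model in favor of the recursive description of Demazure characters. Let $\lambda$ be the partition obtained by sorting the parts of $\gamma(S)$ into weakly decreasing order; then $\kappa_\lambda(\xx_n) = \xx^\lambda$, and $\kappa_{\gamma(S)^*}(\xx_n)$ is obtained from it by applying a sequence of isobaric divided differences $\pi_i(h) = \frac{1-s_i}{x_i-x_{i+1}}(x_i h)$. A one-line computation of $\pi_i(x_i^a x_{i+1}^b)$ shows every monomial of $\pi_i(f)$ has all exponents at most the largest exponent occurring in $f$; hence the largest exponent occurring in $\kappa_{\gamma(S)^*}(\xx_n)$ is at most $\lambda_1 = \max_j(s_j - j + 1) = \max(S) - (n-k+1) + 1 = \max(S) - n + k$. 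Reversing the variables only permutes the exponent vector of each monomial, so the same bound holds for $\kappa_{\gamma(S)^*}(\xx_n^*)$, which is exactly the statement $x_i^{\max(S)-n+k+1} \nmid m$.

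For the last assertion, write $S = \{s_1 < \cdots < s_m\}$ and $T = \{t_1 < \cdots < t_m\}$ with $m = n-k+1$, and suppose for contradiction that $\xx(S)$ divides the monomial of some $U \in \SSK(\gamma(T))$; by the translation above this means $c_{n-s_j+1}(U) \ge \gamma(S)_{s_j} = s_j - j + 1$ for every $j$. Since the $j$ distinct values $n-s_1+1 > \cdots > n-s_j+1$ all occur in positions $\le s_j$, and the columns of $\gamma(T)$ in those positions hold $\sum_{i :\, t_i \le s_j}(t_i - i + 1)$ cells, one gets
\begin{equation*}
\sum_{l=1}^j (s_l - l + 1) \;\le\; \sum_{i :\, t_i \le s_j}(t_i - i + 1) \qquad (1 \le j \le m).
\end{equation*}
I would then deduce $t_j \le s_j$ for all $j$ by induction on $j$: if $t_j > s_j$, then the inductive hypothesis $t_i \le s_i < s_j$ for $i < j$ shows the columns of $\gamma(T)$ in positions $\le s_j$ are exactly $t_1,\dots,t_{j-1}$ (no columns when $j=1$), so the displayed inequality forces $\sum_{l=1}^j (s_l - l + 1) \le \sum_{i=1}^{j-1}(t_i - i + 1) \le \sum_{i=1}^{j-1}(s_i - i + 1)$, i.e. $s_j - j + 1 \le 0$, contradicting $s_j \ge j$. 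Given $t_j \le s_j$ for all $j$, all columns of $\gamma(T)$ lie in positions $\le s_m$, so the inequality at $j = m$ reads $\sum_l s_l \le \sum_i t_i$; with $t_i \le s_i$ for all $i$ this forces $t_i = s_i$ for all $i$, contradicting $S \ne T$.

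I expect the cell-counting induction of the last part — getting $t_j \le s_j$ off the ground and keeping the "columns in positions $\le s_j$" bookkeeping precise — to be the main obstacle. The verification that the column-constant filling $U_0$ satisfies the inversion-triple conditions in the first part is also somewhat fussy, though routine.
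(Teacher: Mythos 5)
Your proposal is correct. Parts one and three follow essentially the same path as the paper: the ``big filling'' (the paper's term for your $U_0$, where each column repeats its basement value) gives the lexicographically largest reverse-content vector, and the cumulative column-count inequality together with $|S|=|T|$ forces $S=T$; you spell out both steps in more detail than the paper, which compresses the third part to a single sentence, and your careful induction showing $t_j\le s_j$ followed by the size comparison is exactly the bookkeeping the paper leaves implicit. The genuinely different step is your argument for the exponent bound. The paper stays entirely inside Mason's SSK model: if $x_{n-i+1}^{\max(S)-n+k+1}$ divided some monomial, the corresponding SSK would contain more copies of $i$ than its tallest column can hold, hence two copies of $i$ in the same row, and then a short case analysis (columns of equal height give a type A coinversion triple, unequal heights a type B one) yields a contradiction. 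You instead invoke the isobaric-divided-difference recursion $\kappa_{\gamma(S)^*}=\pi_{i_r}\cdots\pi_{i_1}(\xx^{\lambda})$ from the dominant rearrangement $\lambda$ of $\gamma(S)^*$, observe by a direct calculation of $\pi_i(x_i^ax_{i+1}^b)$ that each $\pi_i$ can only weakly decrease the largest exponent present, and read off the bound from $\lambda_1=\max(S)-n+k$. Both are valid; the paper's keeps the whole proof within a single combinatorial framework (which the authors stress is their preferred tool for the entire analysis), while yours is shorter and avoids the triple case-check at the cost of pulling in the recursive definition that the paper sets up but otherwise declines to use in its proofs. One minor stylistic note: when verifying your $\pi_i$ claim, it is worth stating explicitly that $\pi_i$ acts on each monomial of $f$ separately, touching only the $x_i,x_{i+1}$ exponents, so that the monomial-level bound $\le\max(a,b)$ lifts to all of $f$; this is what you implicitly use and it closes the argument cleanly.
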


\begin{proof}
We  use the SSK model for Demazure characters.  Given $S$, the {\em big filling} of
$\gamma(S)$ is obtained by letting the entries in any column equal the entry in the basement 
of that column.  For example, if $(n, k) = (5,3)$ and $S = \{2,3,5\}$, then 
$\gamma(S) = (0,2,2,0,3)$ and the big filling of $\gamma(S)$ is shown
below.
\begin{center}
\begin{small}
\begin{Young}
  , &, &, &, &1 \cr
 , &4 & 3 & , &1 \cr
 , &4 & 3 & , &1 \cr
 ,5 & ,4 & ,3 & ,2 & , 1
\end{Young}
\end{small}
\end{center}
It can be observed that the big filling is SSK.  The big filling above contributes 
$x_1^3 x_3^2 x_4^2$ to $\kappa_{(0,2,2,0,3)}(\xx_5)$, and hence
$x_2^2 x_3^2 x_5^3 = \xx(235)$ to $\kappa_{(0,2,2,0,3)}(\xx_5^*)$.  In general, the big filling
of $\gamma(S)$ contributes $\xx(S)$ to $\kappa_{\gamma(S)^*}(\xx_n^*)$.  

The {\em reverse content} of any SSK $F$ is the vector
\begin{equation*}
\mathrm{revcont}(F) := (\# \text{ of $n$'s in $F$}, \dots,  \# \text{ of $2$'s in $F$}, \# \text{ of $1$'s in $F$}).
\end{equation*}
From the condition
that SSK are weakly increasing up columns  it follows that the big filling of $\gamma(S)$ 
is the unique filling of $\gamma(S)$ with the lexicographically largest reverse content vector.  
This immediately implies that $\initial_<(\kappa_{\gamma(S)^*}(\xx_n^*)) = \xx(S)$.  

Let $m$ be a monomial appearing in $\kappa_{\gamma(S)^*}(\xx_n^*)$ and let $1 \leq i \leq n$.
If $x_{n-i+1}^{\max(S)-n+k+1} \mid m$,
 there would be a SSK $F$ of shape $\gamma(S)$ containing at least $(\max(S) - n + k+1)$
copies of $i$.  By the definition of $\gamma(S)$, the highest column in the skyline diagram of 
$\gamma(S)$ has height $(\max(S) - n + k)$.  This implies that $F$ contains two copies of $i$ in the 
same row.   If these two
copies of $i$ occur in columns of the same height, the triple of cells in $F$
\begin{small}
\begin{center}
\begin{Young}
$i$& ,&, $\cdots$ & , & $i$  \cr
$j$ &, &, &, & ,
\end{Young}
\end{center}
\end{small}
forms  a type A coinversion triple, a contradiction.  If these copies occur in 
columns which are not of the same height, the triple of cells in $F$
\begin{small}
\begin{center}
\begin{Young}
,& ,& ,& , & $j$  \cr
$i$ &, &, $\cdots$ &, & $i$
\end{Young}
\end{center}
\end{small}
forms a type B coinversion triple, again a contradiction.  We conclude
that $x_{n-i+1}^{\max(S)-n+k+1} \nmid m$.

Finally, suppose $T \subseteq [n]$ satisfies $|T| = n-k+1$ and $\xx(S) \mid m$ for some 
monomial $m$ appearing in $\kappa_{\gamma(T)^*}(\xx_n^*)$.  Let $F$ be a SSK of shape 
$\gamma(T)$ which contributes $m$ to $\kappa_{\gamma(T)^*}(\xx_n^*)$.  Since $\xx(S) \mid m$ and 
$F$ decreases weakly up columns, for all $i$
we have
\begin{equation*}
\# \text{ of cells in columns $1, 2, \dots, i$ of $\gamma(T)$ } \geq
\# \text{ of cells in columns $1, 2, \dots, i$ of $\gamma(S)$}.
\end{equation*}
The definition of skip compositions forces $S = T$.
\end{proof}

To determine the graded Frobenius series of $R_{n,k}$, we will need to carefully study the symmetric
function operators $e_j(\xx)^{\perp}$.  The basic tool we will use is as follows.

\begin{lemma}
\label{e-perp-determines-symmetric-function}
Let $F(\xx), G(\xx) \in \Lambda$ be symmetric functions with equal constant
terms.  We have $F(\xx) = G(\xx)$ if and only if 
$e_j(\xx)^{\perp} F(\xx) = e_j(\xx)^{\perp} G(\xx)$ for all $j \geq 1$.
\end{lemma}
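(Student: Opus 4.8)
The plan is to exploit duality between the operators $e_j(\xx)^\perp$ and multiplication by $e_j(\xx)$, together with the fact that the elementary symmetric functions $e_1(\xx), e_2(\xx), \dots$ generate $\Lambda$ as an algebra. The forward direction is trivial: if $F(\xx) = G(\xx)$ then certainly $e_j(\xx)^\perp F(\xx) = e_j(\xx)^\perp G(\xx)$ for all $j$, since $e_j(\xx)^\perp$ is a well-defined linear operator. So the content is in the converse.

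For the converse, set $H(\xx) := F(\xx) - G(\xx)$. By hypothesis $H(\xx)$ has vanishing constant term, and $e_j(\xx)^\perp H(\xx) = 0$ for all $j \geq 1$. I want to conclude $H(\xx) = 0$. First I would reduce to a statement about the Hall inner product: since $\langle \cdot, \cdot \rangle$ is nondegenerate on $\Lambda$, it suffices to show $\langle H(\xx), P(\xx) \rangle = 0$ for every $P(\xx) \in \Lambda$. Because the elementary symmetric functions generate $\Lambda$ as a $\QQ(q,t)$-algebra, it suffices to check this when $P(\xx)$ ranges over all products $e_{j_1}(\xx) e_{j_2}(\xx) \cdots e_{j_\ell}(\xx)$ with $\ell \geq 0$ and each $j_i \geq 1$, together with the constant $P(\xx) = 1$. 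The case $P(\xx) = 1$ is exactly the statement that $H(\xx)$ has vanishing constant term, which holds by hypothesis.

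Now I would induct on $\ell$, the number of factors. For $\ell \geq 1$, write $P(\xx) = e_{j_1}(\xx) \cdot P'(\xx)$ where $P'(\xx) = e_{j_2}(\xx) \cdots e_{j_\ell}(\xx)$ is a product of $\ell - 1$ elementary symmetric functions. Using the adjointness relation
\begin{equation*}
\langle H(\xx), e_{j_1}(\xx) P'(\xx) \rangle = \langle e_{j_1}(\xx)^\perp H(\xx), P'(\xx) \rangle = \langle 0, P'(\xx) \rangle = 0,
\end{equation*}
so the inner product vanishes. (Strictly speaking one does not even need the inductive hypothesis here — a single application of adjointness plus $e_{j_1}(\xx)^\perp H(\xx) = 0$ does it — but phrasing it as an induction on $\ell$ makes transparent that we have covered all algebra generators' products.) Thus $\langle H(\xx), P(\xx) \rangle = 0$ for all $P(\xx)$ in a spanning set of $\Lambda$, hence for all $P(\xx) \in \Lambda$ by linearity, and nondegeneracy of the Hall inner product gives $H(\xx) = 0$, i.e. $F(\xx) = G(\xx)$.

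The only genuine subtlety — and the step I would be most careful about — is the claim that the monomials in the $e_j(\xx)$ span $\Lambda$ over $\QQ(q,t)$; this is the classical fact that $\{e_\lambda(\xx)\}_{\lambda}$ is a $\QQ(q,t)$-basis of $\Lambda$ (see \cite{Macdonald}), so it is safe to invoke. One should also note that the argument is degree-by-degree compatible: if $F$ and $G$ are not homogeneous the equality of all $e_j^\perp$-images still forces equality, since the spanning argument above makes no homogeneity assumption. No infinite-sum convergence issues arise because in each fixed degree only finitely many $e_\lambda(\xx)$ are relevant. This completes the plan.
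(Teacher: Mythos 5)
Your argument is correct and uses essentially the same idea as the paper's proof: adjointness of $e_j(\xx)^\perp$ with multiplication by $e_j(\xx)$, combined with the fact that the $e_\lambda(\xx)$ span $\Lambda$ and the constant-term hypothesis. The paper phrases it by showing $\langle F, e_j e_\lambda \rangle = \langle G, e_j e_\lambda \rangle$ directly rather than setting $H = F - G$, but the content is identical (and, as you note yourself, the induction is unnecessary).
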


\begin{proof}
The forward direction is obvious.  For the reverse implication, observe that for any partition $\lambda$
and $j \geq 1$
we have
\begin{align}
\langle F(\xx), e_j(\xx) e_{\lambda}(\xx) \rangle &= \langle e_j(\xx)^{\perp} F(\xx), e_{\lambda}(\xx) \rangle \\
&= \langle e_j(\xx)^{\perp} G(\xx), e_{\lambda}(\xx) \rangle \\
&= \langle G(\xx), e_j(\xx) e_{\lambda}(\xx) \rangle,
\end{align}
so that $F(\xx) = G(\xx)$.
\end{proof}

To use Lemma~\ref{e-perp-determines-symmetric-function} to prove 
$\grFrob(R_{n,k}; q) = D_{n,k}(\xx;q)$, we will need to determine the images of 
$\grFrob(R_{n,k}; q)$ and
$D_{n,k}(\xx;q)$ under the operator $e_j(\xx)^{\perp}$ for $j \geq 1$.    The case of
$\grFrob(R_{n,k}; q)$ will be handled in Section~\ref{Frobenius series} after we develop
a better understanding of the algebraic combinatorics of  $R_{n,k}$.  The 
case of $D_{n,k}(\xx;q)$ can be treated now.

\begin{lemma}
\label{e-perp-image}
We have 
\begin{equation}
\label{e-perp-equation}
e_j(\xx)^{\perp} D_{n,k}(\xx;q) = 
q^{{j \choose 2}} {k \brack j}_q \cdot 
\sum_{m = \max(1,k-j)}^{\min(k,n-j)}  
q^{(k-m) \cdot (n-j-m)}  {j \brack k-m}_q  D_{n-j,m}(\xx;q).
\end{equation}
\end{lemma}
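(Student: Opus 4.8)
The plan is to reduce the statement to a clean identity for the symmetric functions $C_{n,k}(\xx;q)$ and then to prove that identity combinatorially. Set $N_{n,k} := (n-k)(k-1) + \binom{k}{2}$; this is the top $q$-degree of $C_{n,k}(\xx;q)$, being the maximum of $\inv$ over ordered multiset partitions of size $n$ with $k$ blocks (attained already at content $(1^n)$). Thus $\rev_q$ applied to $\omega\,C_{n,k}(\xx;q)$ is reversal relative to the \emph{fixed} degree $N_{n,k}$. Since reversal relative to a fixed degree commutes with every $\QQ[q]$-linear operator acting only on the $\xx$-variables, and since $\omega$ is a self-adjoint algebra involution so that $e_j(\xx)^\perp = \omega\circ h_j(\xx)^\perp\circ\omega$, we obtain
\begin{equation*}
e_j(\xx)^\perp D_{n,k}(\xx;q) = \rev_q^{(N_{n,k})}\!\bigl(\omega\, h_j(\xx)^\perp C_{n,k}(\xx;q)\bigr).
\end{equation*}
(Note that $h_j^\perp$ may lower the $q$-degree, so the reversal really must be taken with respect to $N_{n,k}$, not to the actual degree of the right side.) It therefore suffices to evaluate $h_j(\xx)^\perp C_{n,k}(\xx;q)$ and then apply $\rev_q^{(N_{n,k})}\circ\omega$.

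For the evaluation I would use the classical description of $h_j(\xx)^\perp F(\xx)$ as the coefficient of $u^j$ in $F(x_1,x_2,\dots,u)$, where $u$ is one newly adjoined variable, together with the expansion $C_{n,k}(\xx;q) = \sum_\mu q^{\inv(\mu)}\xx^{\cont(\mu)}$ from \eqref{fourways}, the sum over all ordered multiset partitions $\mu$ of size $n$ with $k$ blocks and positive integer entries. Adjoining $u$ as a new \emph{largest} letter, the coefficient of $u^j$ is this sum restricted to the $\mu$ using $u$ exactly $j$ times; since blocks are sets, those $j$ copies of $u$ lie in $j$ distinct blocks. Deleting all $u$'s and discarding blocks that become empty sends such a $\mu$ to an ordered multiset partition $\mu'$ of size $n-j$ with $m$ blocks, where $m = k - s$ and $s$ is the number of blocks of $\mu$ equal to the singleton $\{u\}$; also $\xx^{\cont(\mu)} = \xx^{\cont(\mu')}$. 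Because $u$ is largest it is never the minimum of a non-singleton block, so $\inv(\mu')$ captures exactly the inversions of $\mu$ between positive letters, while each occurrence of $u$ in the block at position $\ell$ contributes the number of surviving ($\mu'$-)blocks lying to its right; hence
\begin{equation*}
\inv(\mu) - \inv(\mu') = \sum_{\ell\in P}\#\{\, i\in Q : i>\ell \,\},
\end{equation*}
with $P\subseteq[k]$ the positions of the $u$-containing blocks and $Q = [k]\setminus A$, where $A$ is the set of $s$ positions of the $\{u\}$-singletons. The right side depends only on these position sets, so summing $q^{\inv(\mu)-\inv(\mu')}$ over all legal reinsertions of $u$ into a fixed $\mu'$ (choose $A$, then the $(j-s)$-subset $P\setminus A$ of old blocks that acquire a $u$) yields a polynomial $c_{k,j,m}(q)$ independent of $\mu'$ and of $n$. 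A short computation should give $c_{k,j,m}(q) = {k\brack j}_q{j\brack k-m}_q$, whence
\begin{equation*}
h_j(\xx)^\perp C_{n,k}(\xx;q) = {k\brack j}_q \sum_m {j\brack k-m}_q\, C_{n-j,m}(\xx;q),
\end{equation*}
the sum over $m$ truncating automatically to $\max(1,k-j)\le m\le\min(k,n-j)$.

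Finally, apply $\rev_q^{(N_{n,k})}\circ\omega$ to this formula term by term. Using $\omega\,C_{n-j,m}(\xx;q) = \rev_q^{(N_{n-j,m})}D_{n-j,m}(\xx;q)$ and the rule $\rev_q^{(N)}(fg) = q^{\,N-\deg f-\deg g}(\rev_q f)(\rev_q g)$, one obtains
\begin{equation*}
e_j(\xx)^\perp D_{n,k}(\xx;q) = \sum_m \bigl(\rev_q\, c_{k,j,m}\bigr)\, q^{\,N_{n,k}-\deg c_{k,j,m}-N_{n-j,m}}\, D_{n-j,m}(\xx;q).
\end{equation*}
Gaussian binomials are palindromic, so $\rev_q\,c_{k,j,m} = c_{k,j,m} = {k\brack j}_q{j\brack k-m}_q$; and a routine check of the polynomial identity $N_{n,k}-\deg c_{k,j,m}-N_{n-j,m} = \binom{j}{2}+(k-m)(n-j-m)$, with $\deg c_{k,j,m} = j(k-j)+(k-m)(j-k+m)$, turns the last display into exactly \eqref{e-perp-equation}.

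The step I expect to be the main obstacle is identifying $c_{k,j,m}(q) = {k\brack j}_q{j\brack k-m}_q$: this amounts to proving that $\sum_A q^{\sum_{\ell\in P}\#\{i\in[k]\setminus A:\,i>\ell\}}$, summed over all splittings of $[k]$ into colour classes $A$ (size $s=k-m$), $B = P\setminus A$ (size $j-s$), and $C$ (size $k-j$), equals the stated product of Gaussian binomials. Rewriting the exponent as a sum of cross-inversion counts among the three colour classes and arguing by a recursion on $k$ (or via a $q$-Vandermonde identity) should yield this, but getting the exponents to line up in concert with the several degree-reversal bookkeeping steps is the delicate part. A secondary point that must be nailed down is the claim $\deg_q C_{n,k}(\xx;q) = N_{n,k}$, on which the reduction in the first paragraph rests.
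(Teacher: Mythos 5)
Your route is genuinely different from the paper's: you transfer the problem to $C_{n,k}$ via $e_j^\perp = \omega\circ h_j^\perp\circ\omega$ and compute $h_j^\perp C_{n,k}$ by the elementary device $h_j^\perp F(\xx) = [u^j]\,F(\xx,u)$ together with the multiset-partition model of $C_{n,k}$ (adjoining $u$ as a new largest letter), then twist back with $\rev_q\circ\omega$. The paper instead works directly with $D_{n,k}$ and $\coinv$, invoking a superization fact to interpret $\langle D_{n,k},e_\beta\rangle$ as a shuffle count and then inserting $j$ distinct big letters constrained to appear in decreasing order in the reading word. Your version avoids the superization machinery (a plus) at the cost of the extra $\rev_q$ degree bookkeeping you yourself flag as delicate. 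Both insertion arguments are of the same flavor, so I would call the decomposition the same but the scaffolding genuinely different and somewhat more elementary.

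There is, however, a concrete computational error at exactly the step you marked as the main obstacle. Carrying out the three-coloring sum shows
\[
c_{k,j,m}(q)\;=\;q^{\binom{j-s}{2}}\,{k\brack s,\;j-s,\;k-j}_q\;=\;q^{\binom{j-k+m}{2}}\,{k\brack j}_q{j\brack k-m}_q,\qquad s=k-m,
\]
because the within-$B$ pairs ($B=P\setminus A$) contribute the constant $\binom{|B|}{2}=\binom{j-s}{2}$ to the exponent for every coloring, while the three cross-class contributions $\#(A<B)+\#(A<C)+\#(B<C)$ give the $q$-multinomial. (Check $k=j=3$, $m=2$: the sum is $q+q^2+q^3=q\,[3]_q$, not $[3]_q$.) Consequently $\rev_q c_{k,j,m}\ne c_{k,j,m}$ in general (it equals the Gaussian product, not $c$ itself), and $\deg c_{k,j,m}$ carries the extra $\binom{j-k+m}{2}$. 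With those two corrections the degree identity you wrote down becomes
\[
N_{n,k}-\deg c_{k,j,m}-N_{n-j,m}=\binom{j}{2}+(k-m)(n-j-m),
\]
which does hold, and the chain of equalities then produces \eqref{e-perp-equation} exactly. As written, your version of the identity (with $\deg c_{k,j,m}=j(k-j)+(k-m)(j-k+m)$) fails whenever $m\ge k-j+2$, so the ``routine check'' would have exposed the missing $q$-power. The framework is sound; only this factor needs to be restored.
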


\begin{proof}

General facts about Gessel fundamental quasisymmetric functions                 
and the superization of symmetric functions \cite{HHLRU}, \cite[pp. 99-101]{Hagbook} imply that (\ref{new}) is equivalent to
the statement that for any (strong) composition 
$\beta = (\beta _1, \ldots , \beta _p)$ of $n$ into positive parts
\begin{align}
\label{snew}
\langle D_{n,k}(\xx;q), e_{\beta _1}(\xx) \cdots e_{\beta _p}(\xx) \rangle =  
\sum_{\sigma \in \OP_{n,k} \atop \text{revword$(\sigma)$ is a $\beta$-shuffle}  } q^{\coinv (\sigma)}, 
\end{align}
where the sum is over all $\sigma$ whose reverse reading word $\text{revword}(\sigma)$ is a shuffle of the decreasing sequences 
$$
(\beta _{1} ,\ldots ,2,1),(\beta _1 + \beta _2, \ldots , \beta _1 +1), \ldots ,(n,n-1,\ldots , \beta _1+\ldots +\beta _{p-1}+1).
$$
Furthermore, letting $\beta_1=j$, this sum also equals
$\langle e_j^{\perp}(\xx) D_{n,k}(\xx;q), e_{\beta _2}(\xx) \cdots e_{\beta _p}(\xx) \rangle$.

We now give a combinatorial interpretation for the right-hand-side of (\ref{e-perp-equation}) 
(with the parameter $m$ there replaced by $k-r$).  
Given $0\le r \le k-1$ and 
$\sigma \in \OP_{n-j,k-r}$, let 
$T$ be a way of adding the $j$ elements of the set $\{n-j+1,\ldots ,n\}$ (hereafter referred to as {\em big} letters) 
to the blocks of
$\sigma$ to form an ordered set partition $\sigma ^{\prime}$ of $n$ with $k$ blocks, 
in such a way that the big letters occur in 
$\text{revword}(\sigma ^{\prime})$ in reverse order $n,n-1,\ldots n-j+1$.  For example, if $\sigma = (246\mid 15\mid 3)$ and $j=3$, one 
choice for $\sigma ^{\prime}$ would be $(2467 \mid  8 \mid  15 \mid  9 \mid  3)$, since then $\text{revword}(\sigma)= 391825467$.  Note that
the conditions on the big letters force each block to have at most one big letter, and that $T$ must have $r$ blocks consisting
of a single big letter.

Call the letters from $\{1,2,\ldots ,n-j\}$ {\em small} letters.  Furthermore let big letters which are minimal (non-minimal) in 
their block be denoted by $minb$ ($nminb$) letters, respectively, and
small letters which are minimal (non-minimal) in 
their block be denoted by $mins$ ($nmins$) letters, respectively.  Recalling the conditions (\ref{coinvdef}), in $T$
each $minb$ letter forms a coinversion pair with each 
$nmins$ letter, giving $r(n-j-(k-r))$ coinversions of this type.  Each of the $minb$ letters also form a coinversion pair with
each of the $nminb$ letters (giving $r(j-r)$ pairs), and with each of the $minb$ letters to its left (yielding
${r \choose 2}$ pairs).   In addition, each $minb$ letter forms
a coinversion pair with each $mins$ letter to its left;  as we sum over all ways of 
interleaving the $r$ blocks of $minb$ letters with the other $k-r$ blocks, leaving everything else in $T$ fixed, we generate a 
factor of ${k \brack r}_q$ from these pairs. 

Next note that each $nminb$ letter forms a coinversion pair with each $mins$ letter to its left.  It follows that if we ignore the $r$ blocks of
$minb$ letters, and sum over all ways to distribute the $nminb$ letters amongst the $k-r$ blocks with a $mins$ letter,
we generate a factor of $q^{{j-r \choose 2}}{k-r \brack j-r}_q$.   Since
\begin{equation*}
{r \choose 2} + r(j-r) + {j-r \choose 2} = {j \choose 2},
\end{equation*}
we have
\begin{align}
\label {rrr}
\sum_{T} q^{\text{coinv}(T)} =
q^{\text{coinv}(\sigma)} q^{r(n-j-k+r)+{j \choose 2}} {k \brack r}_q {k-r \brack j-r}_q\\
\notag
= q^{\text{coinv}(\sigma)} q^{r(n-j-k+r)+{j \choose 2}} {k \brack j}_q {j \brack r}_q.
\end{align} 
If we sum (\ref{rrr}) over all $\sigma \in \OP_{n-j,k-r}$, multiplying by the appropriate Gessel fundamental,
and also sum over $r$, we get the right hand side of (\ref{e-perp-equation}).  
Taking the scalar product of the resulting symmetric function 
with $e_{\beta_2}(\xx) \cdots e_{\beta _p}(\xx)$, again using the results on superization and (\ref{snew}), we get the
scalar product of the left hand side of (\ref{e-perp-equation}) with 
$e_{\beta_2}(\xx) \cdots e_{\beta _p}(\xx)$.
\end{proof}

\section{Hilbert series and generalized Artin basis}
\label{Hilbert series}

\subsection{The point sets $Y_{n,k}$}
Our  strategy for determining the Hilbert and Frobenius series of $R_{n,k}$ is inspired from the 
work of Garsia and Procesi \cite{GP}.  Garsia and Procesi used this method to determine
the Hilbert and Frobenius series of the cohomology of the Springer fiber $R_{\lambda}$
for $\lambda \vdash n$.  We  recall the method,  then apply it to our situation.

Let $Y \subset \QQ^n$ be any finite set of points and let $\II(Y) \subseteq \QQ[\xx_n]$ be the ideal of 
polynomials which vanish on $Y$:
\begin{equation}
\II(Y) = \{ f \in \QQ[\xx_n] \,:\, f(\yy) = 0 \text{ for all } \yy \in Y\}.
\end{equation}
We can think of the quotient $\QQ[Y] = \frac{\QQ[\xx_n]}{\II(Y)}$ as the ring of polynomial
functions $Y \longrightarrow \QQ$, and in particular we have
\begin{equation}
\dim \left( \frac{\QQ[\xx_n]}{\II(Y)} \right) = |Y|.
\end{equation}
Moreover, if $Y$ is stable under the action of any finite subgroup $G \subseteq GL_n(\QQ)$, we have
an isomorphism of $G$-modules 
\begin{equation}
 \frac{\QQ[\xx_n]}{\II(Y)}  \cong_G \QQ[Y].
\end{equation}

The ideal $\II(Y)$ is almost never homogeneous.  To produce a homogeneous ideal,
let $\tau(f)$ be the top degree component of any nonzero
polynomial $f \in \QQ[\xx_n]$.  That is, if 
$f = f_d + f_{d-1} + \cdots + f_0$ with each $f_i$ homogeneous of degree $i$ and $f_d \neq 0$,
we define $\tau(f) := f_d$.  Let $\TT(Y) \subseteq \QQ[\xx_n]$ be the ideal generated by the top degree
components of all of the polynomials in $\II(Y)$:
\begin{equation}
\TT(Y) := \langle \tau(f) \,:\, f \in \II(Y) - \{0\} \rangle.
\end{equation}

The ideal $\TT(Y)$ is homogeneous by definition.  It can also be shown (see \cite{GP}) that any 
set of homogeneous polynomials in $\QQ[\xx_n]$ which descends to a basis of 
$\frac{\QQ[\xx_n]}{\II(Y)}$ also descends to a basis of
$\frac{\QQ[\xx_n]}{\TT(Y)}$.  In particular, we have
\begin{equation}
\dim \left( \frac{\QQ[\xx_n]}{\TT(Y)} \right) = \dim \left( \frac{\QQ[\xx_n]}{\II(Y)} \right) = |Y|.
\end{equation}
Moreover, if $Y$ is stable under the action of a finite subgroup $G \subseteq GL_n(\QQ)$,
we have an isomorphism of $G$-modules
\begin{equation}
\frac{\QQ[\xx_n]}{\TT(Y)}  \cong_G \frac{\QQ[\xx_n]}{\II(Y)}  \cong_G \QQ[Y].
\end{equation}

Our strategy is as follows.
\begin{enumerate}
\item  Find a finite point set $Y_{n,k} \subset \QQ^n$ which is stable under the action
of $\symm_n$ such that there is a $\symm_n$-equivariant bijection between $Y_{n,k}$
and $\OP_{n,k}$.
\item  Prove that $I_{n,k} \subseteq \TT(Y_{n,k})$ by showing that the generators of
$I_{n,k}$ arise as
top degree components 
of  polynomials $f \in \II(Y_{n,k})$ vanishing on $Y_{n,k}$.
\item  Prove that 
\begin{equation*}
\dim(R_{n,k}) = \dim \left( \frac{\QQ[\xx_n]}{I_{n,k}} \right) \leq |\OP_{n,k}| =
\dim \left( \frac{\QQ[\xx_n]}{\TT(Y_{n,k})}  \right)
\end{equation*}
and use the relation $I_{n,k} \subseteq \TT(Y_{n,k})$ to conclude that
$I_{n,k} = \TT(Y_{n,k})$.
\end{enumerate}
The execution of this strategy will show that $\dim(R_{n,k}) = |\OP_{n,k}|$ and 
$R_{n,k} \cong \QQ[\OP_{n,k}]$ as ungraded $\symm_n$-modules.

To achieve Step 1 of our strategy, we introduce the following point sets.

\begin{defn}
Fix distinct rational numbers $\alpha_1, \dots, \alpha_k \in \QQ$.  Let $Y_{n,k} \subset \QQ^n$
be the set of points 
with coordinates occurring in $\{\alpha_1, \dots, \alpha_k\}$ such that each $\alpha_i$ appears
at least once.  In other words,
\begin{equation*}
Y_{n,k} := \{(y_1, \dots, y_n) \in \QQ^n \,:\, \{ \alpha_1, \dots , \alpha_k \} = \{y_1, \dots, y_n \} \}.
\end{equation*}
\end{defn}

There is a bijection from $Y_{n,k}$ and $\OP_{n,k}$ obtained by sending 
$(y_1, \dots, y_n)$ to $(B_1 | \cdots | B_k)$, where $B_i = \{ j \,:\, y_j = \alpha_i \}$.  For example, 
we have
\begin{equation*}
(\alpha_3, \alpha_2, \alpha_2, \alpha_1, \alpha_3) \leftrightarrow (4 \mid 23 \mid 15).
\end{equation*}
It is evident that this bijection is $\symm_n$-equivariant, which implies
\begin{equation}
\QQ[Y_{n,k}] \cong_{\symm_n} \QQ[\OP_{n,k}].
\end{equation}

Lemma~\ref{vanishing-lemma} allows us to achieve Step 2 of our strategy right away.

\begin{lemma}
\label{I-contained-in-T}
We have $I_{n,k} \subseteq \TT(Y_{n,k})$.
\end{lemma}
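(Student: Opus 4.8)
The plan is to show that each generator of $I_{n,k}$ is the top-degree component of some polynomial in $\II(Y_{n,k})$, which immediately yields $I_{n,k} \subseteq \TT(Y_{n,k})$ since $\TT(Y_{n,k})$ is an ideal containing all such top-degree components. There are two types of generators to handle: the variable powers $x_i^k$ and the elementary symmetric functions $e_r(\xx_n)$ for $n-k+1 \leq r \leq n$.

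First I would dispatch the variable powers. Fix $i$ and consider the univariate polynomial $p(x_i) := \prod_{j=1}^k (x_i - \alpha_j)$, regarded as an element of $\QQ[\xx_n]$. Every point $\yy = (y_1,\dots,y_n) \in Y_{n,k}$ has $y_i \in \{\alpha_1,\dots,\alpha_k\}$, so $p$ vanishes on $Y_{n,k}$, i.e. $p \in \II(Y_{n,k})$. Since $p$ has degree $k$ with leading term $x_i^k$, we get $\tau(p) = x_i^k \in \TT(Y_{n,k})$.

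Next I would handle the elementary symmetric functions. For a point $\yy \in Y_{n,k}$, the multiset $\{y_1,\dots,y_n\}$ equals $\{\alpha_1,\dots,\alpha_k\}$ as a set, so Lemma~\ref{vanishing-lemma} applies with $\beta_i = y_i$: for any $n-k+1 \leq r \leq n$,
\begin{equation*}
\sum_{j=0}^r (-1)^j e_{r-j}(y_1,\dots,y_n) h_j(\alpha_1,\dots,\alpha_k) = 0.
\end{equation*}
Therefore the polynomial $f_r := \sum_{j=0}^r (-1)^j h_j(\alpha_1,\dots,\alpha_k)\, e_{r-j}(\xx_n) \in \QQ[\xx_n]$ lies in $\II(Y_{n,k})$ (the $h_j(\alpha_1,\dots,\alpha_k)$ are scalars). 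The $j = 0$ term is $e_r(\xx_n)$, which is homogeneous of degree $r$; all other terms $e_{r-j}(\xx_n)$ with $j \geq 1$ have strictly smaller degree $r - j < r$. Hence $\tau(f_r) = e_r(\xx_n)$, so $e_r(\xx_n) \in \TT(Y_{n,k})$ for each $n-k+1 \leq r \leq n$.

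Since $\TT(Y_{n,k})$ is an ideal containing $x_1^k,\dots,x_n^k$ and $e_{n-k+1}(\xx_n),\dots,e_n(\xx_n)$ — the full generating set of $I_{n,k}$ — we conclude $I_{n,k} \subseteq \TT(Y_{n,k})$. There is essentially no obstacle here: the content of the argument is entirely packaged in Lemma~\ref{vanishing-lemma}, and the remaining verification is the elementary observation that the extra terms in $f_r$ and in $p$ have strictly lower degree than the generator we want.
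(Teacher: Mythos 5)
Your proof is correct and follows essentially the same approach as the paper: for each generator of $I_{n,k}$ you exhibit a polynomial in $\II(Y_{n,k})$ whose top-degree component is that generator, using $\prod_{j=1}^k(x_i-\alpha_j)$ for $x_i^k$ and the alternating sum from Lemma~\ref{vanishing-lemma} for $e_r(\xx_n)$. The only difference is cosmetic ordering of the two cases.
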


\begin{proof}
For $n-k+1 \leq r \leq n$, Lemma~\ref{vanishing-lemma} guarantees that
\begin{equation}
\sum_{j = 0}^r (-1)^j h_j(\alpha_1, \dots, \alpha_k) e_{r-j}(\xx_n) \in \II(Y_{n,k}).
\end{equation}
Taking the top degree component, we get
$e_r(\xx_n) \in \TT(Y_{n,k})$.
Moreover, since the coordinates of points in $Y_{n,k}$ lie in the set $\{\alpha_1, \dots, \alpha_k\}$,
for $1 \leq i \leq n$ we have
\begin{equation}
(x_i - \alpha_1) \cdots (x_i - \alpha_k) \in \II(Y_{n,k}).
\end{equation}
Taking the top degree component, we get
$x_i^k \in \TT(Y_{n,k})$.  We conclude that $I_{n,k} \subseteq \TT(Y_{n,k})$.
\end{proof}

Step 3 of our strategy will require more work.  
We aim to prove the dimension inequality
$\dim \left( \frac{\QQ[\xx_n]}{I_{n,k}} \right) \leq |\OP_{n,k}|$.
To do so, we will use Gr\"obner theory.

\subsection{Skip and nonskip monomials}
Let $<$ be the lexicographic monomial
 order.  We know that 
\begin{equation}
\dim \left( \frac{\QQ[\xx_n]}{I_{n,k}} \right) =
\dim \left( \frac{\QQ[\xx_n]}{\initial_<(I_{n,k})} \right).
\end{equation}
Moreover, we know that a basis for $\frac{\QQ[\xx_n]}{\initial_<(I_{n,k})}$ is given by
\begin{equation}
\{ m + I_{n,k} \,:\, \text{$m \in \QQ[\xx_n]$ is a monomial and $\initial_<(f) \nmid m$ for all
$f \in I_{n,k} - \{0\}$} \}.
\end{equation}
Our aim is to bound the size of this set above by $|\OP_{n,k}|$.  To do this, we will
calculate $\initial_<(f)$ for some strategically chosen $f \in I_{n,k}$.

\begin{lemma}
\label{skip-monomials-in-initial}
Let $k \leq n$.  For any $S \subseteq [n]$ of size $|S| = n-k+1$, the reverse Demazure character
$\kappa_{\gamma(S)^*}(\xx_n^*)$ satisfies
$\kappa_{\gamma(S)^*}(\xx_n^*) \in I_{n,k}$.  In particular, we have 
\begin{equation*}
\xx(S) \in \initial_<(I_{n,k}).
\end{equation*}
Moreover, for $1 \leq i \leq n$ we have
\begin{equation*}
x_i^k \in \initial_<(I_{n,k}).
\end{equation*}
\end{lemma}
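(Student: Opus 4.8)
The plan is to derive the membership $\kappa_{\gamma(S)^*}(\xx_n^*) \in I_{n,k}$ directly from Lemma~\ref{demazure-identity}, and then read off the two initial-ideal statements as formal consequences. First I would invoke Equation~\ref{magical-demazure-equations-friend},
\[
\kappa_{\gamma(S)^*}(\xx_n^*) = \sum_{\lambda} (-1)^{|\lambda|}\, \kappa_{\overline{\gamma(S)^*} - \lambda}(\xx_n^*)\, e_{n-k+1+|\lambda|}(\xx_n),
\]
where $\lambda$ ranges over LL collections for $\overline{\gamma(S)^*}$. Since $I_{n,k}$ is an ideal and each summand is a polynomial multiple of $e_{n-k+1+|\lambda|}(\xx_n)$, it suffices to check that every degree $r = n-k+1+|\lambda|$ that actually occurs lies in the window $n-k+1 \le r \le n$, so that $e_r(\xx_n)$ is one of the declared generators of $I_{n,k}$. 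The lower inequality is immediate from $|\lambda| \ge 0$, so the whole point is the bound $|\lambda| \le k-1$.

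The bound $|\lambda|\le k-1$ comes from controlling the number of non-basement rows of the skyline diagram of $\overline{\gamma(S)^*}$. Writing $S = \{s_1 < \cdots < s_{n-k+1}\}$, the nonzero parts of $\gamma(S)$ are $\gamma(S)_{s_j} = s_j - j + 1$; since $S$ has $n-k+1-j$ elements exceeding $s_j$, all lying in $[n]$, we get $s_j \le k-1+j$ and hence $\gamma(S)_{s_j} \le k$ for every $j$. Therefore every part of $\gamma(S)$, and so of its reversal $\gamma(S)^*$, is at most $k$, and after decrementing every part of $\overline{\gamma(S)^*}$ is at most $k-1$. Thus the skyline diagram of $\overline{\gamma(S)^*}$ has at most $k-1$ non-basement rows, and since an LL collection contains at most one cell in each row (condition (2) of the definition of LL), $|\lambda|\le k-1$. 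Consequently every summand above is a polynomial multiple of one of the generators $e_{n-k+1}(\xx_n),\ldots,e_n(\xx_n)$ of $I_{n,k}$, whence $\kappa_{\gamma(S)^*}(\xx_n^*) \in I_{n,k}$.

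The remaining assertions are then purely formal. Because $\kappa_{\gamma(S)^*}(\xx_n^*)$ is a nonzero element of $I_{n,k}$, its leading monomial lies in $\initial_<(I_{n,k})$; and by Lemma~\ref{reduced-demazure-lemma} that leading monomial is exactly $\xx(S)$, giving $\xx(S) \in \initial_<(I_{n,k})$. For the variable powers, $x_i^k$ is literally one of the defining generators of $I_{n,k}$, so it is a nonzero element of $I_{n,k}$ equal to its own leading monomial, and hence $x_i^k \in \initial_<(I_{n,k})$.

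I do not expect a genuine obstacle here: once Lemmas~\ref{demazure-identity} and~\ref{reduced-demazure-lemma} are available, the only thing to verify is the degree window, which reduces to the elementary height estimate above. All the real difficulty in this circle of ideas is already spent inside the proof of Lemma~\ref{demazure-identity} and its sign-reversing involution on partisan skylines.
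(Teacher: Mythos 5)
Your proposal is correct and follows the same route as the paper: the membership $\kappa_{\gamma(S)^*}(\xx_n^*) \in I_{n,k}$ is read off from Equation~\eqref{magical-demazure-equations-friend}, the identification $\initial_<(\kappa_{\gamma(S)^*}(\xx_n^*)) = \xx(S)$ comes from Lemma~\ref{reduced-demazure-lemma}, and $x_i^k$ is a generator of $I_{n,k}$. The paper's own proof is just those three sentences with no further justification.

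The one place you add substance is the height estimate showing $|\lambda| \leq k-1$, which is correct (the bound $s_j \leq k-1+j$ gives $\gamma(S)_{s_j} \leq k$, hence $\overline{\gamma(S)^*}$ has columns of height at most $k-1$, and LL condition (2) gives at most one cell per row). However, this bound is not actually needed to conclude membership in $I_{n,k}$: for $r > n$ one has $e_r(\xx_n) = 0$, so any summand with $|\lambda| \geq k$ vanishes outright, and the remaining summands have $n-k+1 \leq n-k+1+|\lambda| \leq n$ automatically. The paper silently relies on this; your proof instead shows directly that no such large-$\lambda$ terms occur in the first place. Both are fine, and your version arguably makes the argument more self-contained, at the cost of a paragraph that could be replaced by the one-line remark that $e_r(\xx_n)=0$ for $r>n$.
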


\begin{proof}
Lemma~\ref{demazure-identity} 
(and in particular Equation~\ref{magical-demazure-equations-friend})
implies that $\kappa_{\gamma(S)^*}(\xx_n^*) \in I_{n,k}$.
By Lemma~\ref{reduced-demazure-lemma}, taking the lexicographic leading term
shows that
\begin{equation*}
\initial_<(\kappa_{\gamma(S)^*}(\xx_n^*)) = \xx(S) \in \initial_<(I_{n,k}).
\end{equation*}
Since $x_i^k \in I_{n,k}$, we have $x_i^k \in \initial_<(I_{n,k}).$
\end{proof}

It will turn out that the leading monomials furnished by
Lemma~\ref{skip-monomials-in-initial} are all we need.
We give the collection of monomials which are not divisible by any of these monomials
a name.

\begin{defn}
\label{nonskip-definition}
Let $k \leq n$.  A monomial $m \in \QQ[\xx_n]$ is 
{\em $(n,k)$-nonskip} if 
\begin{enumerate}
\item  $\xx(S) \nmid m$ for all $S \subseteq [n]$ with $|S| = n-k+1$ and
\item  $x_i^k \nmid m$ for all $1 \leq i \leq n$.
\end{enumerate}
Let $\MMM_{n,k}$ denote the set of all $(n,k)$-nonskip monomials in $\QQ[\xx_n]$.
\end{defn}

There is some redundancy in Definition~\ref{nonskip-definition}.  If $n \in S$ and 
$|S| = n-k+1$, the skip monomial $\xx(S)$ contains the variable power $x_n^k$.
It is therefore enough to consider those subsets $S$ with $|S| = n-k+1$
and $n \notin S$.

We will prove that $|\MMM_{n,k}| = |\OP_{n,k}|$.  This will imply that 
$R_{n,k} \cong \QQ[\OP_{n,k}]$.
We will also show that the degree statistic on $\MMM_{n,k}$ is equidistributed with $\coinv$
on $\OP_{n,k}$.  This will imply that
$\Hilb(R_{n,k}; q) = \rev_q( [k]!_q \cdot \Stir_q(n,k) )$.  Finally, we will show 
that $\MMM_{n,k} = \AAA_{n,k}$, proving that generalized Artin monomials descend to a basis
of $R_{n,k}$.

The proofs of the claims in the last paragraph will involve a combinatorial analysis of skip
and nonskip monomials.  A key observation regarding skip monomials and divisibility is as follows.

\begin{lemma}
\label{skip-monomial-union}
Let $m \in \QQ[\xx_n]$ be a monomial and let $S, T \subseteq [n]$.  If 
$\xx(S) \mid m$ and $\xx(T) \mid m$, then $\xx(S \cup T) \mid m$.
\end{lemma}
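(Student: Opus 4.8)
The plan is to reduce the statement to a coordinatewise comparison of exponents. Recall that for monomials $m', m''$ in $\QQ[\xx_n]$ one has $m' \mid m''$ if and only if the exponent of $x_i$ in $m'$ is at most the exponent of $x_i$ in $m''$ for every $i$. Write $\deg_i(-)$ for the exponent of $x_i$ in a monomial. The hypotheses say $\deg_i(\xx(S)) \le \deg_i(m)$ and $\deg_i(\xx(T)) \le \deg_i(m)$ for all $i$, so it suffices to prove the purely combinatorial inequality
\[
\deg_i(\xx(S \cup T)) \le \max\bigl(\deg_i(\xx(S)),\, \deg_i(\xx(T))\bigr) \qquad \text{for all } 1 \le i \le n,
\]
since the right-hand side is then $\le \deg_i(m)$.

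First I would record a closed formula for these exponents directly from the definition of the skip monomial. If $i \notin U$ then $\deg_i(\xx(U)) = 0$; and if $i$ is the $j$-th smallest element of a set $U \subseteq [n]$, then by definition $\deg_i(\xx(U)) = i - j + 1$. Since $j = |U \cap \{1,\dots,i-1\}| + 1$ whenever $i \in U$, this rewrites as
\[
\deg_i(\xx(U)) = i - \bigl|U \cap \{1, \dots, i-1\}\bigr| \qquad (i \in U).
\]
The key observation is that this quantity is \emph{anti-monotone} in $U$: if $i \in U \subseteq U'$, then $U \cap \{1,\dots,i-1\} \subseteq U' \cap \{1,\dots,i-1\}$, hence $\deg_i(\xx(U')) \le \deg_i(\xx(U))$. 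Intuitively, inserting extra indices below $i$ only shrinks the exponent of $x_i$ in the skip monomial (even though the total degree grows).

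With this in hand the conclusion is immediate. Fix $i$. If $i \notin S \cup T$ then $\deg_i(\xx(S\cup T)) = 0$ and there is nothing to prove. Otherwise $i$ lies in $S$ or in $T$; say $i \in S$, the other case being symmetric. Then $i \in S \subseteq S \cup T$, so anti-monotonicity gives $\deg_i(\xx(S\cup T)) \le \deg_i(\xx(S)) \le \deg_i(m)$, which completes the argument. There is no substantial obstacle here; the only points to get right are the closed form for $\deg_i(\xx(U))$ and the (slightly counterintuitive) direction of the monotonicity, namely that enlarging the index set \emph{decreases} the individual variable exponents of the associated skip monomial.
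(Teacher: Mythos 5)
Your proof is correct and takes essentially the same approach as the paper: reduce to the coordinatewise exponent comparison, note the closed form $\deg_i(\xx(U)) = i - |U \cap \{1,\dots,i-1\}|$ for $i \in U$, and use that enlarging $U$ can only decrease this quantity. You simply make the anti-monotonicity observation explicit before applying it, but the underlying argument is the same.
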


\begin{proof}
Let $i \in S$.  Then
\begin{align*}
\text{exponent of $x_i$ in $\xx(S \cup T)$} 
&= i - | (S \cup T) \cap \{1, 2, \dots, i-1\} | \\
&\leq i - |S \cap \{1, 2, \dots, i-1\}| \\
&= \text{exponent of $x_i$ in $\xx(S)$.} 
\end{align*}
Similarly, if $j \in T$ then the exponent of $x_j$ in $\xx(S \cup T)$ is $\leq$ the exponent of 
$x_j$ in $\xx(T)$.  The lemma follows.
\end{proof}

An immediate useful consequence of Lemma~\ref{skip-monomial-union} is:

\begin{lemma}
\label{canonical-skip-union}
Let $m \in \QQ[\xx_n]$ be a monomial and let $r$ be a positive integer such that 
$\xx(S) \mid m$ for some $S \subseteq [n]$ with $|S| = r$ but there does not exist 
$T \subseteq [n]$ with $|T| > r$ such that $\xx(T) \mid m$.

Then there exists a {\em unique} $S \subseteq [n]$ with $|S| = r$ such 
that $\xx(S) \mid m$.
\end{lemma}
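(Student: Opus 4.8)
The plan is to obtain this as an immediate corollary of Lemma~\ref{skip-monomial-union}, with essentially no additional work beyond bookkeeping of set sizes. Existence of at least one $S$ with $|S| = r$ and $\xx(S) \mid m$ is part of the hypothesis, so the only thing to prove is uniqueness.

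To prove uniqueness, I would start from two subsets $S, S' \subseteq [n]$, each of size $r$, with $\xx(S) \mid m$ and $\xx(S') \mid m$, and show $S = S'$. Applying Lemma~\ref{skip-monomial-union} to the pair $S, S'$ gives $\xx(S \cup S') \mid m$. Now observe that $S \subseteq S \cup S'$ forces $|S \cup S'| \geq |S| = r$. On the other hand, the hypothesis forbids the existence of any $T \subseteq [n]$ with $|T| > r$ and $\xx(T) \mid m$; taking $T = S \cup S'$, this rules out $|S \cup S'| > r$. Hence $|S \cup S'| = r$, and combined with $S \subseteq S \cup S'$ and $|S| = r = |S \cup S'|$ we conclude $S = S \cup S'$. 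Symmetrically $S' = S \cup S'$, so $S = S'$, which is the desired uniqueness.

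I do not expect any genuine obstacle here: the entire combinatorial content (that divisibility by two skip monomials implies divisibility by the skip monomial of the union) is already packaged in Lemma~\ref{skip-monomial-union}, and the remaining argument is the trivial chain of inequalities $r = |S| \leq |S \cup S'| \leq r$ together with the observation that a set contained in another set of the same finite cardinality must equal it. The one point worth stating carefully in the write-up is exactly why $|S \cup S'| > r$ is impossible, namely that $S \cup S'$ would itself be a witness contradicting the maximality assumption on $r$ built into the statement.
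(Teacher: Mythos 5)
Your proof is correct and takes essentially the same approach as the paper: both invoke Lemma~\ref{skip-monomial-union} to get $\xx(S \cup S') \mid m$ and then derive a contradiction with the maximality of $r$ when $S \neq S'$. Your write-up merely spells out the cardinality bookkeeping a bit more explicitly than the paper does.
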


\begin{proof}
If there were two such sets $S \neq S'$, Lemma~\ref{skip-monomial-union} would force
$\xx(S \cup S') \mid m$, contrary to the assumption on $r$.
\end{proof}

For any subset $S = \{i_1, \dots , i_r\} \subseteq [n]$, let 
$\mm(S) := x_{i_1} \cdots x_{i_r} \in \QQ[\xx_n]$ be the product of the corresponding variables.
For example, we have $\mm(245) = x_2 x_4 x_5$.
The following existence-uniqueness type result will be crucial in bijecting 
$\MMM_{n,k}$ with $\OP_{n,k}$.

\begin{lemma}
\label{canonical-skip}
Let $k \leq n$ and let $m \in \MMM_{n,k}$.
There exists a unique
set $S \subseteq [n]$ with $|S| = n-k$ such that
\begin{enumerate}
\item  $\xx(S) \mid (\mm(S) \cdot m)$, and
\item  $\xx(U) \nmid (\mm(S) \cdot m)$ for all $U \subseteq [n]$ with $|U| = n-k+1$.
\end{enumerate}
\end{lemma}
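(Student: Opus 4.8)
The plan is to translate conditions (1) and (2) into inequalities on the exponent vector of $m$, to single out a canonical candidate for $S$ by a maximality principle, and then to verify that candidate. Write $m = x_1^{a_1}\cdots x_n^{a_n}$, and for a set $X\subseteq[n]$ write $X_j$ for its $j$-th smallest element. Unwinding the definition of the skip monomial, condition (1) says precisely that, with $|S|=n-k$, one has $a_{S_j}\ge S_j-j$ for all $j$; call a subset $S\subseteq[n]$ with $|S|=n-k$ \emph{good} if it satisfies these inequalities. Similarly, for a set $U$ of size $n-k+1$, the divisibility $\xx(U)\mid\mm(S)\cdot m$ holds iff $a_{U_j}\ge U_j-j+1$ for every $j$ with $U_j\notin S$ and $a_{U_j}\ge U_j-j$ for every $j$ with $U_j\in S$, so condition (2) asks that no such $U$ exist. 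The set $\{1,2,\dots,n-k\}$ is trivially good, and good sets are closed under coordinatewise maximum: if $S,S'$ are good then the set $S\vee S'$ with $(S\vee S')_j=\max(S_j,S'_j)$ is good, by an exponent comparison in the spirit of Lemma~\ref{skip-monomial-union}. Hence there is a unique coordinatewise-largest good set $S_{\max}$, and it satisfies condition (1).

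For uniqueness I would argue as follows. If $S$ satisfies (1) and (2) then $S$ is good, so $S_j\le(S_{\max})_j$ for every $j$; if $S\ne S_{\max}$, let $j_0$ be the largest index with $S_{j_0}<(S_{\max})_{j_0}$ and let $U$ be $S$ with the element $(S_{\max})_{j_0}$ inserted, so $|U|=n-k+1$. The inserted element lies strictly between $S_{j_0}$ and $S_{j_0+1}$ (or is the new maximum), hence is not in $S$; a short computation using goodness of $S$ on the original elements of $U$ and goodness of $S_{\max}$ on the inserted element then shows $\xx(U)\mid\mm(S)\cdot m$, contradicting (2). Thus $S=S_{\max}$, so at most one set works.

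The substantive step, and the one I expect to be the real obstacle, is to show that $S_{\max}$ itself satisfies (2). Suppose not, so $\xx(U)\mid\mm(S_{\max})\cdot m$ for some $U$ of size $n-k+1$. Deleting the largest element of $U$ produces a good set, so $U_j\le(S_{\max})_j$ for $j\le n-k$, and therefore $|S_{\max}\cap[i]|\le|U\cap[i]|$ for all $i$. Call a position $j$ of $U$ \emph{deficient} if $a_{U_j}<U_j-j+1$ (the obstruction to $\xx(U)\mid m$ at that coordinate). Combining the inequalities characterizing $\xx(U)\mid\mm(S_{\max})\cdot m$, the goodness of $S_{\max}$, and the counting inequality $|S_{\max}\cap[i]|\le|U\cap[i]|$, I would show that a deficient position $j$ forces $U_j=(S_{\max})_j\in S_{\max}$ — the ``boost'' supplied by $\mm(S_{\max})$ only helps at positions of $S_{\max}$, and goodness of $S_{\max}$ already covers every position of $S_{\max}$ that sits earlier in $S_{\max}$ than it does in $U$. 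Since $m$ is $(n,k)$-nonskip we have $\xx(U)\nmid m$, so some position is deficient; let $j_0$ be the largest one. Then $\hat S:=U\setminus\{U_{j_0}\}$ has size $n-k$, and because every position of $U$ beyond $j_0$ is non-deficient, $\hat S$ is good. But $\hat S_{j_0}=U_{j_0+1}>U_{j_0}=(S_{\max})_{j_0}$, so the good set $\hat S$ strictly exceeds $S_{\max}$ in coordinate $j_0$, contradicting the maximality of $S_{\max}$. Hence $S_{\max}$ satisfies (2), and $S=S_{\max}$ is the unique set the lemma demands.

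Everything except the existence step is bookkeeping once the reformulation is set up; the delicate point is the exact description of the deficient positions in the last paragraph and the way it converts ``$\xx(U)$ divides the boosted monomial'' into a good set that beats $S_{\max}$.
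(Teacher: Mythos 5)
Your proof is correct and follows essentially the same strategy as the paper's: identify the extremal good set (the paper's lexicographically final member of $\CCC$ coincides with your coordinatewise maximum $S_{\max}$, since good sets are closed under coordinatewise max) and then derive both parts of the lemma by contradiction through constructing auxiliary subsets. The fine details differ slightly, and yours is arguably the more careful version: in the paper's existence step, the claim that swapping $t_0$ into $S_0$ lands back in $\CCC$ actually requires taking $t_0$ to be the \emph{minimal} element of $T\setminus S_0$ exceeding $\min(S_0)$ (a point the paper leaves implicit), whereas your deficient-position bookkeeping pins down the right index $j_0$ unambiguously — though the counting inequality $|S_{\max}\cap[i]|\le|U\cap[i]|$ you invoke along the way turns out to be superfluous, since the coordinatewise bound $U_j\le(S_{\max})_j$ together with goodness of $S_{\max}$ already forces $U_j=(S_{\max})_j$ at any deficient $j$.
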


\begin{proof}
We start with uniqueness.  Suppose $S = \{s_1 < \cdots < s_{n-k} \}$ and
$T = \{t_1 < \cdots < t_{n-k} \}$ were two distinct such sets.  Let $r$ be such that
$s_1 = t_1, \dots , s_{r-1} = t_{r-1},$ and $s_r \neq t_r$.  Without loss of generality we have
$s_r < t_r$.  Let $U = \{s_1 < \cdots < s_r < t_r < t_{r+1} < \cdots < t_{n-k} \}$.  Since
$\xx(S) \mid (\mm(S) \cdot m)$ and $\xx(T) \mid (\mm(T) \cdot m)$, it follows that
$\xx(U) \mid (\mm(S) \cdot m)$ and $|U| = n-k+1$.

To prove existence, consider the collection $\CCC$ of sets
\begin{equation}
\CCC := \{ S \subseteq [n] \,:\, |S| = n-k \text{ and } \xx(S) \mid (\mm(S) \cdot m) \}.
\end{equation}
The collection $\CCC$ is nonempty; indeed, we have
$\{1, 2, \dots, n-k\} \in \CCC$.  Let $S_0 \in \CCC$ be the lexicographically {\em final}
subset in $\CCC$.  We argue that $\mm(S_0) \cdot m$ satisfies Condition 2 in the statement
of the lemma, thus finishing the proof of the lemma.

Let $T \subseteq [n]$ have size $|T| = n-k+1$.  Working towards a contradiction, suppose
$\xx(T) \mid (\mm(S_0) \cdot m)$.  If there were an element $t \in T$ with $t < \min(S_0)$, then
the relation $\xx(S_0) \mid (\mm(S_0) \cdot m)$ would imply
$\xx(\{t\} \cup S_0) \mid m$, a contradiction.  Since $|T| > |S|$, there exists an element $t_0 \in T - S_0$
with $t_0 > \min(S_0)$.  Write 
$S_0 \cup \{t_0\} = \{s_1 < \cdots < s_r < t_0 < s_{r+1} < \cdots < s_{n-k} \}$.  If we define
$S'_0$ to be the set 
\begin{equation*}
S'_0 := \{s_1 < \cdots s_{r-1} < t_0 < s_{r+1} < \cdots < s_{n-k} \},
\end{equation*}
then $S'_0$ would come after $S_0$ in lexicographic order but we would have
$S'_0 \in \CCC$, contradicting our choice of $S_0$.
\end{proof}

To see how Lemma~\ref{canonical-skip} and its proof work, consider $(n,k) = (7,4)$.
It can be checked that $m = x_1^3 x_4^2 x_5^3 x_6^3 \in \MMM_{7,4}$.
We have
\begin{equation*}
\CCC = \{ S \subseteq [7] \,:\, |S| = 7-4 \text{ and } \xx(S) \mid (\mm(S) \cdot m) \} 
= \{123, 124, 125, 126,  145, 146, 156\}.
\end{equation*}
However, we also have 
\begin{center}
$\begin{array}{ccc}
\xx(1234) \mid (\mm(123) \cdot m), & \xx(1245) \mid (\mm(124) \cdot m), & \xx(1245) \mid (\mm(125) \cdot m), \\
\xx(1245) \mid (\mm(126) \cdot m), & \xx(1456) \mid (\mm(145) \cdot m), & \xx(1456) \mid (\mm(146) \cdot m).
\end{array}$
\end{center}
If $U \subseteq [7]$ and $|U| = 4$, then $\xx(U) \nmid (\mm(156) \cdot m)$, so that $156$ is the set $S$ guaranteed
by Lemma~\ref{canonical-skip}.  Observe that $S$ is the lexicographically final set in $\CCC$
and that $\mm(156) \cdot m = x_1^4 x_4^2 x_5^4 x_6^4 \notin \MMM_{7,4}$ (because there are variable powers
$\geq 4$).

\subsection{Nonskip monomials, ordered set partitions, and stars and bars}  Let $P_{n,k}(q)$ be the 
generating function for the $\coinv$ statistic on $\OP_{n,k}$ and let $M_{n,k}(q)$ be the 
generating function for the degree statistic on $\MMM_{n,k}$:
\begin{equation}
P_{n,k}(q) := \sum_{\sigma \in \OP_{n,k}} q^{\coinv(\sigma)}, \hspace{0.5in}
M_{n,k}(q) := \sum_{m \in \MMM_{n,k}} q^{\deg(m)}.
\end{equation}
We will prove that $P_{n,k}(q) = M_{n,k}(q)$ by 
finding a recursive bijection 
$\Psi: \OP_{n,k} \longrightarrow \MMM_{n,k}$ with the property that
for any $\sigma \in \OP_{n,k}$ we have
$\coinv(\sigma) = \deg(\Psi(\sigma))$.

At the level of ordered set partitions, the recursion on which $\Psi$ is based is as follows.
Suppose we are given an ordered set partition $\sigma \in \OP_{n,k}$ and we wish to 
build an ordered set partition of size $n+1$.  There are two ways we could insert $n+1$ into
$\sigma = (B_1 \mid \cdots \mid B_k)$:
\begin{enumerate}
\item  insert $n+1$ into one of the $k$ blocks $B_1, \dots, B_k$, or
\item  insert  $n+1$ as a singleton block in one of the $k+1$ spaces on either side of $B_1, \dots, B_k$.
\end{enumerate}
Following Remmel and Wilson \cite{RW}, we will call an insertion of Type 1 a {\em star insertion}
and an insertion of Type 2 a {\em bar insertion}.  
Performing a star insertion results in an element of $\OP_{n+1,k}$ 
(thus adding a star to the star model of $\sigma$)
whereas performing a 
bar insertion results in an element of $\OP_{n+1,k+1}$
(thus adding a bar to the bar model of $\sigma$).

For example, consider $\sigma = (5 \mid 1 4 6 \mid 2 3  \mid 7) \in \OP_{7,4}$.  The 
four possible star insertions
of $8$, together with their effect on $\inv$, are as follows.

\begin{center}
$\begin{array}{cccc}
(5 \mid 146 \mid 23 \mid 7 {\bf 8}) & 
(5 \mid 146 \mid 23 {\bf 8} \mid 7 ) &
(5 \mid 146 {\bf 8} \mid 23  \mid 7 ) &
(5 {\bf 8} \mid 146 \mid 23  \mid 7 )  \\
\inv + 0 & \inv + 1 & \inv + 2 & \inv + 3
\end{array}$
\end{center}
Rather than $\inv$ itself, we are  interested in the complementary statistic $\coinv$.  In our example, 
the effect of star insertion on $\coinv$ is as follows.  

\begin{center}
$\begin{array}{cccc}
(5 \mid 146 \mid 23 \mid 7 {\bf 8}) & 
(5 \mid 146 \mid 23 {\bf 8} \mid 7 ) &
(5 \mid 146 {\bf 8} \mid 23  \mid 7 ) &
(5 {\bf 8} \mid 146 \mid 23  \mid 7 )  \\
\coinv + 3 & \coinv + 2 & \coinv + 1 & \coinv + 0
\end{array}$
\end{center} 
In general,
if $\widehat{\sigma}$ is the result of star inserting 
$n$ into the block $B_i$ of $\sigma = (B_1 \mid \cdots \mid B_k) \in \OP_{n,k}$, we have
\begin{equation}
\label{star-insertion-equation-osp}
\coinv(\widehat{\sigma}) = \coinv(\sigma) + (i-1).
\end{equation}

On the other hand, consider using bar insertion to insert $8$ into
$\sigma = (5 \mid 1 4 6 \mid 2 3  \mid 7) \in \OP_{7,4}$.  The five possible ways to do this, together
with their effect on $\inv$, are as follows.

\begin{center}
$\begin{array}{ccccc}
(5 \mid 146 \mid 23 \mid 7 \mid {\bf 8}) & 
(5 \mid 146 \mid 23 \mid {\bf 8} \mid 7 ) &
(5 \mid 146 \mid {\bf 8} \mid 23  \mid 7 ) &
(5 \mid {\bf 8} \mid 146 \mid 23  \mid 7 ) &
({\bf 8} \mid 5  \mid 146 \mid 23  \mid 7 )  \\
\inv + 0 & \inv + 1 & \inv + 2 & \inv + 3 & \inv + 5
\end{array}$
\end{center}
The effect of these bar insertions on $\coinv$ is shown below.

\begin{center}
$\begin{array}{ccccc}
(5 \mid 146 \mid 23 \mid 7 \mid {\bf 8}) & 
(5 \mid 146 \mid 23 \mid {\bf 8} \mid 7 ) &
(5 \mid 146 \mid {\bf 8} \mid 23  \mid 7 ) &
(5 \mid {\bf 8} \mid 146 \mid 23  \mid 7 ) &
({\bf 8} \mid 5  \mid 146 \mid 23  \mid 7 )  \\
\coinv + 7 & \coinv + 6 & \coinv + 5 & \coinv + 4 & \coinv + 3
\end{array}$
\end{center}
In general, if $\widehat{\sigma}$ is the result of bar inserting the singleton block
$\{n\}$ in the space between
$B_{i-1}$ and $B_i$ of $\sigma = (B_1 \mid \cdots \mid B_k) \in \OP_{n,k}$, we have
\begin{equation}
\label{bar-insertion-equation-osp}
\coinv(\widehat{\sigma}) = \coinv(\sigma) + (n-k) + (i-1).
\end{equation}

Equations~\ref{star-insertion-equation-osp} and \ref{bar-insertion-equation-osp}
imply that the generating function $P_{n,k}(q)$ of $\coinv$ on $\OP_{n,k}$ satisfies the recursion
\begin{equation}
\label{p-recursion-equation}
P_{n,k}(q) = [k]_q \cdot (P_{n-1,k}(q) + q^{n-k} P_{n-1,k-1}(q)).
\end{equation}
Together with the initial conditions
\begin{equation}
\label{p-initial-condition-equation}
 \begin{cases}
P_{1,1}(q) = 1 \\
P_{n,k}(q) = 0 & n < k,
\end{cases}
\end{equation}
this determines $P_{n,k}(q)$ completely.
The  idea for constructing our bijection 
$\Psi: \OP_{n,k} \rightarrow \MMM_{n,k}$ is to
show combinatorially that the degree generating function $M_{n,k}(q)$ on $\MMM_{n,k}$
also satisfies the recursion of Equation~\ref{p-recursion-equation}.

The base case of our map $\Psi$ is given by the unique function
$\Psi: \OP_{1,1} \rightarrow \MMM_{1,1}$ mapping $(1) \mapsto 1$.

In general, suppose $\sigma = (B_1 \mid \cdots \mid B_k) \in \OP_{n,k}$ 
and let $\overline{\sigma}$ be the ordered set partition
of size $n-1$
obtained by erasing $n$ from $\sigma$, as well as the block containing $n$ if that block is a singleton.
Then $\overline{\sigma} \in \OP_{n-1,k}$ or  $\overline{\sigma} \in \OP_{n-1,k-1}$ 
according to whether $\{n\}$ is a singleton block
of $\sigma$.

We inductively assume that the portions of the function $\Psi$ given by
$\Psi: \OP_{n-1,k} \rightarrow \MMM_{n-1,k}$ and
$\Psi: \OP_{n-1,k-1} \rightarrow \MMM_{n-1,k-1}$ have already been defined
(so that in particular $\Psi(\overline{\sigma})$ is defined).  We define $\Psi(\sigma)$ as follows,
according to whether $\overline{\sigma} \in \OP_{n-1,k}$ or $\overline{\sigma} \in \OP_{n-1,k-1}$.

If $\overline{\sigma} \in \OP_{n-1,k}$, then $\{n\}$ is not a singleton block of 
$\sigma = (B_1 \mid \cdots \mid B_k)$ and $\sigma$ arises from $\overline{\sigma}$ from
star insertion. 
There exists $0 \leq i \leq k-1$ such that $n \in B_{i+1}$.
Define
\begin{equation}
\Psi(\sigma) := \Psi(\overline{\sigma}) \cdot x_n^i.
\end{equation}

If $\overline{\sigma} \in \OP_{n-1,k-1}$, then $\{n\}$ is a singleton block of $\sigma$, so that
$\sigma$ arises from $\overline{\sigma}$ by bar insertion.
We have $\Psi(\overline{\sigma}) \in \MMM_{n-1,k-1}$.
Invoking Lemma~\ref{canonical-skip}, let $S \subseteq [n-1]$ be the unique subset 
with $|S| = n-k$ such that $\xx(S) \mid (\Psi(\overline{\sigma})  \cdot \mm(S))$ but 
$\xx(U) \nmid (\Psi(\overline{\sigma})  \cdot \mm(S))$ for all $U \subseteq [n-1]$ with
 $|U| = n-k+1$.  Since $\{n\}$ is a singleton block of $\sigma$, there exists 
 $0 \leq i \leq k-1$ such that $\{n\} = B_{i+1}$.  Define 
 \begin{equation}
\Psi(\sigma) := \Psi(\overline{\sigma}) \cdot \mm(S) \cdot x_n^i.
\end{equation}

The map $\Psi$ is  best understood with an example.  Let 
$\sigma \in \OP_{8,5}$ be the ordered set partition
$\sigma = (5 \mid 146 \mid 8 \mid 23 \mid 7 )$.   We calculate $\Psi(\sigma)$ by starting 
with $\Psi: (1) \mapsto 1 \in \MMM_{1,1}$ and repeatedly inserting larger letters to build up 
to $\sigma$.

\begin{small}
\begin{center} 
$\begin{array}{c | c | c | c | c}
\sigma & \text{star/bar insertion?}  &S  &i  &\Psi(\sigma) \\ \hline
({\bf 1}) \in \OP_{1,1} & & & & 1 \in \MMM_{1,1}  \\
(1 \mid {\bf 2}) \in \OP_{2,2} & \text{bar} & \emptyset & 1 & 1 \cdot \mm(\emptyset) \cdot x_2^1 = x_2 \in \MMM_{2,2} \\
(1 \mid 2 {\bf 3}) \in \OP_{3,2} & \text{star} &  & 1  &   x_2 \cdot x_3^1 = x_2 x_3 \in \MMM_{3,2} \\
(1 {\bf 4} \mid 2 3) \in \OP_{4,2} & \text{star} &  & 0  &  x_2 x_3 \cdot x_4^0 = x_2 x_3 \in \MMM_{4,2} \\
({\bf 5} \mid 1  4 \mid 2 3) \in \OP_{5,3} & \text{bar} & 23 & 0  &  x_2 x_3 \cdot \mm(23) \cdot x_5^0 = 
x_2^2 x_3^2 \in \MMM_{5,3}   \\
( 5 \mid 14{\bf 6} \mid 2 3) \in \OP_{6,3} & \text{star} &  & 1  &   x_2^2 x_3^2 \cdot x_6^1 = x_2^2 x_3^2 x_6 \in \MMM_{6,3} \\
( 5 \mid 146 \mid 23 \mid {\bf 7}) \in \OP_{7,4} & \text{bar} & 123 & 3  &   x_2^2 x_3^2 x_6 \cdot \mm(123) \cdot x_7^3 =
x_1 x_2^3 x_3^3 x_6 x_7^3 \in \MMM_{7,4}  \\
( 5 \mid 146 \mid {\bf 8} \mid 23 \mid 7) \in \OP_{8,5} & \text{bar} & 123 & 2  &   
x_1 x_2^3 x_3^3 x_6 x_7^3 \cdot \mm(123) \cdot x_8^2 = x_1^2 x_2^4 x_3^4 x_6 x_7^3 x_8^2 \in \MMM_{8,5} 
\end{array}$
\end{center}
\end{small}
We conclude that $\Psi: (5 \mid 146 \mid 8 \mid 23 \mid 7 ) \mapsto x_1^2 x_2^4 x_3^4 x_6 x_7^3 x_8^2$.

\begin{lemma}
\label{psi-well-defined}
The above procedure gives a well-defined function $\Psi: \OP_{n,k} \rightarrow \MMM_{n,k}$.
\end{lemma}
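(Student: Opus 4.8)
The plan is to induct on $n$, the size of the ordered set partitions, proving simultaneously that the recursion is unambiguous and that its output lies in $\MMM_{n,k}$. Most of the ``well-defined'' assertion is already settled by the earlier lemmas: from $\sigma$ we recover $\overline{\sigma}$ uniquely; whether $\sigma$ arises by star or by bar insertion is dictated by whether $\{n\}$ is a singleton block of $\sigma$; the integer $i$ is pinned down by $n \in B_{i+1}$; and in the bar case the set $S$ is the unique subset furnished by Lemma~\ref{canonical-skip}, applied to $\Psi(\overline{\sigma}) \in \MMM_{n-1,k-1}$ --- a legitimate application precisely because $\Psi(\overline{\sigma})$ is nonskip by the inductive hypothesis. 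So the real content is to check that the monomial the recursion writes down is $(n,k)$-nonskip. The base case $n = k = 1$, $\Psi\colon (1) \mapsto 1$, is immediate.

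First I would isolate an elementary ``drop the maximum'' reduction: for any monomial $m$ and any $T \subseteq [n]$ with $\xx(T) \mid m$, one also has $\xx(T \setminus \{\max T\}) \mid m$, since deleting the largest element of $T$ does not change the positions --- hence the exponents --- of the remaining elements inside $\xx(T)$. Iterating this, if $m \in \MMM_{n-1,k}$ then \emph{no} skip monomial $\xx(T)$ with $|T| \geq n-k$ divides $m$, not merely those with $|T| = n-k$ named in Definition~\ref{nonskip-definition}. This is the one place where a short argument, rather than pure bookkeeping, is required, and it is exactly what bridges the gap between the size-$(n-k)$ condition defining $\MMM_{n-1,k}$ and the size-$(n-k+1)$ skip monomials that must be excluded for $\MMM_{n,k}$.

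The inductive step then splits into the two insertion cases. In the star case $\Psi(\sigma) = \Psi(\overline{\sigma}) \cdot x_n^i$ with $\Psi(\overline{\sigma}) \in \MMM_{n-1,k}$ and $0 \leq i \leq k-1$: the constraint $x_j^k \nmid \Psi(\sigma)$ holds for $j < n$ by the inductive hypothesis and for $j = n$ since $i < k$; and if $\xx(S') \mid \Psi(\sigma)$ for some $S' \subseteq [n]$ with $|S'| = n-k+1$, then either $n \in S'$, in which case $n = \max S'$ sits in position $n-k+1$ of $S'$ and contributes exponent $k$ to $\xx(S')$, contradicting $i < k$; or $n \notin S'$, so $\xx(S') \mid \Psi(\overline{\sigma})$ with $|S'| = n-k+1 > n-k$, contradicting the reduction of the previous paragraph. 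In the bar case write $m' := \Psi(\overline{\sigma}) \cdot \mm(S)$, so $\Psi(\sigma) = m' \cdot x_n^i$; since $\Psi(\overline{\sigma}) \in \MMM_{n-1,k-1}$ every variable exponent in $\Psi(\overline{\sigma})$ is at most $k-2$, hence every exponent in $m'$ is at most $k-1$, giving $x_j^k \nmid \Psi(\sigma)$ for $j < n$ while $j = n$ again follows from $i < k$; and Lemma~\ref{canonical-skip} guarantees $\xx(U) \nmid m'$ for all $U \subseteq [n-1]$ with $|U| = n-k+1$, so the same two subcases ($n \in S'$ versus $n \notin S'$) exclude $\xx(S') \mid \Psi(\sigma)$ exactly as before, this time using the property of $m'$ directly with no iteration needed.

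I expect the main obstacle to be the star case, and within it the ``drop the maximum'' reduction: it is essential to delete the largest element of the subset rather than an interior one (deleting an interior element raises the exponents of the later entries and destroys divisibility), and getting that comparison in the right direction is the only genuinely delicate point. Everything else --- the dichotomy of insertion types, the uniqueness of $i$ and of $S$, the $x_j^k$ bounds --- is routine exponent counting that the preceding lemmas have already prepared.
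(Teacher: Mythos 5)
Your proof is correct and follows essentially the same route as the paper: induction on $n$, a split into the star-insertion and bar-insertion cases, the exponent bound $x_j^k\nmid\Psi(\sigma)$ checked separately for $j<n$ and $j=n$, and the skip-monomial condition verified via the observation that removing $\max(T)$ from a skip set leaves a smaller skip set whose monomial still divides. The paper uses that ``drop the maximum'' reduction inline in Case~1 (letting $T':=T\setminus\{\max T\}$ handle both $n\in T$ and $n\notin T$ at once), whereas you isolate it as a preliminary observation and instead dispatch the $n\in T$ subcase with the $x_n^k$ bound; this is a cosmetic difference, and both treatments of Case~2 are identical.
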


\begin{proof}
This is certainly true for $(n,k) = (1,1)$.  Fix indices $k \leq n$ and assume that we have
well-defined functions $\Psi: \OP_{n-1,k} \rightarrow \MMM_{n-1,k}$
and $\Psi: \OP_{n-1,k-1} \rightarrow \MMM_{n-1,k-1}$.  Consider an ordered set partition 
$\sigma = (B_1 \mid \cdots \mid B_k) \in \OP_{n,k}$.
We need to check that the above procedure defining $\Psi(\sigma)$ actually yields an monomial
in $\MMM_{n,k}$.

{\bf Case 1:}  {\em $\{n\}$ is not a singleton block of $\sigma$.} 

In this case $\Psi(\sigma) = \Psi(\overline{\sigma}) \cdot x_n^i$,
where $n \in B_{i+1}$.  In particular, we have $x_n^k \nmid \Psi(\sigma)$.
Since $\Psi(\overline{\sigma}) \in \MMM_{n-1,k}$, we also know that 
$x_j^k \nmid \Psi(\sigma)$ for $1 \leq j \leq n-1$.  

Let $T \subseteq [n]$ satisfy $|T| = n-k+1$ and 
suppose $\xx(T) \mid \Psi(\sigma)$.  Let $T' := T  -  \{ \max(T) \}$, so that
$T' \subseteq [n-1]$ and $|T'| = n-k$.
Since $T' \subseteq [n-1]$ and 
$\xx(T) \mid \Psi(\sigma)$, we have
$\xx(T') \mid \Psi(\overline{\sigma})$, contradicting the assumption that
$\Psi(\overline{\sigma}) \in \MMM_{n-1,k}$.   We conclude that $\xx(T) \nmid \Psi(\sigma)$,
so that $\Psi(\sigma) \in \MMM_{n,k}$.

{\bf Case 2:}  {\em $\{n\}$ is a singleton block of $\sigma$.}

In this case $\Psi(\sigma) = \Psi(\overline{\sigma}) \cdot \mm(S) \cdot x_n^i$,
where $B_{i+ 1} = \{n\}$ and $S \subseteq [n-1]$ is as above.  By construction
$x_n^k \nmid \Psi(\sigma)$.  Since $\Psi(\overline{\sigma}) \in \MMM_{n-1,k-1}$, we know that
$x_j^{k-1} \nmid \Psi(\overline{\sigma})$ for all $1 \leq j \leq n-1$, so that 
$x_j^k \nmid \Psi(\sigma)$.

Let $T \subseteq [n]$ satisfy $|T| = n-k+1$ and 
suppose $\xx(T) \mid \Psi(\sigma)$.  
Our choice of $S$ (and Lemma~\ref{canonical-skip}) guarantee that
$T \not\subseteq [n-1]$.  
On the other hand, if $n \in T$, the variable power $x_n^k$ would appear in $\xx(T)$, implying the 
contradiction $x_n^k \mid \Psi(\sigma)$.
We conclude that
$\xx(T) \nmid \Psi(\sigma)$, so that
$\Psi(\sigma) \in \MMM_{n,k}$.
\end{proof}

The construction of $\Psi$ suggests the construction of its inverse.  Given an 
$(n,k)$-nonskip monomial $m = x_1^{i_1} \cdots x_{n-1}^{i_{n-1}} x_n^{i} \in \MMM_{n,k}$,
consider the monomial
$m' = x_1^{i_1} \cdots x_{n-1}^{i_{n-1}}$ obtained by erasing the last variable $x_n$.
Either $m'$ is divisible by some skip monomial $\xx(S)$ with $|S| = n-k$ or it is not.  If so,
then $S$ is unique and we have $\frac{m'}{\mm(S)} \in \MMM_{n-1,k-1}$.
If not, then we have $m' \in \MMM_{n-1,k}$.  Either way, we recursively have a size $n-1$ ordered 
set partition.  To get a size $n$ ordered set partition, use this branching structure to determine
whether to insert $n$ as a singleton block and use the exponent $i$  of $x_n$
to determine where to insert $n$ from left to right.

\begin{theorem}
\label{psi-is-bijection}
The function $\Psi: \OP_{n,k} \rightarrow \MMM_{n,k}$ is a bijection with the property that
$\coinv(\sigma) = \deg(\Psi(\sigma))$ for all $\sigma \in \OP_{n,k}$.

In particular, we have $P_{n,k}(q) = M_{n,k}(q)$ and $|\OP_{n,k}| = |\MMM_{n,k}|$.
\end{theorem}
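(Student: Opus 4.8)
The plan is to prove all three assertions simultaneously by induction on $n$, exploiting the recursive structure built into the definition of $\Psi$; the base cases (essentially $(n,k)=(1,1)$, together with the degenerate situations where one of the predecessor pieces is empty, such as $\OP_{n-1,k}$ when $k=n$ and correspondingly $\MMM_{n-1,n}=\emptyset$ because $\xx(\emptyset)=1$) are trivial, and match the initial conditions \ref{p-initial-condition-equation}.

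First I would establish the degree identity $\deg(\Psi(\sigma))=\coinv(\sigma)$. Given $\sigma\in\OP_{n,k}$, let $\overline{\sigma}$ be its restriction (delete $n$, and its block if that block is the singleton $\{n\}$). If $\{n\}$ is not a singleton, then $n$ lies in block $B_{i+1}$ for a unique $0\le i\le k-1$ and $\Psi(\sigma)=\Psi(\overline{\sigma})\cdot x_n^i$, so $\deg\Psi(\sigma)=\deg\Psi(\overline{\sigma})+i=\coinv(\overline{\sigma})+i=\coinv(\sigma)$ by the inductive hypothesis and Equation~\ref{star-insertion-equation-osp}. If $\{n\}=B_{i+1}$ is a singleton, then $\Psi(\sigma)=\Psi(\overline{\sigma})\cdot\mm(S)\cdot x_n^i$ with $|S|=n-k$, so $\deg\Psi(\sigma)=\deg\Psi(\overline{\sigma})+(n-k)+i=\coinv(\overline{\sigma})+(n-k)+i=\coinv(\sigma)$ by Equation~\ref{bar-insertion-equation-osp}.

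Next I would prove $\Psi$ is a bijection by producing an explicit two-sided inverse $\Phi\colon\MMM_{n,k}\to\OP_{n,k}$ and verifying $\Phi\circ\Psi=\mathrm{id}$ and $\Psi\circ\Phi=\mathrm{id}$ by induction (equivalently: establish degree-compatible bijections $\MMM_{n,k}\cong\MMM_{n-1,k}\times\{0,\dots,k-1\}\sqcup\MMM_{n-1,k-1}\times\{0,\dots,k-1\}$ matching the analogous star/bar decomposition of $\OP_{n,k}$). Given $m\in\MMM_{n,k}$, write $m=m'\cdot x_n^i$ with $m'$ involving only $x_1,\dots,x_{n-1}$. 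The governing dichotomy is whether $m'$ is divisible by some skip monomial $\xx(S)$ with $|S|=n-k$. If it is not, then $m'\in\MMM_{n-1,k}$ (its two defining conditions are inherited from $m$, the first being exactly this non-divisibility), and one sets $\overline{\sigma}:=\Phi(m')$ and star-inserts $n$ into block $i+1$, which is legal since $x_n^k\nmid m$ forces $i\le k-1$. If it is, then since $m\in\MMM_{n,k}$ forbids skip monomials of size $n-k+1$, Lemma~\ref{canonical-skip-union} provides a unique such set $S=S_m$; one checks that $m'/\mm(S_m)\in\MMM_{n-1,k-1}$, sets $\overline{\sigma}:=\Phi(m'/\mm(S_m))$, and bar-inserts $\{n\}$ into position $i+1$. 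That these two cases of $\Phi$ invert the two cases of $\Psi$ follows from the uniqueness clause of Lemma~\ref{canonical-skip} (used to match the set $S$ produced in the definition of $\Psi$ with the set $S_m$ recovered by $\Phi$); hence $|\OP_{n,k}|=|\MMM_{n,k}|$, and together with the degree identity, $P_{n,k}(q)=\sum_{\sigma}q^{\coinv(\sigma)}=\sum_{\sigma}q^{\deg\Psi(\sigma)}=\sum_{m}q^{\deg m}=M_{n,k}(q)$. (If one prefers, Step 2 can instead be split as: $\Psi$ is injective by the same case analysis, and $M_{n,k}(q)$ satisfies the recursion \ref{p-recursion-equation} by the decomposition above, so $|\MMM_{n,k}|=|\OP_{n,k}|$ and injectivity upgrades to bijectivity.)

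The main obstacle is the claim $m'/\mm(S_m)\in\MMM_{n-1,k-1}$ in the bar case, and the bookkeeping that makes the two recursive decompositions genuinely agree. Non-divisibility of $m'/\mm(S_m)$ by any $\xx(T)$ with $|T|=n-k+1$ is immediate because $m'/\mm(S_m)$ divides $m$; the delicate point is the variable-power condition, and in particular the case $j\notin S_m$, where one must show $x_j^{k-1}\nmid m$ already holds. This is where Lemma~\ref{skip-monomial-union} and the structure of skip monomials enter: if $x_j^{k-1}\mid m$ then, using $\xx(S)=\prod_{\ell}x_{s_\ell}^{s_\ell-\ell+1}$ and the bound $j-|\{s\in S_m:s<j\}|\le k-1$, one verifies directly that $\xx(S_m\cup\{j\})$ divides $m$, with $|S_m\cup\{j\}|=n-k+1$, contradicting $m\in\MMM_{n,k}$. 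Once this verification and the symmetric check in the forward direction (that $\Psi(\sigma)\in\MMM_{n,k}$ lands in the correct half of the decomposition, which is essentially Lemma~\ref{psi-well-defined}) are in place, the remaining steps are routine inductions.
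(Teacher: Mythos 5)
Your proposal is correct and follows essentially the same route as the paper: a recursive construction of the inverse map $\Phi$ (driven by whether $m'$ is divisible by some $\xx(S)$ with $|S|=n-k$), the same appeal to Lemmas~\ref{skip-monomial-union}--\ref{canonical-skip} for uniqueness of $S$, and the same key claim that $m'/\mm(S)\in\MMM_{n-1,k-1}$. You actually fill in a detail the paper glosses over — the explicit bound $j - |\{s\in S : s<j\}| \le k-1$ needed to conclude $\xx(S\cup\{j\})\mid m'$ — and make the degree identity explicit via Equations~\ref{star-insertion-equation-osp} and~\ref{bar-insertion-equation-osp}, but the underlying argument is the paper's.
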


\begin{proof}
We recursively define the inverse $\Phi$ to the function $\Psi$.  When $(n,k) = (1,1)$, there is 
only one choice: let $\Phi: \MMM_{n,k} \rightarrow \OP_{n,k}$ be the unique assignment
$\Phi: 1 \mapsto (1)$.

In general, fix positive $k \leq n$ and assume inductively that 
$\Phi: \MMM_{n-1,k} \rightarrow \OP_{n-1,k}$ and 
$\Phi: \MMM_{n-1,k-1} \rightarrow \OP_{n-1,k-1}$ have already been defined.
Let $m = x_1^{i_1} \cdots x_{n-1}^{i_{n-1}} x_n^i \in \MMM_{n,k}$ and define
$m' := x_1^{i_1} \cdots x_{n-1}^{i_{n-1}}$.  Either $m' \in \MMM_{n-1,k}$
or $m' \notin \MMM_{n-1,k}$.

If $m' \in \MMM_{n-1,k}$, then $\Phi(m') = (B_1 \mid \cdots \mid B_k)$ is an
ordered set partition of size $n-1$.  To define $\Phi(m) = \Phi(m' \cdot x_n^i)$, we 
simply star insert $n$ to the block $B_{i+1}$:
\begin{equation}
\Phi(m) = \Phi(m' \cdot x_n^i) := (B_1 \mid \cdots \mid B_{i+1} \cup \{n\} \mid \cdots \mid B_k) \in \OP_{n,k}.
\end{equation}

If $m' \notin \MMM_{n-1,k}$, then there exists a subset $S \subseteq [n-1]$ such that 
$|S| = n-k$ and $\xx(S) \mid m'$.  Since $m = m' \cdot x_n^i \in \MMM_{n,k}$,
Lemma~\ref{canonical-skip-union} guarantees that the set $S$ is unique.  Since 
$\xx(S) \mid m'$, we have $\mm(S) \mid m'$ and the quotient $\frac{m'}{\mm(S)}$ is a monomial.

{\bf Claim:}  $\frac{m'}{\mm(S)} \in \MMM_{n-1,k-1}$.  

Since $m \in \MMM_{n,k}$, we know that $\xx(T) \nmid \frac{m'}{\mm(S)}$ for all $T \subseteq [n-1]$
with $|T| = n-k+1$.  Let $1 \leq j \leq n-1$.  We need to show $x_j^{k-1} \nmid \frac{m'}{\mm(S)}$.
If $j \in S$ this is immediate from the fact that $x_j^k \nmid m'$.  If $j \notin S$ and 
$x_j^{k-1} \mid \frac{m'}{\mm(S)}$, then $x_j^{k-1} \mid m'$ and
$\xx(S \cup \{j\}) \mid m'$, a contradiction to the assumption
that $m = m' \cdot x_n^i \in \MMM_{n,k}$.   This finishes the proof of the Claim.

By the Claim, we recursively have an ordered set partition
$\Phi \left( \frac{m'}{\mm(S)} \right) = (B_1 \mid \cdots \mid B_{k-1})$ of size $n-1$.
To define $\Phi(m) = \Phi(m' x_n^i)$, we bar insert a singleton block $\{n\}$ 
to the left of
$B_{i+1}$
\begin{equation}
\Phi(m) := (B_1 \mid \cdots \mid B_{i} \mid n \mid B_{i+1} \mid \cdots \mid B_{k-1}) \in \OP_{n,k}.
\end{equation}

As with $\Psi$, the map $\Phi$ is  best understood with an example.  Consider the monomial
$m = x_1^2 x_2^4 x_3^4 x_6 x_7^3 x_8^2 \in \MMM_{8,5}$.  To calculate 
$\Phi(m) \in \OP_{8,5}$, we write down  the following table.  We show entries in the right hand column in
bold to indicate that no more letters can be added to their block.

\begin{small}
\begin{center}
$\begin{array}{ c | c | c | c | c | c | c | c}
m & m' & (n,k) & m' \in \MMM_{n-1,k}?  & S & \frac{m'}{\mm(S)}  & i & \Phi(m) \\ \hline
x_1^2 x_2^4 x_3^4 x_6 x_7^3 x_8^2 & 
x_1^2 x_2^4 x_3^4 x_6 x_7^3 & (8,5) &
\text{no} & 123 & x_1 x_2^3 x_3^3 x_6 x_7^3 & 2 & 
( \cdot \mid \cdot  \mid {\bf 8} \mid \cdot   \mid \cdot)  \\
x_1 x_2^3 x_3^3 x_6 x_7^3 & 
x_1 x_2^3 x_3^3 x_6 & (7,4) &
\text{no} & 123 &  x_2^2 x_3^2 x_6  & 3 & 
( \cdot \mid \cdot  \mid {\bf 8} \mid \cdot   \mid {\bf 7})  \\
x_2^2 x_3^2 x_6 & 
x_2^2 x_3^2  & (6,3) &
\text{yes} &  &    & 1 & 
( \cdot \mid 6  \mid {\bf 8} \mid \cdot   \mid {\bf 7})  \\
x_2^2 x_3^2 & x_2^2 x_3^2 & (5,3) & \text{no} & 23 & x_2 x_3 & 0 & 
({\bf 5}  \mid 6  \mid {\bf 8} \mid \cdot   \mid {\bf 7}) \\
x_2 x_3 & x_2 x_3 & (4,2) & \text{yes} & & & 0 &
({\bf 5} \mid 46 \mid {\bf 8} \mid \cdot \mid {\bf 7}) \\
x_2 x_3 & x_2 & (3,2) & \text{yes} &  & &1 &
({\bf 5} \mid 46 \mid {\bf 8} \mid 3 \mid {\bf 7}) \\
x_2 & 1 & (2,2) & \text{no} & \emptyset & 1 & 1 &
({\bf 5} \mid 46 \mid {\bf 8} \mid {\bf 23} \mid {\bf 7}) \\
1 & 1 & (1,1) & \text{no} & \emptyset & 1 & 0 &
({\bf 5} \mid {\bf 146} \mid {\bf 8} \mid {\bf 23} \mid {\bf 7}) 
\end{array}$
\end{center}
\end{small}

The rules for proceeding from one row of this table to the next are as follows.  
\begin{itemize}
\item
Define $m$
to be the monomial $m'$ from the above row
(if the answer to the query in the above row was `yes') or 
$\frac{m'}{\mm(S)}$ from the above row
(if the answer to the query in the above row was `no').
\item 
Define the $(n,k)$-column entry to be $(n-1,k)$ from the above row
(if the answer to the query in the above row was `yes') or 
$(n-1,k-1)$ from the above row 
(if the answer to the query in the above row was `no').
\item  Using $(n,k)$ in the current row, define the monomial $m'$ by the 
relation $m = m' \cdot x_n^i$.
\item  Record the value of $i$.
\item  Check whether $m' \in \MMM_{n-1,k}$.
\item  If $m' \notin \MMM_{n-1,k}$, let $S \subseteq [n-1]$ be the unique set of size $|S| = n-k$ such that
$\xx(S) \mid m'$.
\item  If $m' \notin \MMM_{n-1,k}$, calculate $\frac{m'}{\mm(S)}$.
\item  Finally, insert $n$ into the partial ordered set partition $\Phi(m)$ from the above row.
If the answer to the query in the current row was `yes', insert $n$ into the $i^{th}$ unfrozen block
from the left.  If the answer to the query in the current row was `no', insert $n$ into the $i^{th}$ unfrozen 
block from the left and freeze that block.
\end{itemize}
The above table shows that 
$\Phi: x_1^2 x_2^4 x_3^4 x_6 x_7^3 x_8^2 \mapsto (5 \mid 146 \mid 8 \mid 23 \mid 7)$.

We leave it to the reader to check that the recursions $\Phi$ and $\Psi$ undo each other, so that
we have the compositions
$\Phi \circ \Psi = \mathrm{id}_{\OP_{n,k}}$ and
$\Psi \circ \Phi = \mathrm{id}_{\MMM_{n,k}}$.  The definition of $\coinv$ makes it clear that
$\coinv(\sigma) = \deg(\Phi(\sigma))$ for all $\sigma \in \OP_{n,k}$.
\end{proof}

\subsection{Gr\"obner basis and Hilbert series}  We exploit the bijection $\Psi$ of 
Theorem~\ref{psi-is-bijection}.  First, we determine $\Hilb(R_{n,k}; q)$.

\begin{theorem}
\label{hilbert-series-theorem}
Let $k \leq n$ be positive integers.  The Hilbert series $\Hilb(R_{n,k}; q)$ of the graded vector space 
$R_{n,k}$ is the generating function for $\coinv$ on $\OP_{n,k}$:
\begin{equation}
\Hilb(R_{n,k}; q) = P_{n,k}(q) = \rev_q( [k]!_q \cdot \Stir_q(n,k).
\end{equation}
In particular, we have
\begin{equation}
\dim(R_{n,k}) = |\OP_{n,k}| = k! \cdot \Stir(n,k).
\end{equation}
\end{theorem}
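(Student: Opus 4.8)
The plan is to sandwich $\dim(R_{n,k})$ between $|\OP_{n,k}|$ from below and $|\MMM_{n,k}|$ from above, obtain equality, and then upgrade to the graded statement by reading off the Hilbert series from the monomial basis $\MMM_{n,k}$ and applying Theorem~\ref{psi-is-bijection}.

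First I would establish the upper bound using Gr\"obner theory. Take $<$ to be the lexicographic order. By Lemma~\ref{skip-monomials-in-initial}, every skip monomial $\xx(S)$ with $|S| = n-k+1$ and every variable power $x_i^k$ belongs to $\initial_<(I_{n,k})$. Hence any monomial divisible by one of these lies in $\initial_<(I_{n,k})$, so it is not a standard monomial; equivalently, the set of standard monomials of $R_{n,k}$ is contained in $\MMM_{n,k}$. Since the standard monomials descend to a basis of $R_{n,k}$,
\[
\dim(R_{n,k}) = \dim\left(\frac{\QQ[\xx_n]}{\initial_<(I_{n,k})}\right) \leq |\MMM_{n,k}| = |\OP_{n,k}|,
\]
the last equality being Theorem~\ref{psi-is-bijection}.

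Next I would establish the matching lower bound using the point-set machinery of Garsia--Procesi. By Lemma~\ref{I-contained-in-T} we have $I_{n,k} \subseteq \TT(Y_{n,k})$, so there is a surjection $R_{n,k} = \QQ[\xx_n]/I_{n,k} \twoheadrightarrow \QQ[\xx_n]/\TT(Y_{n,k})$, whence
\[
\dim(R_{n,k}) \geq \dim\left(\frac{\QQ[\xx_n]}{\TT(Y_{n,k})}\right) = |Y_{n,k}| = |\OP_{n,k}|.
\]
Combining the two inequalities gives $\dim(R_{n,k}) = |\OP_{n,k}| = k! \cdot \Stir(n,k)$. As a byproduct, the surjection above is an isomorphism (so $I_{n,k} = \TT(Y_{n,k})$), and, forcing the containment of the previous paragraph to be an equality, the standard monomial basis of $R_{n,k}$ is exactly $\MMM_{n,k}$. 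Since $\MMM_{n,k}$ is a homogeneous basis, $\Hilb(R_{n,k};q) = \sum_{m \in \MMM_{n,k}} q^{\deg(m)} = M_{n,k}(q)$, which equals $P_{n,k}(q) = \sum_{\sigma \in \OP_{n,k}} q^{\coinv(\sigma)}$ by Theorem~\ref{psi-is-bijection}, and this in turn equals $\rev_q([k]!_q \cdot \Stir_q(n,k))$ by the $\coinv/\comaj$ identity recalled in Section~\ref{Background}.

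There is no genuinely hard step here: the substance was front-loaded into Lemmas~\ref{I-contained-in-T} and \ref{skip-monomials-in-initial} and into Theorem~\ref{psi-is-bijection}. The only point requiring care is the bookkeeping in the squeeze. A priori the Gr\"obner argument only shows the standard monomials form a \emph{subset} of $\MMM_{n,k}$, so by itself it yields an inequality of coefficientwise-nonnegative power series $\Hilb(R_{n,k};q) \preceq M_{n,k}(q)$; it is precisely the independently derived dimension bound $\dim(R_{n,k}) \geq |\OP_{n,k}| = |\MMM_{n,k}|$ that pins the subset down to all of $\MMM_{n,k}$ and turns that inequality into the asserted identity.
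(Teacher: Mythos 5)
Your proposal is correct and follows essentially the same route as the paper: you sandwich $\dim(R_{n,k})$ using Lemma~\ref{skip-monomials-in-initial} for the upper bound and Lemma~\ref{I-contained-in-T} for the lower bound, then invoke Theorem~\ref{psi-is-bijection} to identify both sides; the paper phrases the lower bound as the inclusion of standard monomial bases $\BBB^{\bf T}_{n,k} \subseteq \BBB^{I}_{n,k}$ rather than as a surjection of quotients, but these are the same fact.
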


\begin{proof}
Let $<$ be the lexicographic  order on $\QQ[\xx_n]$ and consider the ideals
$I_{n,k}$ and $\TT(Y_{n,k})$.  By Lemma~\ref{I-contained-in-T}, we know 
$I_{n,k} \subseteq \TT(Y_{n,k})$.  From this we get the containment of monomial ideals
\begin{equation}
\initial_<(I_{n,k}) \subseteq \initial_<(\TT(Y_{n,k})).
\end{equation}
Let $\BBB^I_{n,k}$ be the standard monomial basis of $\frac{\QQ[\xx_n]}{I_{n,k}}$ and let 
$\BBB^{\bf T}_{n,k}$ be the standard monomial basis of $\frac{\QQ[\xx_n]}{\TT(Y_{n,k})}$.
The containment above immediately gives 
\begin{equation}
\BBB^{\bf T}_{n,k} \subseteq \BBB^{I}_{n,k}.
\end{equation}
Lemma~\ref{skip-monomials-in-initial} extends this inclusion to the triple
\begin{equation}
\BBB^{\bf T}_{n,k} \subseteq \BBB^{I}_{n,k} \subseteq \MMM_{n,k}.
\end{equation}

Since $\frac{\QQ[\xx_n]}{{\bf T}(Y_{n,k})} \cong \frac{\QQ[\xx_n]}{{\bf I}(Y_{n,k})} \cong \QQ[\OP_{n,k}]$, we know
$|\BBB^{\bf T}_{n,k}| = |\OP_{n,k}|$.  On the other hand, Theorem~\ref{psi-is-bijection} says 
$|\MMM_{n,k}| = |\OP_{n,k}|$.  Therefore, the above inclusions are actually equalities and we have
\begin{equation}
\label{normal-basis-equality}
\BBB^{\bf T}_{n,k} = \MMM_{n,k}.
\end{equation}
This equality and the containment $I_{n,k} \subseteq \TT(Y_{n,k})$ imply that we have the equality of ideals
\begin{equation}
\label{ideal-equality}
I_{n,k} = \TT(Y_{n,k}).
\end{equation}
Consequently, we have 
\begin{equation}
\dim(R_{n,k}) = |\BBB^{\bf T}_{n,k}| = |\MMM_{n,k}| = |\OP_{n,k}|.
\end{equation}
Moreover, we have 

\begin{align}
\Hilb(R_{n,k}; q) &= \Hilb \left( \frac{\QQ[\xx_n]}{I_{n,k}}; q \right) & & \text{(definition of $R_{n,k}$)}\\
&=  \Hilb \left( \frac{\QQ[\xx_n]}{\TT(Y_{n,k})}; q \right) & & \text{(Equation~\ref{ideal-equality})}  \\
&= \sum_{m \in \BBB^{\bf T}_{n,k}} q^{\deg(m)} & & \text{(definition of $\BBB^{\TT}_{n,k}$)} \\
&= \sum_{m \in \MMM_{n,k}} q^{\deg(m)} & & \text{(Equation~\ref{normal-basis-equality})} \\
&= \sum_{\sigma \in \OP_{n,k}} q^{\coinv(\sigma)} & & \text{(Theorem~\ref{psi-is-bijection})}.
\end{align}

The proof is complete.
\end{proof}

We are also in a position to identify the {\em ungraded} Frobenius character of $R_{n,k}$.

\begin{theorem}
\label{ungraded-frobenius-theorem}
Let $k \leq n$ be positive integers.  As {\em ungraded} $\symm_n$-modules, we have
\begin{equation}
R_{n,k} \cong_{\symm_n} \QQ[\OP_{n,k}].
\end{equation}
Equivalently, we have the {\em ungraded} Frobenius character
\begin{equation}
\Frob(R_{n,k}) = \sum_{\substack{\lambda \vdash n \\ \ell(\lambda) = k}} 
{k \choose m_1(\lambda), \dots, m_k(\lambda)} h_{\lambda}(\xx),
\end{equation}
where ${k \choose m_1(\lambda), \dots, m_k(\lambda)} = \frac{k!}{m_1(\lambda)! \cdots m_k(\lambda)!}$ 
is the multinomial coefficient.
\end{theorem}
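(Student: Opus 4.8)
The plan is to leverage the identity $I_{n,k} = \TT(Y_{n,k})$ that was already established in the course of proving Theorem~\ref{hilbert-series-theorem}, combine it with the general Garsia--Procesi machinery from the beginning of Section~\ref{Hilbert series}, and then finish with an elementary orbit--stabilizer computation for the permutation module $\QQ[\OP_{n,k}]$.

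First I would record the chain of $\symm_n$-equivariant isomorphisms
\[
R_{n,k} = \frac{\QQ[\xx_n]}{I_{n,k}} = \frac{\QQ[\xx_n]}{\TT(Y_{n,k})} \cong_{\symm_n} \frac{\QQ[\xx_n]}{\II(Y_{n,k})} \cong_{\symm_n} \QQ[Y_{n,k}] \cong_{\symm_n} \QQ[\OP_{n,k}],
\]
where the first equality is Equation~\ref{ideal-equality}, the middle isomorphism is the general fact (valid since $Y_{n,k}$ is $\symm_n$-stable) that $\QQ[\xx_n]/\TT(Y) \cong_G \QQ[\xx_n]/\II(Y) \cong_G \QQ[Y]$, and the last isomorphism comes from the $\symm_n$-equivariant bijection $Y_{n,k} \leftrightarrow \OP_{n,k}$. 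This proves the module statement; since the Frobenius character determines a finite-dimensional module up to isomorphism, it remains only to compute $\Frob(\QQ[\OP_{n,k}])$ and match it with the stated formula.

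For that, I would decompose $\OP_{n,k}$ into $\symm_n$-orbits. The orbit of $\sigma = (B_1 \mid \cdots \mid B_k)$ depends only on the composition $\alpha = (|B_1|, \dots, |B_k|) \models n$ into $k$ positive parts, and the stabilizer of $\sigma$ is the Young subgroup $\symm_\alpha = \symm_{|B_1|} \times \cdots \times \symm_{|B_k|}$; hence
\[
\QQ[\OP_{n,k}] \cong_{\symm_n} \bigoplus_{\substack{\alpha \models n \\ \ell(\alpha) = k}} \QQ[\symm_n/\symm_\alpha] = \bigoplus_{\substack{\alpha \models n \\ \ell(\alpha) = k}} \Ind_{\symm_\alpha}^{\symm_n}(\triv),
\]
the sum over (strong) compositions of $n$ with exactly $k$ parts. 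Applying the identity $\Frob(\QQ[\symm_n/\symm_\alpha]) = h_\alpha(\xx)$ from Section~\ref{Background} and using $h_\alpha(\xx) = h_\lambda(\xx)$ whenever $\lambda$ is the partition obtained by sorting $\alpha$, I would group the compositions by their underlying partition: each $\lambda \vdash n$ with $\ell(\lambda) = k$ arises from exactly $\frac{k!}{m_1(\lambda)! \cdots m_k(\lambda)!}$ compositions, which yields the claimed formula.

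I do not expect a genuine obstacle here: the substantive input, namely the equality $I_{n,k} = \TT(Y_{n,k})$, which rests on Lemma~\ref{I-contained-in-T}, Lemma~\ref{skip-monomials-in-initial}, and the counting bijection of Theorem~\ref{psi-is-bijection}, is already in hand. The only point requiring a moment's care is the passage from compositions to partitions in the Frobenius sum, i.e.\ verifying that the multiplicity of $h_\lambda(\xx)$ equals the number of distinct orderings of the parts of $\lambda$.
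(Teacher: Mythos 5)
Your proof is correct and follows essentially the same route as the paper: both hinge on the equality $I_{n,k} = \TT(Y_{n,k})$ established in the proof of Theorem~\ref{hilbert-series-theorem}, combined with the general Garsia--Procesi isomorphisms $\QQ[\xx_n]/\TT(Y) \cong \QQ[\xx_n]/\II(Y) \cong \QQ[Y]$. You additionally spell out the orbit decomposition of $\QQ[\OP_{n,k}]$ into coset modules $\Ind_{\symm_\alpha}^{\symm_n}(\triv)$ to derive the explicit $h_\lambda$-expansion, a standard computation the paper simply asserts as ``equivalent'' without writing it out.
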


\begin{proof}
We know that $\frac{\QQ[\xx_n]}{\TT(Y_{n,k})} \cong \frac{\QQ[\xx_n]}{\II(Y_{n,k})} \cong \QQ[\OP_{n,k}]$
as ungraded $\symm_n$-representations.  The proof of Theorem~\ref{hilbert-series-theorem} shows
$\TT(Y_{n,k}) = I_{n,k}$, so that $R_{n,k} = \frac{\QQ[\xx_n]}{I_{n,k}} = \frac{\QQ[\xx_n]}{\TT(Y_{n,k})}$.
\end{proof}

Recall that an $(n,k)$-staircase is a shuffle of the sequences
$(0, 1,  \dots, k-1)$ and $(k-1,  \dots, k-1)$, where there are $n-k$ copies of $k-1$ in the second sequence.

\begin{defn}
The  {\em $(n,k)$-Artin monomials $\AAA_{n,k}$} are those monomials in
$\QQ[\xx_n]$ whose exponent vectors fit under at least one $(n,k)$-staircase.
\end{defn}

In particular, the set $\AAA_{n,n}$ consists of the usual Artin monomials.

\begin{theorem}
\label{artin-basis-theorem}
Let $k \leq n$ be positive integers.  We have $\AAA_{n,k} = \MMM_{n,k}$.  Moreover, the set $\AAA_{n,k}$ descends
to a monomial basis of $R_{n,k}$.
\end{theorem}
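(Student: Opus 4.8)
The plan is to reduce the equality $\AAA_{n,k}=\MMM_{n,k}$ to an elementary statement about exponent vectors, and then to read off the basis claim from the proof of Theorem~\ref{hilbert-series-theorem}. Fix a monomial $m=x_1^{a_1}\cdots x_n^{a_n}$ and write $a=(a_1,\dots,a_n)$ for its exponent vector. First I would dispose of monomials with some $a_i\ge k$: such an $m$ is divisible by $x_i^k$, so $m\notin\MMM_{n,k}$ by Definition~\ref{nonskip-definition}, and since every entry of every $(n,k)$-staircase is at most $k-1$ we also have $m\notin\AAA_{n,k}$; so both sets agree (vacuously) on these. From now on assume $a_i\le k-1$ for all $i$. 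Under this hypothesis, unwinding the two definitions gives clean reformulations: $m\in\AAA_{n,k}$ iff there exist indices $p_1<\cdots<p_k$ in $[n]$ with $a_{p_j}\le j-1$ for all $j$ (the remaining $n-k$ positions of the staircase may be set to $k-1$, which is harmless since every $a_i\le k-1$); and $m\in\MMM_{n,k}$ iff there do \emph{not} exist indices $s_1<\cdots<s_{n-k+1}$ in $[n]$ with $a_{s_j}\ge s_j-j+1$ for all $j$, where one uses that $\xx(S)$ divides $m$ exactly when the $j$-th element $s_j$ of $S$ satisfies $a_{s_j}\ge s_j-j+1$, together with the observation that if $\xx(S)$ with $|S|>n-k+1$ divides $m$ then so does $\xx(S')$ for the length-$(n-k+1)$ prefix $S'$ of $S$.

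The heart of the argument is then the combinatorial equivalence: for $a\in\{0,1,\dots,k-1\}^n$, there is an increasing length-$k$ ``smallness'' sequence ($a_{p_j}\le j-1$) if and only if there is \emph{no} increasing length-$(n-k+1)$ ``largeness'' sequence ($a_{s_j}\ge s_j-j+1$). I would prove this via two greedy left-to-right sweeps over the positions $1,\dots,n$. The Artin sweep carries a counter $j_A$ starting at $1$ and, at position $i$, selects $i$ and increments $j_A$ whenever $a_i\le j_A-1$; a routine exchange argument shows this produces a longest smallness sequence, of length $\ell_A$. The skip sweep carries a counter $j_S$ starting at $1$ and selects $i$, incrementing $j_S$, whenever $a_i\ge i-j_S+1$; another exchange argument shows this produces a longest largeness sequence, of length $\ell_S$. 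The key point---which I expect to be the main (if brief) obstacle---is that these two sweeps select \emph{complementary} subsets of $[n]$. To see this one maintains the invariant $j_A(i)+j_S(i)=i+1$, where $j_A(i),j_S(i)$ denote the counter values just before position $i$ is processed: granting the invariant, the Artin sweep takes $i$ precisely when $a_i<j_A(i)$, while the skip sweep takes $i$ precisely when $a_i\ge i-j_S(i)+1=j_A(i)$, so exactly one of them takes $i$, and the invariant propagates. Hence $\ell_A+\ell_S=n$, so $\ell_A\ge k$ if and only if $\ell_S\le n-k$, which is exactly the desired equivalence. This yields $\AAA_{n,k}=\MMM_{n,k}$ (and, combined with Theorem~\ref{psi-is-bijection}, the byproduct $|\AAA_{n,k}|=|\OP_{n,k}|$).

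For the final assertion, the proof of Theorem~\ref{hilbert-series-theorem} already shows that $\MMM_{n,k}$ \emph{is} the standard monomial basis $\BBB^I_{n,k}$ of $R_{n,k}=\QQ[\xx_n]/I_{n,k}$ with respect to lexicographic order---this is Equation~\ref{normal-basis-equality} together with the equality of ideals in Equation~\ref{ideal-equality}. Therefore $\{\,m+I_{n,k}\,:\,m\in\AAA_{n,k}\,\}=\{\,m+I_{n,k}\,:\,m\in\MMM_{n,k}\,\}$ is a $\QQ$-vector space basis of $R_{n,k}$, which finishes the proof. The only genuinely new input is the exponent-vector equivalence above; the rest is bookkeeping or is already in hand.
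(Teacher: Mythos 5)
Your proof is correct, and for the central equality $\AAA_{n,k}=\MMM_{n,k}$ it takes a genuinely different route from the paper. The paper's proof establishes the easy inclusion $\AAA_{n,k}\subseteq\MMM_{n,k}$ by a direct check that staircase monomials are nonskip, and then obtains the reverse inclusion $\MMM_{n,k}\subseteq\AAA_{n,k}$ by returning to the recursive bijection $\Psi:\OP_{n,k}\to\MMM_{n,k}$ of Theorem~\ref{psi-is-bijection} and showing by induction on $n$ (splitting on whether $\{n\}$ is a singleton block) that $\Psi(\OP_{n,k})\subseteq\AAA_{n,k}$. Your argument bypasses $\Psi$ entirely for this step: after reducing both membership conditions (for exponent vectors in $\{0,\dots,k-1\}^n$) to the existence of a length-$k$ ``smallness'' chain versus the nonexistence of a length-$(n-k+1)$ ``largeness'' chain, you run two greedy left-to-right sweeps and observe, via the invariant $j_A(i)+j_S(i)=i+1$, that the two sweeps select complementary subsets of $[n]$, whence $\ell_A+\ell_S=n$ and the two conditions are equivalent. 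This is a clean, self-contained duality argument; what it buys is that the equality $\AAA_{n,k}=\MMM_{n,k}$ becomes a purely combinatorial fact about integer sequences, independent of the insertion recursion, and it simultaneously gives both inclusions at once. The paper's route buys less here but costs nothing extra, since $\Psi$ is already on hand and its recursive structure lines up with the staircase shape. One small remark: the parenthetical observation you invoke about $\xx(S)$ for $|S|>n-k+1$ is unnecessary, since Definition~\ref{nonskip-definition} of $\MMM_{n,k}$ is already stated only in terms of sets of size exactly $n-k+1$ (together with the pure variable powers $x_i^k$); your reformulation follows directly without it. Your treatment of the basis claim is identical to the paper's, correctly reading $\MMM_{n,k}=\BBB^{\mathbf{T}}_{n,k}=\BBB^{I}_{n,k}$ off Equations~\ref{normal-basis-equality} and~\ref{ideal-equality} from the proof of Theorem~\ref{hilbert-series-theorem}.
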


The images in $R_{n,k}$ of the monomials in $\AAA_{n,k}$ will be called the {\em generalized Artin basis}
of $R_{n,k}$.

\begin{proof}
The proof of Theorem~\ref{hilbert-series-theorem} implies that $\MMM_{n,k}$ is the standard monomial basis for 
$R_{n,k}$ with respect to lexicographic order.  Therefore, we need only show that $\AAA_{n,k} = \MMM_{n,k}$.

If $(a_1, \dots, a_n)$ is any $(n,k)$-staircase, a direct check shows that the monomial
$x_1^{a_1} \cdots x_n^{a_n} \in \QQ[\xx_n]$ is $(n,k)$-nonskip.
Therefore, we have $\AAA_{n,k} \subseteq \MMM_{n,k}$.

To verify the reverse containment, we show that the bijection
$\Psi: \OP_{n,k} \rightarrow \MMM_{n,k}$ of Theorem~\ref{psi-is-bijection} satisfies 
$\Psi(\OP_{n,k}) \subseteq \AAA_{n,k}$.
Let $\sigma \in \OP_{n,k}$ and let $\overline{\sigma}$ be the ordered set partition of size $n-1$ obtained
by removing $n$ from $\sigma$.

{\bf Case 1:}  {\em $\{n\}$ is not a singleton block of $\sigma$.}

In this case $\overline{\sigma} \in \OP_{n-1,k}$.
We may inductively assume that $\Psi(\overline{\sigma}) \in \AAA_{n-1,k}$.  Therefore, there exists an 
$(n-1,k)$-staircase $(a_1, \dots, a_{n-1})$ such that $\Psi(\overline{\sigma}) \mid x_1^{a_1} \cdots x_{n-1}^{a_{n-1}}$.
We have $\Psi(\sigma) = \Psi(\overline{\sigma}) \cdot x_n^i$ where $0 \leq i \leq k-1$.  Since 
$(a_1, \dots, a_{n-1}, k-1)$ is an $(n,k)$-staircase and 
$\Psi(\sigma) \mid x_1^{a_1} \cdots x_{n-1}^{a_{n-1}} x_n^{k-1}$, we have
$\Psi(\sigma) \in \AAA_{n,k}$.

{\bf Case 2:}  {\em $\{n\}$ is  a singleton block of $\sigma$.}

In this case $\overline{\sigma} \in \OP_{n-1,k-1}$.
We have $\Psi(\sigma) = \Psi(\overline{\sigma}) \cdot \mm(S) \cdot x_n^i$, where $S \subseteq [n-1]$ satisfies
$|S| = n-k$, $0 \leq i \leq k-1$, and $\xx(S) \mid (\Psi(\overline{\sigma}) \cdot \mm(S))$.  Consider the $(n,k)$-staircase
$(a_1, \dots, a_n)$ defined by $a_j = k-1$ if $j \in S$ or $j = n$.  

We claim $\Psi(\sigma) \mid x_1^{a_1} \cdots x_n^{a_n}$, so that $\Psi(\sigma) \in \AAA_{n,k}$.  To see this, write
$\Psi(\sigma) = x_1^{b_1} \cdots x_n^{b_n}$.  Since $\Psi(\sigma) \in \MMM_{n,k}$ we know $0 \leq b_j \leq k-1$
for all $j$, so that $b_j \leq a_j$ if $j \in S$ or $j = n$.  If $j \in [n-1] - S$ is such that $b_j > a_j$, the fact
that $\xx(S) \mid \Psi(\sigma)$ would imply that $\xx(S \cup \{j\}) \mid \Psi(\sigma)$, contradicting the fact that
$\Psi(\sigma) \in \MMM_{n,k}$.  We conclude that $\Psi(\sigma) \mid x_1^{a_1} \cdots x_n^{a_n}$, as desired.
\end{proof}

As an example of Theorem~\ref{artin-basis-theorem}, 
consider the case $(n,k) = (3,2)$.  The $(3,2)$-staircases are the shuffles
of the sequences $(0,1)$ and $(1)$, i.e. the sequences
$(0,1,1)$ and $(1,0,1)$.  The $(3,2)$-Artin monomials are therefore
\begin{equation*}
\AAA_{3,2} = \{x_2 x_3, x_1 x_3, x_1, x_2, x_3, 1\}.
\end{equation*}
This is precisely the set $\MMM_{3,2}$ of monomials in $\QQ[\xx_3]$ which are divisible by none of the monomials
in the list
\begin{equation*}
x_1^2, \, x_2^2, \, x_3^2, \, \xx(12) = x_1 x_2, \, \xx(13) = x_1 x_3^2, \, \xx(23) = x_2^2 x_3^2.
\end{equation*}
We conclude that $\AAA_{3,2} = \MMM_{3,2}$ descends to a basis of $R_{3,2}$, yielding the Hilbert series
\begin{equation*}
\Hilb(R_{3,2}; q) = 2q^2 + 3q + 1.
\end{equation*}
This agrees with the $\coinv$ distribution on $\OP_{3,2}$.
\begin{center}
$\begin{array}{c | c c c c c c}
\sigma & (12 \mid 3) & (1 \mid 23) & (13 \mid 2) & (2 \mid 13) & (3 \mid 12) & (23 \mid 1) \\ \hline
\coinv & 2 & 2 & 1 & 1 & 1 & 0
\end{array}$
\end{center}

We can also determine the reduced Gr\"obner basis of $I_{n,k}$.

\begin{theorem}
\label{reduced-groebner-basis-theorem}
Let $k \leq n$ be positive integers and let $<$ be the lexicographic monomial order.
  A Gr\"obner basis for $I_{n,k}$ with respect to $<$ is given by the variable powers
\begin{equation*}
x_1^k, x_2^k, \dots, x_n^k
\end{equation*}
together with the reverse Demazure characters
\begin{equation*}
\kappa_{\gamma(S)^*}(\xx_n^*)
\end{equation*}
for $S \subseteq [n-1]$ satisfying $|S| = n-k+1$.

If $k < n$, this Gr\"obner basis is the reduced Gr\"obner basis for $I_{n,k}$ with respect to $<$.
\end{theorem}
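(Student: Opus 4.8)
The plan is to establish the two assertions separately: first that the indicated set $G$ (the variable powers $x_1^k,\dots,x_n^k$ together with the reverse Demazure characters $\kappa_{\gamma(S)^*}(\xx_n^*)$ for $S\subseteq[n-1]$ with $|S|=n-k+1$) is a Gr\"obner basis for $I_{n,k}$ with respect to $<$, then that it is reduced when $k<n$.

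For the Gr\"obner basis claim, I would compute the monomial ideal $J$ generated by the leading terms of the members of $G$ and show $J=\initial_<(I_{n,k})$. Every element of $G$ lies in $I_{n,k}$: the powers $x_i^k$ by definition, and $\kappa_{\gamma(S)^*}(\xx_n^*)$ by Lemma~\ref{skip-monomials-in-initial}. By Lemma~\ref{reduced-demazure-lemma} we have $\initial_<(\kappa_{\gamma(S)^*}(\xx_n^*))=\xx(S)$, so
\[
J=\langle x_1^k,\dots,x_n^k\rangle+\langle \xx(S)\,:\,S\subseteq[n-1],\ |S|=n-k+1\rangle\subseteq\initial_<(I_{n,k}).
\]
A monomial lies outside $J$ exactly when it is divisible by no $x_i^k$ and by no $\xx(S)$ of the listed form; invoking the redundancy remark after Definition~\ref{nonskip-definition} (if $n\in S$ then $x_n^k\mid\xx(S)$), this is precisely the condition of being $(n,k)$-nonskip. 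Hence the standard monomials of $J$ are exactly $\MMM_{n,k}$. On the other hand, the proof of Theorem~\ref{hilbert-series-theorem} identifies $\MMM_{n,k}$ as the standard monomial basis $\BBB^{I}_{n,k}$ of $R_{n,k}$, i.e.\ the monomials outside $\initial_<(I_{n,k})$ are exactly $\MMM_{n,k}$. Two monomial ideals with the same complement among all monomials are equal, so $J=\initial_<(I_{n,k})$ and $G$ is a Gr\"obner basis.

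For the reducedness claim (with $k<n$, so each $S$ has $|S|=n-k+1\geq 2$), I would verify the two defining conditions. Condition (1) holds since $x_i^k$ is monic and, by the description of $\kappa_{\gamma(S)^*}$ as a multiplicity-free sum over semistandard skyline fillings whose leading monomial $\xx(S)$ comes from the unique ``big filling'' of $\gamma(S)$, that leading coefficient is also $1$. For condition (2) there are three kinds of pairs. For $x_i^k$ against $x_j^k$ with $i\neq j$: the sole monomial $x_j^k$ is not divisible by $x_i^k$. For $x_i^k$ against $\kappa_{\gamma(T)^*}(\xx_n^*)$: since $T\subseteq[n-1]$ we have $\max(T)-n+k+1\leq k$, so the middle conclusion of Lemma~\ref{reduced-demazure-lemma} (no monomial is divisible by $x_i^{\max(T)-n+k+1}$) yields that no monomial of $\kappa_{\gamma(T)^*}(\xx_n^*)$ is divisible by $x_i^k$. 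For $\kappa_{\gamma(S)^*}(\xx_n^*)$ against $\kappa_{\gamma(T)^*}(\xx_n^*)$ with $S\neq T$: the last conclusion of Lemma~\ref{reduced-demazure-lemma} says $\xx(S)$ divides no monomial of $\kappa_{\gamma(T)^*}(\xx_n^*)$; and $\xx(S)$, being supported on $|S|\geq 2$ variables, divides no $x_j^k$. This gives condition (2), hence that $G$ is the (unique) reduced Gr\"obner basis.

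I expect the heavy machinery — that the reverse Demazure characters lie in $I_{n,k}$ with skip-monomial leading terms, and that $\MMM_{n,k}$ enumerates $\OP_{n,k}$ — is already supplied by Lemmas~\ref{demazure-identity} and \ref{reduced-demazure-lemma} and Theorem~\ref{hilbert-series-theorem}, so the remaining work is largely bookkeeping. The point requiring care is the interplay of the two finer conclusions of Lemma~\ref{reduced-demazure-lemma} with the index restriction $S\subseteq[n-1]$: it is exactly this restriction that keeps every relevant exponent below $k$ and forbids any $\xx(S)$ from being divisible by a variable power or by another $\xx(T)$, and its breakdown at $k=n$ (where $n-k+1=1$ and each $x_i^n$ with $i<n$ becomes divisible by $\initial_<(\kappa_{\gamma(\{i\})^*}(\xx_n^*))=x_i^i$) is precisely why the ``reduced'' half of the theorem is stated only for $k<n$.
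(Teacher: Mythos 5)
Your proof is correct and follows the same route as the paper's: membership in $I_{n,k}$ via Lemma~\ref{skip-monomials-in-initial} (itself a consequence of Lemma~\ref{demazure-identity}), leading terms from Lemma~\ref{reduced-demazure-lemma}, a cardinality/complement argument using $\BBB^I_{n,k}=\MMM_{n,k}$ from the proof of Theorem~\ref{hilbert-series-theorem}, and the two finer conclusions of Lemma~\ref{reduced-demazure-lemma} for reducedness. Your write-up merely makes explicit the case analysis for reducedness (in particular the inequality $\max(T)-n+k+1\le k$ when $T\subseteq[n-1]$, and the fact that $|S|=n-k+1\ge 2$ when $k<n$) that the paper leaves implicit, and correctly notes the leading coefficient of $\kappa_{\gamma(S)^*}(\xx_n^*)$ is $1$ because the big filling is the unique SSK realizing the lexicographically largest reverse content.
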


When $k = n$,
the reduced Gr\"obner basis for the classical invariant ideal $I_{n,n}$ 
is $\{h_i(x_i, x_{i+1}, \dots, x_n) \,:\, 1 \leq i \leq n\}$ (see \cite[Sec. 7.2]{Bergeron}).
Since 
$h_i(x_i, x_{i+1}, \dots, x_n) = \kappa_{(0, \dots, 0, i, 0, \dots , 0)^*}(\xx_n^*)$
(where the $i$ in the vector $(0, \dots, 0, i, 0, \dots , 0)$ is in position $n-i$),
the last sentence of
Theorem~\ref{reduced-groebner-basis-theorem} is almost correct for 
$k = n$; one simply needs to throw out the variable powers $x_1^n, \dots x_{n-1}^n$.

\begin{proof}
By Lemma~\ref{demazure-identity} (and Equation~\ref{magical-demazure-equations-friend}) we know that 
the polynomials listed lie in $I_{n,k}$.  Lemma~\ref{reduced-demazure-lemma} and 
Theorem~\ref{artin-basis-theorem} tell us that the number of monomials in $\QQ[\xx_n]$
 which do not divide the leading terms of any of
the polynomials listed in the statement equals $\dim(R_{n,k})$.  It follows that the polynomials listed in the statement
form a Gr\"obner basis for $I_{n,k}$.

When $k < n$, Lemma~\ref{reduced-demazure-lemma} implies that, for any distinct polynomials $f, g$ listed in the 
statement, the monomial $\initial_<(f)$ has coefficient $1$ and does not divide any monomial in  $g$.  This 
implies the claim about reducedness.
\end{proof}

For example, consider the case $(n,k) = (6,4)$.  The reduced Gr\"obner basis of $I_{6,4} \subseteq \QQ[\xx_6]$
is given by 
the variable powers
\begin{equation*}
x_1^4, x_2^4, x_3^4, x_4^4, x_5^4, x_6^4
\end{equation*}
together with the reverse Demazure characters
\begin{align*}
&\kappa_{(0,0,0,1,1,1)}(\xx_6^*), \kappa_{(0,0,2,0,1,1)}(\xx_6^*),
\kappa_{(0,3,0,0,1,1)}(\xx_6^*), \kappa_{(0,0,2,2,0,1)}(\xx_6^*), \kappa_{(0,3,0,2,0,1)}(\xx_6^*), \\
&\kappa_{(0,3,3,0,0,1)}(\xx_6^*),
\kappa_{(0,0,2,2,2,0)}(\xx_6^*), \kappa_{(0,3,0,2,2,0)}(\xx_6^*), \kappa_{(0,3,3,0,2,0)}(\xx_6^*),
\kappa_{(0,3,3,3,0,0)}(\xx_6^*).
\end{align*}

The authors find 
Theorem~\ref{reduced-groebner-basis-theorem}  mysterious and know of no 
conceptual reason to expect Demazure characters to appear as Gr\"obner basis elements of $I_{n,k}$.

\section{Generalized Garsia-Stanton basis}
\label{Garsia-Stanton}

\subsection{The classical Garsia-Stanton basis for $R_n$}
The {\em Lehmer code} of a permutation $\pi = \pi_1 \dots \pi_n \in \symm_n$
is the word $c(\pi) = c(\pi)_1 \dots c(\pi)_n$, where 
$c(\pi)_i = | \{j < i \,:\, \pi_i < \pi_j \}$.  The map $\pi \mapsto c(\pi)$ provides a bijection from $\symm_n$
to the set of words which are componentwise less than $(0, 1, \dots, n-1)$.  The classical Artin monomial basis
of $R_n$ therefore witnesses the fact that
$\Hilb(R_n; q) = \sum_{\pi \in S_n} q^{c(\pi)_1 + \cdots + c(\pi)_n} = \sum_{w \in \symm_n} q^{\inv(\pi)}$.

Just as the Artin basis for $R_n$ is related to the Lehmer code sum or $\inv$ statistic on $\symm_n$,
the Garsia-Stanton basis for $R_n$ is related to the major index statistic $\maj$ on $\symm_n$.
Given a permutation $\pi = \pi_1 \dots \pi_n \in \symm_n$, the {\em Garsia-Stanton monomial}
$gs_{\pi}$ is the product
\begin{equation}
gs_{\pi} := \prod_{i \in \Des(\pi)} x_{\pi_1} x_{\pi_2} \cdots x_{\pi_i} \in \QQ[\xx_n].
\end{equation}
For example, if $\pi = 34256187$ we have
\begin{equation*}
gs_{\pi} = (x_3 x_4) \cdot (x_3 x_4 x_2 x_5 x_6) \cdot (x_3 x_4 x_2 x_5 x_6 x_1 x_8) \in \QQ[\xx_8].
\end{equation*}
It is clear that $\deg(gs_{\pi}) = \maj(\pi)$ for any permutation $\pi \in \symm_n$.

Garsia used Stanley-Reisner theory to show that $\{gs_{\pi} \,:\, \pi \in \symm_n\}$ descends to
a basis of $R_n$ \cite{Garsia}; this basis was then studied by Garsia and Stanton in the context of invariant
theory \cite{GS}.  
Adin, Brenti, and Roichman later gave a different proof of this fact using a straightening argument \cite{ABR}.
This witnesses the fact that
$\Hilb(R_n; q) = \sum_{\pi \in \symm_n} q^{\maj(\pi)}$.
The GS monomials were also studied by 
E. E. Allen under the name `descent monomials' \cite{Allen}.

\subsection{The generalized Garsia-Stanton basis for $R_{n,k}$}
Remmel and Wilson proved that $\inv$ and $\maj$ share the same distribution on $\OP_{n,k}$ \cite{RW}.  
Taking complementary
statistics, we get
\begin{equation*}
\sum_{\sigma \in \OP_{n,k}} q^{\coinv(\sigma)} = \sum_{\sigma \in \OP_{n,k}} q^{\comaj(\sigma)}.
\end{equation*}
Theorem~\ref{artin-basis-theorem} shows that the $(n,k)$-Artin basis $\AAA_{n,k}$ of $R_{n,k}$ witnesses the fact that
the left hand side is $\Hilb(R_{n,k}; q)$.  It is natural to ask for a generalized Garsia-Stanton basis which witnesses the 
fact that the right hand side is also $\Hilb(R_{n,k}; q)$.  We will provide such a basis.

To start, we will recast the right hand side in a more illuminating form.

\begin{lemma}
\label{comaj-generating-function}
We have 
\begin{equation}
\sum_{\sigma \in \OP_{n,k}} q^{\comaj(\sigma)} = \sum_{\pi \in \symm_n} q^{\maj(\pi)} \cdot {\asc(\pi) \brack n-k}_q.
\end{equation}
\end{lemma}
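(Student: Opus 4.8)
The plan is to work entirely in the ascent-starred permutation model of $\OP_{n,k}$. Recall from Section~\ref{Background} that $\OP_{n,k}$ is identified with the set of pairs $(\pi,S)$ where $\pi\in\symm_n$ and $S\subseteq\Asc(\pi)$ with $|S|=n-k$. Grouping the left-hand sum according to the underlying permutation $\pi$, it suffices to prove that for each fixed $\pi\in\symm_n$,
\begin{equation*}
\sum_{\substack{S\subseteq\Asc(\pi)\\ |S|=n-k}} q^{\comaj(\pi,S)} = q^{\maj(\pi)}\, {\asc(\pi)\brack n-k}_q .
\end{equation*}
Summing this over all $\pi\in\symm_n$ will then give the lemma.

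First I would unwind the definitions. Using $\comaj(\pi,S)=(n-k)(k-1)+{k\choose 2}-\maj(\pi,S)$ and $\maj(\pi,S)=\maj(\pi^c)-\sum_{i\in S}|\Asc(\pi)\cap\{i,i+1,\dots,n-1\}|$, together with the observation that $\Des(\pi^c)=\Asc(\pi)$ and $\Des(\pi)\sqcup\Asc(\pi)=\{1,\dots,n-1\}$, hence $\maj(\pi^c)={n\choose 2}-\maj(\pi)$, one obtains
\begin{equation*}
\comaj(\pi,S)=\maj(\pi)+c(n,k)+\sum_{i\in S}\bigl|\Asc(\pi)\cap\{i,\dots,n-1\}\bigr|,
\qquad c(n,k):=(n-k)(k-1)+{k\choose 2}-{n\choose 2},
\end{equation*}
where $c(n,k)$ is independent of $\pi$ and $S$. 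Next comes the core computation: writing $\Asc(\pi)=\{b_1<\cdots<b_a\}$ with $a=\asc(\pi)$, for $i=b_j$ we have $|\Asc(\pi)\cap\{i,\dots,n-1\}|=a-j+1$, so choosing an $(n-k)$-subset $S$ of $\Asc(\pi)$ amounts to choosing $1\le j_1<\cdots<j_{n-k}\le a$ with weight $\sum_\ell(a-j_\ell+1)$. After the reindexing $j_\ell\leftrightarrow a+1-j_{n-k+1-\ell}$ this sum becomes the standard $q$-binomial identity
\begin{equation*}
\sum_{1\le j'_1<\cdots<j'_{n-k}\le a} q^{j'_1+\cdots+j'_{n-k}} = q^{{n-k+1\choose 2}}\,{a\brack n-k}_q ,
\end{equation*}
so that $\sum_{S}q^{\comaj(\pi,S)}=q^{\maj(\pi)+c(n,k)+{n-k+1\choose 2}}\,{\asc(\pi)\brack n-k}_q$.

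Finally I would verify the elementary identity $c(n,k)+{n-k+1\choose 2}=0$, i.e.\ $(n-k)(k-1)+{k\choose 2}-{n\choose 2}+{n-k+1\choose 2}=0$; setting $m=n-k$, both the linear and the quadratic terms in $m$ cancel after expansion, so the prefactor collapses to $q^{\maj(\pi)}$ and summing over $\pi$ completes the proof. There is no deep obstacle here; the only place demanding care is the bookkeeping through the definitions of $\maj$ and $\comaj$ on ordered set partitions in the ascent-starred model — in particular the relation $\maj(\pi^c)={n\choose 2}-\maj(\pi)$ and getting the reindexing in the $q$-binomial sum exactly right so that the stray power of $q$ cancels the constant $c(n,k)$.
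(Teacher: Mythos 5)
Your proof is correct and follows essentially the same approach as the paper: both group by the underlying permutation $\pi$ in the ascent-starred model and reduce to the $q$-binomial (box/partition-size) interpretation of ${\asc(\pi)\brack n-k}_q$. Your version simply makes the constant bookkeeping and the subset-sum identity $\sum_{1\le j'_1<\cdots<j'_{m}\le a} q^{j'_1+\cdots+j'_{m}} = q^{\binom{m+1}{2}}{a\brack m}_q$ explicit where the paper invokes the box interpretation directly.
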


\begin{proof}
We consider the model for ordered set partitions in $\OP_{n,k}$ as ascent starred permutations 
$(\pi, S)$, where $\pi = \pi_1 \cdots \pi_n \in \symm_n$ and $S$ is a set of ascents of $w$ with $|S| = n-k$.  
For a fixed permutation $\pi$, there are ${\asc(\pi) \choose n-k}$ choices for the set of stars $S$.
For example, if $(n,k) = (6,4)$ and $\pi = 245136 \in \symm_6$ we have $n-k = 2$ and the ${4 \choose 2}$
elements of $\OP_{6,4}$ shown below, together with the values of $\maj$ and $\comaj$. 
\begin{center}
$\begin{array}{c|cccccc} \sigma &
2_* 4_* 5 \, \, 1 \, \, 3 \, \, 6 & 2_* 4 \, \, 5 \, \, 1_* 3 \, \, 6 & 2_* 4 \, \, 5 \, \, 1 \, \, 3_* 6 &
2 \, \, 4_* 5 \, \, 1_* 3 \, \, 6 & 2 \, \, 4_* 5 \, \, 1 \, \, 3_* 6 & 2 \, \, 4 \, \, 5 \, \, 1_* 3_* 6  \\ \hline
\maj & 5 & 6 & 7 & 7 & 8 & 9 \\ \hline
\comaj & 7 & 6 & 5 & 5 & 4 & 3
\end{array}$
\end{center}

We claim that, for $\pi \in \symm_n$ fixed, we have 
\begin{equation}
\sum_{\substack{S \subseteq \Asc(\pi) \\ |S| = n-k}} q^{\comaj(\pi,S)} = q^{\maj(\pi)} \cdot {\asc(\pi) \brack n-k}_q.
\end{equation}
Indeed, the standard combinatorial 
interpretation for ${\asc(\pi) \brack n-k}_q$ is as the generating function for size on partitions
$\lambda$ fitting inside a box of dimensions $(n-k) \times (\asc(\pi) - n + k)$.  For $\pi$ fixed, the choice
of $\lambda$ corresponds to the choice of $S \subseteq \Asc(\pi)$ in the ordered set partition $(\pi, S)$.
The definition of $\comaj$ yields the factor of $q^{\maj(\pi)}$.
\end{proof}

We introduce the following generalization of the GS monomials.

\begin{defn}
Let $k \leq n$ be positive integers.  The {\em $(n,k)$-Garsia-Stanton monomials} $\GS_{n,k}$
are given by
\begin{equation*}
\GS_{n,k} := \{ gs_{\pi} \cdot x_{\pi_1}^{i_1} x_{\pi_2}^{i_2} \cdots x_{\pi_{n-k}}^{i_{n-k}} \,:\, 
\pi \in \symm_n, \des(\pi) < k, \text{ and } (k - \des(\pi)) > i_1 \geq i_2 \geq \cdots \geq i_{n-k} \geq 0\}.
\end{equation*}
\end{defn} 

When $k = n$ we recover the usual GS monomials.

As an example of these monomials, take $(n,k) = (6,3)$ and consider the permutation
$\pi = 356124 \in \symm_6$.  We have $n-k = 3$, 
$k - \des(\pi) = 2$ and $gs_{\pi} = x_3 x_5 x_6$.  This gives rise to the following four elements of $\GS_{6,3}$:
\begin{equation*}
(x_3 x_5 x_6) \cdot 1, \,\,\,\,\,\,  (x_3 x_5 x_6) \cdot (x_3), \,\,\,\,\,\,
 (x_3 x_5 x_6) \cdot (x_3 x_5), \,\,\,\,\,\, (x_3 x_5 x_6) \cdot (x_3 x_5 x_6).
\end{equation*}

In general, if $\pi \in \symm_n$ satisfies $\des(\pi) < k$, we have 
\begin{align}
\sum_{(k - \des(\pi)) > i_1 \geq i_2 \geq \cdots \geq i_{n-k} \geq 0} 
q^{\mathrm{deg}(gs_{\pi} \cdot x_{\pi_1}^{i_1} x_{\pi_2}^{i_2} \cdots x_{\pi_{n-k}}^{i_{n-k}})} &=
q^{\maj(\pi)} \cdot {(k - \des(\pi) - 1) + (n-k) \brack n-k}_q \\
&= q^{\maj(\pi)} \cdot {n - \des(\pi) - 1 \brack n-k}_q \\
&= q^{\maj(\pi)} \cdot {\mathrm{asc}(\pi) \brack n-k}_q,
\end{align}
which agrees with the summand corresponding to $\pi$ in Lemma~\ref{comaj-generating-function}.
In particular, we have $|\GS_{n,k}| \leq |\OP_{n,k}|$.  We cannot yet assert equality because two  
$(n,k)$-GS monomials could {\em a priori} coincide for different choices of $\pi$.
The following theorem guarantees that this does not actually happen.

\begin{theorem}
\label{garsia-stanton-basis-theorem}
Let $k \leq n$ be positive integers.  The set $\GS_{n,k}$ of $(n,k)$-Garsia-Stanton monomials descends 
to a basis for $R_{n,k}$.
\end{theorem}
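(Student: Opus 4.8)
The plan is to reduce the whole statement to a spanning claim. By Theorem~\ref{hilbert-series-theorem} we have $\dim_{\QQ} R_{n,k} = |\OP_{n,k}|$, and the discussion preceding the theorem (resting on Lemma~\ref{comaj-generating-function}) shows that $|\GS_{n,k}| \le |\OP_{n,k}|$, with equality precisely when the monomials in $\GS_{n,k}$ are pairwise distinct, in which case their degree generating function is $\sum_{\pi \in \symm_n} q^{\maj(\pi)} {\asc(\pi) \brack n-k}_q = \Hilb(R_{n,k};q)$. So it is enough to prove that the images of $\GS_{n,k}$ span $R_{n,k}$: a spanning set of size at most $\dim_{\QQ} R_{n,k}$ is automatically a basis, and the distinctness and cardinality claims for $\GS_{n,k}$ then come for free. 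Thus the goal is to establish $\mathrm{span}_{\QQ}(\GS_{n,k}) + I_{n,k} = \QQ[\xx_n]$.

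First I would invoke the classical descent-basis normal form for monomials underlying the Garsia--Stanton basis of $R_n$ (and used in the straightening of Adin--Brenti--Roichman): every monomial of $\QQ[\xx_n]$ equals $gs_{\pi}\cdot x_{\pi_1}^{\mu_1} x_{\pi_2}^{\mu_2}\cdots x_{\pi_n}^{\mu_n}$ for a unique pair consisting of a permutation $\pi \in \symm_n$ (the one sorting the exponent vector weakly downward, ties broken by index) and a partition $\mu = (\mu_1 \ge \mu_2 \ge \cdots \ge \mu_n \ge 0)$; this is elementary. In such a monomial the largest exponent present is $\des(\pi)+\mu_1$, carried by $x_{\pi_1}$. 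If $\des(\pi)+\mu_1 \ge k$ then $x_{\pi_1}^k$ divides the monomial, and since $x_{\pi_1}^k \in I_{n,k}$ the monomial is $\equiv 0$ in $R_{n,k}$; in particular every monomial whose normal form has $\des(\pi)\ge k$ dies. So modulo $I_{n,k}$ we may restrict to monomials $gs_{\pi}x^{\mu}$ with $\des(\pi)<k$ and $\mu_1 < k-\des(\pi)$, which are exactly the first conditions defining $\GS_{n,k}$. If moreover $\mu$ has at most $n-k$ nonzero parts, then $x^{\mu} = x_{\pi_1}^{\mu_1}\cdots x_{\pi_{n-k}}^{\mu_{n-k}}$ and $gs_{\pi}x^{\mu}\in\GS_{n,k}$ already; so the only remaining case is a monomial $gs_{\pi}x^{\mu}$ all of whose exponents are $<k$ but with $\mu_{n-k+1}\ge 1$.

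To handle that case I would use the relation $e_{n-k+1}(\xx_n) \in I_{n,k}$. Since $\mu_1 \ge \cdots \ge \mu_{n-k+1}\ge 1$, the squarefree monomial $x_{\pi_1}x_{\pi_2}\cdots x_{\pi_{n-k+1}}$ divides $x^{\mu}$, and writing $x_{\pi_1}\cdots x_{\pi_{n-k+1}} = e_{n-k+1}(\xx_n) - \sum_{B} x_B$, with $x_B := \prod_{i\in B}x_i$ and $B$ ranging over the $(n-k+1)$-subsets of $[n]$ other than $\{\pi_1,\dots,\pi_{n-k+1}\}$, rewrites $gs_{\pi}x^{\mu}$ modulo $I_{n,k}$ as a $\pm 1$-combination of monomials $gs_{\pi}\cdot\bigl(x^{\mu}/(x_{\pi_1}\cdots x_{\pi_{n-k+1}})\bigr)\cdot x_B$ of the same total degree. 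Each such replacement changes the exponent of any single variable by at most $+1$, so any surviving term (one not divisible by some $x_i^k$, which would make it $\equiv 0$) again has all exponents $<k$; and since the exponent vector of $gs_{\pi}x^{\mu}$ is weakly decreasing along $\pi$, the variables $x_{\pi_1},\dots,x_{\pi_{n-k+1}}$ carry its $n-k+1$ largest exponents, so the replacement moves exponent mass away from the largest coordinates. Re-expressing each surviving monomial in descent-basis normal form and repeating, I expect the number of nonzero parts of the $\mu$-part to be driven down until it is $\le n-k$, at which point every remaining monomial lies in $\GS_{n,k}$; combined with the previous paragraph this yields the spanning statement, and hence the theorem.

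The main obstacle is the termination of this straightening. One must fix a well-founded order on monomials --- refining the dominance order on the sorted exponent partition by an auxiliary statistic such as $\maj$ of the descent-sorting permutation, or the number of parts of $\mu$ beyond the $(n-k)$-th --- and verify that passing back to normal form after applying the $e_{n-k+1}$ relation strictly lowers every term that is not already $\equiv 0$ or in $\GS_{n,k}$, taking particular care that the re-sorting step never pushes a term up in the order. Since all exponents of the relevant monomials stay below $k$, the monomials involved lie in a finite set, so once such an order is in place the induction closes; pinning down the order and checking this monotonicity is where the real work lies, everything else being routine.
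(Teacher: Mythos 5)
The high-level strategy matches the paper exactly: reduce to a spanning claim via Theorem~\ref{hilbert-series-theorem} and the count $|\GS_{n,k}|\le|\OP_{n,k}|$, then straighten an arbitrary monomial using the relations $x_i^k\equiv 0$ and $e_{n-k+1}(\xx_n)\equiv 0$. You also correctly identify the three regimes --- $\des(\pi(m))\ge k$ or exponent $\ge k$ (kill outright), $\mu$ with at most $n-k$ nonzero parts (already in $\GS_{n,k}$), and the remaining case requiring a rewriting. All of that is sound.

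The gap is in the rewriting step, and it is a genuine one, not just an undeveloped detail. You propose to substitute $x_{\pi_1}\cdots x_{\pi_{n-k+1}}=e_{n-k+1}(\xx_n)-\sum_{B}x_B$ directly into $gs_\pi x^\mu$ and then argue that the new monomials are smaller in a suitable order refining dominance on the sorted exponent vector. But the heuristic ``the replacement moves exponent mass away from the largest coordinates'' fails in the precise sense needed: the sorted exponent vector can strictly \emph{increase} in dominance. For example with $n=4$, $k=3$, $m=x_1^2x_2x_3$ (so $n-k+1=2$, squarefree prefix $x_1x_2$), taking $B=\{1,3\}$ gives $m_B=x_1^2x_3^2$, whose sorted exponent vector $(2,2,0,0)$ strictly dominates $(2,1,1,0)$; both monomials survive (all exponents $<k$), so a dominance-led induction does not close. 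The paper sidesteps this entirely by invoking Adin--Brenti--Roichman's straightening lemma \cite[Lem.~3.5]{ABR}: $m=gs_{\pi(m)}\cdot e_{\mu(m)}(\xx_n)-\Sigma$ with $\Sigma$ a combination of monomials strictly lower in a specific order $\prec$ (dominance on $\lambda(m)$, refined by $\inv$ of the sorting permutation). That relation multiplies by the full product $e_{\mu(m)}(\xx_n)$ rather than a single $e_{n-k+1}$ and, crucially, the cited lemma already certifies that $m$ is the unique $\prec$-top term of $gs_{\pi(m)}e_{\mu(m)}(\xx_n)$. Once one has that, the case $\mu(m)_1>n-k$ is immediate because the factor $e_{\mu(m)_1}(\xx_n)\in I_{n,k}$. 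Your approach would need either to prove a termination statement for your different substitution (which the dominance counterexample shows cannot be done with the order you name), or to replace it with the ABR relation, at which point you have reproduced the paper's proof.
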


\begin{proof}

The strategy for proving $\GS_{n,k}$ is a basis of $R_{n,k}$ is to apply the 
{\em straightening algorithm} of Adin, Brenti, and Roichman \cite{ABR}.  To describe this algorithm,
we will need a partial order on monomials in $\QQ[\xx_n]$.  

Given any  monomial
$m = x_1^{a_1} \cdots x_n^{a_n} \in \QQ[\xx_n]$, 
let $\lambda(m) = \mathrm{sort}(a_1, \dots, a_n)$ be the 
partition obtained by sorting the exponent sequence of $m$ into weakly decreasing order.
Let $\pi(m) = \pi_1 \dots \pi_n \in \symm_n$ be the permutation obtained by listing the indices
of variables in decreasing order of exponents in $m$, breaking ties by choosing  smaller indexed
variables first.  For example, if $m = x_1^3 x_2^4 x_3^0 x_4^2 x_5^2 x_6^0 x_7^0$,
we have 
$\lambda(m) = (4,3,2,2)$ and $\pi(m) = 2145367$.  If $m, m'$ are any two monomials in 
$\QQ[\xx_n]$, we say $m \prec m'$ if $\deg(m) = \deg(m')$ and 
\begin{itemize}
\item  $\lambda(m) < \lambda(m')$ in dominance order, or
\item  $\lambda(m) = \lambda(m')$ but $\inv(\pi(m')) > \inv(\pi(m))$.
\end{itemize}

If $\pi = \pi_1 \dots \pi_n$ is any permutation in $\symm_n$, let $d(\pi) = (d_1, \dots, d_n)$ be the length
$n$ vector given by
$$
d_j = |\Des(\pi) \cap \{j, j+1, \dots, n\}|.
$$
For example, we have $d(2145367) = (2,1,1,1,0,0,0)$. 
We have $d_1 = \des(\pi)$.
 If $m$ is a monomial in $\QQ[\xx_n]$,
let $d(m) := d(\pi(m))$.  It is shown in \cite{ABR} that the componentwise difference
$\lambda(m) - d(m)$ is an integer partition (i.e., has weakly decreasing components).
Let $\mu(m)$ be the {\em conjugate} of this difference:  $\mu(m)' = \lambda(m) - d(m)$.
In our example, we have $\mu(m)' = (2,2,1,1,0,0,0)$ so that $\mu(m) = (4,2)$.

Adin, Brenti, and Roichman derive the following {\em straightening lemma}
\cite[Lem. 3.5]{ABR}.  If $m$ is any monomial in $\QQ[\xx_n]$ we may write
\begin{equation}
\label{straighten}
m = gs_{\pi(m)} \cdot e_{\mu(m)}(\xx_n) - \Sigma,
\end{equation}
where $\Sigma$ is a linear combination of monomials $m' \prec m$.
This implies that the classical GS monomials span $R_{n,n}$.
Together with the fact that $\dim(R_{n,n}) = n!$, we get that the classical GS monomials 
are a basis for $R_{n,n}$.  

We want to apply the straightening relation
$(\ref{straighten})$ to prove that $\GS_{n,k}$ spans $R_{n,k}$.  Since we know 
$|\GS_{n,k}| \leq \dim(R_{n,k})$, this would prove that $\GS_{n,k}$ is a basis 
of $R_{n,k}$.

Consider a monomial $m$ in $\QQ[\xx_n]$.  We argue that the class 
$m + I_{n,k}$ lies in the span of $\GS_{n,k}$.
All of the equivalences in the following paragraphs are modulo the ideal $I_{n,k}$.

If $m$ is minimal with respect to
$\prec$, then $m$ must have the form $m = x_1^0 \cdots x_{r-1}^0 x_r^1 \cdots x_n^1$
for some $r$.  If $r = 1$, then $m = e_n(\xx_n) \equiv 0$.  If $r > 1$,
then $m = gs_{r (r+1) \dots n 1 2 \dots (r-1)}$, so that $m+I_{n,k}$ lies in the span of 
$\GS_{n,k}$.

By the last paragraph, we may inductively assume that if $m'$ is any monomial such that
$m' \prec m$,
the coset $m' + I_{n,k}$ 
lies in the span of the monomials in $\GS_{n,k}$.
Apply the straightening relation (\ref{straighten}) to $m$, yielding
\begin{equation}
m = gs_{\pi(m)} \cdot e_{\mu(m)}(\xx_n) - \Sigma.
\end{equation}
By induction, the element $\Sigma + I_{n,k}$ lies in the span of $\GS_{n,k}$.

For any $\pi \in \symm_n$, the definition of GS monomials implies that $\des(\pi)$ is the exponent
of $x_{\pi_1}$ in the GS monomial $gs_{\pi}$.
In particular,
if $\des(\pi(m)) \geq k$, then 
$gs_{\pi(m)} \cdot e_{\mu(m)}(\xx_n) \equiv gs_{\pi(m)} \equiv 0$ and
$m + I_{n,k}$  lies in the span of $\GS_{n,k}$.
If $\mu(m)_1 > n-k$ then 
$gs_{\pi(m)} \cdot e_{\mu(m)}(\xx_n) \equiv e_{\mu(m)}(\xx_n) \equiv e_{\mu(m)_1}(\xx_n) \equiv 0$, 
so that $m + I_{n,k}$ lies in the span of $\GS_{n,k}$.

We are therefore reduced to the situation where $m$ is a monomial in $\QQ[\xx_n]$
with 
\begin{equation*}
\des(\pi(m)) < k \text{ and } \mu(m)_1 \leq n-k.
\end{equation*}
We claim that any such monomial $m$
 actually lies in $\GS_{n,k}$, or satisfies $m \equiv 0$.
Indeed, write $\pi(m) = \pi_1 \dots \pi_n$, so that
\begin{equation*}
m = x_{\pi_1}^{\lambda(m)_1} x_{\pi_2}^{\lambda(m)_2} \cdots x_{\pi_n}^{\lambda(m)_n}.
\end{equation*}
The GS monomial $gs_{\pi(m)}$ is equal to
\begin{equation*}
gs_{\pi(m)} = x_{\pi_1}^{d_1(m)} x_{\pi_2}^{d_2(m)} \cdots x_{\pi_n}^{d_n(m)}.
\end{equation*}
By the definition of $\mu(m)$ we have
\begin{equation*}
m = gs_{\pi(m)} \cdot x_{\pi_1}^{\mu(m)'_1} x_{\pi_2}^{\mu(m)'_2} \cdots x_{\pi_n}^{\mu(m)'_n}.
\end{equation*}
Since $\mu(m)_1 \leq n-k$, we know that 
\begin{equation*}
m = gs_{\pi(m)} \cdot x_{\pi_1}^{\mu(m)'_1} x_{\pi_2}^{\mu(m)'_2} \cdots x_{\pi_{n-k}}^{\mu(m)'_{n-k}},
\end{equation*}
where $\mu(m)'_1 \geq \cdots \geq \mu(m)'_{n-k} \geq 0$.  As long as 
$\mu(m)'_1 \leq k - \des(\pi) = k - d_1(m)$, we have $m \in \GS_{n,k}$.
If $\mu(m)'_1 > k - \des(\pi)$, then $x_{\pi_1}^k \mid m$, so that $m \equiv 0$.
\end{proof}

The proof of Theorem~\ref{garsia-stanton-basis-theorem} is an extension of the proof of Adin, Brenti,
and Roichman that the classical GS monomials form a basis for $R_n$ \cite{ABR}.
It could be interesting to find an alternative proof of Theorem~\ref{garsia-stanton-basis-theorem}
which extends the original poset-theoretic proof of Garsia \cite{Garsia}.

\section{Frobenius series}
\label{Frobenius series}

\subsection{The rings $R_{n,k,s}$}
The goal of this section is to prove our formula for the {\em graded}
Frobenius character of $R_{n,k}$.  To do this, we will need a one-parameter extension
$R_{n,k,s}$ of these rings.

\begin{defn}
Let $s \leq k \leq n$ be positive integers.  Define $I_{n,k,s} \subseteq \QQ[\xx_n]$
to be the ideal 
\begin{equation*}
I_{n,k,s} := \langle x_1^k , x_2^k, \dots, x_n^k, e_n(\xx_n), e_{n-1}(\xx_n), \dots, e_{n-s+1}(\xx_n) \rangle.
\end{equation*}
Let $R_{n,k,s} := \frac{\QQ[\xx_n]}{I_{n,k,s}}$ be the corresponding quotient ring.
\end{defn}

We have $I_{n,k,1} \subseteq I_{n,k,2} \subseteq \cdots \subseteq I_{n,k,k} = I_{n,k}$, so that 
$R_{n,k,k} = R_{n,k}$.  The ideals $I_{n,k,s}$ are homogeneous and stable under the action of 
$\symm_n$, so that the $R_{n,k,s}$ are graded $\symm_n$-modules.

We want to give a combinatorial model for the rings $R_{n,k,s}$.  To do so, for $s \leq k \leq n$
define $\OP_{n,k,s}$ to be the collection of $k$-block ordered set partitions 
$\sigma = (B_1 \mid \cdots \mid B_k)$
of $[n+(k-s)]$
such that, for $1 \leq i \leq k-s$, we have
$n + i \in B_{s+i}$.  For example, we have
\begin{equation*}
(46 \mid 1 \mid 23{\bf 7} \mid {\bf 8} \mid 5 {\bf 9}) \in \OP_{6,5,2}.
\end{equation*}
We will refer to the letters $n+1, n+2, \dots, n+k-s$ of $\sigma \in \OP_{n,k,s}$ as {\em big}; these 
are shown in bold above.  The other letters $1, 2, \dots, n$ will be called {\em small}.

The symmetric group $\symm_n$ acts on $\OP_{n,k,s}$ by permuting the small letters.  
We will identify this action with $R_{n,k,s}$ in a manner completely analogous to the analysis in 
Section~\ref{Hilbert series}; we will be more brief this time.
Let us model this action with
a point set.  Fix distinct rational numbers $\alpha_1, \dots, \alpha_k$.

\begin{defn}
Let $Y_{n,k,s} \subseteq \QQ^{n + k-s}$ be the set of points $(y_1, \dots, y_n, y_{n+1}, \dots , y_{n+k-s})$ such that
\begin{itemize}
\item  $\{y_1, \dots, y_n, y_{n+1}, \dots, y_{n+k-s}\} = \{\alpha_1, \dots, \alpha_k\}$, and
\item  $y_{n+i} = \alpha_{s+i}$ for all $1 \leq i \leq k-s$.
\end{itemize}
\end{defn}

It is evident that the action of $\symm_n$ on the small coordinates gives an identification of 
$\symm_n$-modules
$\QQ[\OP_{n,k,s}] = \QQ[Y_{n,k,s}]$.

Let $\II(Y_{n,k,s})$ be the ideal of polynomials in $\QQ[\xx_{n+k-s}]$ which vanish on $Y_{n,k,s}$ and let 
$\TT(Y_{n,k,s})$ be the corresponding top component ideal.  Since $x_{n+i} - \alpha_{n+i} \in \II(Y_{n,k,s})$ 
for all $1 \leq i \leq k-s$, we know that $x_{n+i} \in \TT(Y_{n,k,s})$.  Let 
$\zeta: \QQ[\xx_{n+k-s}] \twoheadrightarrow \QQ[\xx_n]$ be the evaluation map which sets $\zeta(x_{n+i}) = 0$
for all $1 \leq i \leq k-s$.  Let $T_{n,k,s} := \zeta(\TT(Y_{n,k,s}))$.  It follows that $T_{n,k,s} \subseteq \QQ[\xx_n]$
is an ideal in $\QQ[\xx_n]$ and we have  isomorphisms of $\symm_n$-modules 
\begin{equation*}
\QQ[\OP_{n,k,s}] \cong
\frac{\QQ[\xx_{n+k-s}]}{\II(Y_{n,k,s})} \cong \frac{\QQ[\xx_{n+k-s}]}{\TT(Y_{n,k,s})} \cong \frac{\QQ[\xx_n]}{T_{n,k,s}}.
\end{equation*}

We prove a generalization of Lemma~\ref{I-contained-in-T} to the ideals $I_{n,k,s}$.

\begin{lemma}
\label{I-contained-in-T-generalized}
Let $s \leq k \leq n$.
We have $I_{n,k,s} \subseteq T_{n,k,s}$.
\end{lemma}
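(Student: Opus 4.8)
The plan is to run the argument of Lemma~\ref{I-contained-in-T} one level up, in the ring $\QQ[\xx_{n+k-s}]$, and then push the conclusion down through the evaluation map $\zeta$. Since $T_{n,k,s} = \zeta(\TT(Y_{n,k,s}))$ is already known to be an ideal of $\QQ[\xx_n]$, it suffices to show that each of the generators $x_1^k, \dots, x_n^k, e_n(\xx_n), \dots, e_{n-s+1}(\xx_n)$ of $I_{n,k,s}$ is the $\zeta$-image of some element of $\TT(Y_{n,k,s})$.

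For the variable powers I would note that for $1 \le i \le n$ the $i$th coordinate of every point of $Y_{n,k,s}$ lies in $\{\alpha_1, \dots, \alpha_k\}$, so $(x_i - \alpha_1)\cdots(x_i - \alpha_k) \in \II(Y_{n,k,s})$; its top-degree component is $x_i^k$, so $x_i^k \in \TT(Y_{n,k,s})$, and $\zeta(x_i^k) = x_i^k$ since $i \le n$. For the elementary symmetric polynomials, the key observation is that every point $(y_1, \dots, y_{n+k-s}) \in Y_{n,k,s}$ satisfies $\{y_1, \dots, y_{n+k-s}\} = \{\alpha_1, \dots, \alpha_k\}$ as sets (the forced coordinates $y_{n+1}, \dots, y_{n+k-s}$ already realize $\alpha_{s+1}, \dots, \alpha_k$). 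Applying Lemma~\ref{vanishing-lemma} with its ``$n$'' taken to be $N := n+k-s$ and its ``$k$'' unchanged (legitimate since $s \le n$ forces $k \le N$), I obtain that for each $N-k+1 \le r \le N$, i.e.\ each $n-s+1 \le r \le n+k-s$, the polynomial
\begin{equation*}
f_r := \sum_{j=0}^{r} (-1)^j h_j(\alpha_1, \dots, \alpha_k)\, e_{r-j}(\xx_{n+k-s})
\end{equation*}
vanishes on $Y_{n,k,s}$, hence lies in $\II(Y_{n,k,s})$. The coefficients $h_j(\alpha_1,\dots,\alpha_k)$ are scalars and $\deg e_{r-j}(\xx_{n+k-s}) = r-j < r$ for $j>0$, so $\tau(f_r) = e_r(\xx_{n+k-s}) \in \TT(Y_{n,k,s})$; and $\zeta(e_r(\xx_{n+k-s})) = e_r(\xx_n)$ since setting $x_{n+1} = \cdots = x_{n+k-s} = 0$ in $e_r$ simply deletes the monomials divisible by a big variable. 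As the generator range $n-s+1 \le r \le n$ of $I_{n,k,s}$ sits inside $n-s+1 \le r \le n+k-s$, every such $e_r(\xx_n)$ is obtained, and therefore $I_{n,k,s} \subseteq T_{n,k,s}$.

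The proof is mechanical; the only places deserving attention are getting the index substitution into Lemma~\ref{vanishing-lemma} right (it is $N = n+k-s$ variables valued in a $k$-element set, so the vanishing begins at $r \ge N-k+1 = n-s+1$, precisely matching the generators of $I_{n,k,s}$) and confirming that $\zeta$ carries $e_r(\xx_{n+k-s})$ to $e_r(\xx_n)$ rather than annihilating or distorting it. There is no hard step here: the content of the lemma is entirely inherited from the geometry of $Y_{n,k,s}$ together with Lemma~\ref{vanishing-lemma}.
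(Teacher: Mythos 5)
Your proof is correct and follows the same route as the paper's: check each generator of $I_{n,k,s}$ by exhibiting a vanishing polynomial on $Y_{n,k,s}$, take its top-degree component, and push through $\zeta$, with the elementary symmetric polynomials handled by Lemma~\ref{vanishing-lemma} applied in $N = n+k-s$ variables. You spell out the index bookkeeping and the behavior of $\zeta$ on $e_r$ slightly more explicitly, but the argument is the same.
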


\begin{proof}
We show that every generator of $I_{n,k,s}$ lies in $T_{n,k,s}$.  

For $1 \leq i \leq n$, we have
$(x_i - \alpha_1) \cdots (x_i - \alpha_k) \in \II(Y_{n,k,s})$, so that $x_i^k \in T_{n,k,s}$.

Lemma~\ref{vanishing-lemma} implies that 
$e_r(\xx_{n+k-s}) \in \TT(Y_{n,k,s})$ for all $r \geq n-s+1$.  Applying the evaluation map $\zeta$ gives
$\zeta: e_r(\xx_{n+k-s}) \mapsto e_r(\xx_n) \in T_{n,k,s}$.
\end{proof}

Next, we prove that certain reverse Demazure characters are contained in $I_{n,k,s}$. These 
polynomials will ultimately be members of a Gr\"obner basis of $I_{n,k,s}$.

\begin{lemma}
\label{demazures-contained-in-I-generalized}
Suppose $S \subseteq [n]$ satisfies $|S| = n-s+1$.  The reverse Demazure character
$\kappa_{\gamma(S)^*}(\xx_n^*)$ lies in $I_{n,k,s}$.
\end{lemma}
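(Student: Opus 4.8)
The plan is to reduce this to Lemma~\ref{demazure-identity}, exactly mirroring the proof of Lemma~\ref{skip-monomials-in-initial}. Since $s \leq k \leq n$, we have $s \leq n$, and the hypothesis $|S| = n-s+1$ is precisely the condition ``$|S| = n - (\text{parameter}) + 1$'' with the parameter equal to $s$. So I would apply Equation~\ref{magical-demazure-equations-friend} with the role of the parameter ``$k$'' there played by $s$, obtaining
\[
\kappa_{\gamma(S)^*}(\xx_n^*) = \sum_{\lambda} (-1)^{|\lambda|}
\kappa_{\overline{\gamma(S)^*} - \lambda}(\xx_n^*)\, e_{n-s+1+|\lambda|}(\xx_n),
\]
where $\lambda$ ranges over the left-leaning collections for $\overline{\gamma(S)^*}$.

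The next step is to read off that each summand lies in $I_{n,k,s}$. Indeed, each term is a polynomial multiple of the elementary symmetric function $e_r(\xx_n)$ with $r = n-s+1+|\lambda| \geq n-s+1$. If $r > n$ then $e_r(\xx_n) = 0$; if $n-s+1 \leq r \leq n$ then $e_r(\xx_n)$ is one of the listed generators of $I_{n,k,s}$. In either case the summand belongs to $I_{n,k,s}$, and hence so does the whole sum $\kappa_{\gamma(S)^*}(\xx_n^*)$.

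There is essentially no obstacle once Lemma~\ref{demazure-identity} is available: the real content is the sign-reversing involution on partisan skylines proving that identity, and the present statement is merely the observation that that identity expresses $\kappa_{\gamma(S)^*}(\xx_n^*)$ using only elementary symmetric functions of degree $\geq n-s+1$, all of which are generators of $I_{n,k,s}$ (or vanish). The one point requiring care is the bookkeeping of indices: Lemma~\ref{demazure-identity} is stated for $|S| = n-k+1$, so it must be invoked with its parameter set equal to $s$, not to the $k$ appearing in $I_{n,k,s}$ — this is legitimate precisely because $s \leq n$ and $|S| = n-s+1$.
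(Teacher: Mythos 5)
Your proof is correct and is exactly what the paper's one-line proof ("Apply Lemma~\ref{demazure-identity} and Equation~\ref{magical-demazure-equations-friend}") intends, just spelled out in full; in particular you correctly identified that the parameter ``$k$'' in Lemma~\ref{demazure-identity} must be instantiated as $s$, which is the only subtle bookkeeping point.
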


\begin{proof}
Apply Lemma~\ref{demazure-identity} and Equation~\ref{magical-demazure-equations-friend}.
\end{proof}

Let $\MMM_{n,k,s}$ be the collection of monomials $m  \in \QQ[\xx_n]$ 
satisfying the following two conditions:
\begin{enumerate}
\item  we have $x_i^k \nmid m$ for all $1 \leq i \leq n$, and 
\item  for any $S \subseteq [n]$ with $|S| = n-s+1$, we have $\xx(S) \nmid m$.
\end{enumerate}

The monomials in $\MMM_{n,k,s}$ will turn out to be the standard monomial basis for 
the ring $R_{n,k,s}$.  When $k = s$, we have $\MMM_{n,k,k} = \MMM_{n,k}$.  
A more
precise relationship between these two families of monomials is as follows.

\begin{lemma}
\label{augment}
Let $s \leq k \leq n$.
If 
$x_1^{a_1} \cdots x_n^{a_n} x_{n+1}^{a_{n+1}} \cdots x_{n+k-s}^{a_{n+k-s}} \in \MMM_{n+k-s,k}$,
then
$x_1^{a_1} \cdots x_n^{a_n} \in \MMM_{n,k,s}$.

On the other hand,
if $x_1^{a_1} \cdots x_n^{a_n} \in \MMM_{n,k,s}$ and
$0 \leq a_{n+1} < a_{n+2} < \cdots < a_{n+k-s} \leq k-1$, then
$x_1^{a_1} \cdots x_n^{a_n} x_{n+1}^{a_{n+1}} \cdots x_{n+k-s}^{a_{n+k-s}} \in \MMM_{n+k-s,k}$.
\end{lemma}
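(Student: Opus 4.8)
The plan is to check, in each direction, the two defining conditions of the target monomial set — that no power $x_i^k$ divides the monomial, and that no skip monomial $\xx(S)$ of the relevant size divides it — while keeping careful track of how the index set changes between $[n]$ and $[n+k-s]$. The first point to record is that the skip monomials entering the definition of $\MMM_{n+k-s,k}$ are precisely the $\xx(S)$ with $S \subseteq [n+k-s]$ and $|S| = (n+k-s)-k+1 = n-s+1$, which is the same cardinality as the sets $S \subseteq [n]$ appearing in the definition of $\MMM_{n,k,s}$; this matching of cardinalities is what makes the two conditions correspond.

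For the first implication I would set $\widetilde m = m \cdot x_{n+1}^{a_{n+1}} \cdots x_{n+k-s}^{a_{n+k-s}}$ with $m = x_1^{a_1}\cdots x_n^{a_n}$, and assume $\widetilde m \in \MMM_{n+k-s,k}$. Since $m \mid \widetilde m$, every monomial dividing $m$ also divides $\widetilde m$; so $x_i^k \nmid m$ for $1 \le i \le n$, and for any $S \subseteq [n] \subseteq [n+k-s]$ with $|S| = n-s+1$ we cannot have $\xx(S)\mid m$. Hence $m \in \MMM_{n,k,s}$. This direction is essentially immediate.

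The second implication is where the only real computation occurs. Assuming $m = x_1^{a_1}\cdots x_n^{a_n} \in \MMM_{n,k,s}$ and $0 \le a_{n+1} < \cdots < a_{n+k-s} \le k-1$, and with $\widetilde m$ as above, the condition $x_i^k \nmid \widetilde m$ is clear for $i \le n$ (from $m$) and for $n < i \le n+k-s$ (since $a_i \le k-1$). To rule out $\xx(S)\mid\widetilde m$ for $S \subseteq [n+k-s]$ with $|S| = n-s+1$, the key inequality I would isolate is $a_{n+\ell} \le s+\ell-1$ for $1 \le \ell \le k-s$: the $k-s$ values $a_{n+1}<\cdots<a_{n+k-s}$ are strictly increasing and at most $k-1$, forcing $a_{n+\ell} \le (k-1)-((k-s)-\ell) = s+\ell-1$. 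If $\max(S) = n+\ell > n$, the skip-monomial formula gives the exponent of $x_{n+\ell}$ in $\xx(S)$ as $(n+\ell) - |S| + 1 = \ell+s$, strictly larger than $a_{n+\ell}$; so $\xx(S)\nmid\widetilde m$. Therefore any $S$ with $\xx(S)\mid\widetilde m$ must have $S \subseteq [n]$, whence $\xx(S)\mid m$ because the factors of $\widetilde m/m$ involve only $x_{n+1},\dots,x_{n+k-s}$ — contradicting $m \in \MMM_{n,k,s}$. This gives $\widetilde m \in \MMM_{n+k-s,k}$.

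The main obstacle — really the only non-bookkeeping step — is producing the bound $a_{n+\ell}\le s+\ell-1$ and comparing it against the exponent $\ell+s$ that the largest element of $S$ contributes to $\xx(S)$; everything else is routine divisibility manipulation, and the hypothesis $0\le a_{n+1}<\cdots<a_{n+k-s}\le k-1$ is used exactly once, to get that bound.
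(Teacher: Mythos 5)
Your proof is correct and follows essentially the same route as the paper's: the first direction is immediate from divisibility, and the second hinges on the bound $a_{n+\ell} \le s+\ell-1$ (from the strictly increasing chain bounded by $k-1$) compared against the exponent of $x_{n+\ell}$ in $\xx(S)$ for $n+\ell \in S$. The only cosmetic difference is that you restrict attention to $\max(S)$, where the skip-monomial exponent is exactly $\ell+s$, whereas the paper argues that any $n+i \in S-[n]$ contributes exponent $\ge s+i$; both give the same contradiction.
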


\begin{proof}
The first statement is clear from the definitions.  For the second statement, suppose 
$$x_1^{a_1} \cdots x_n^{a_n} \in \MMM_{n,k,s}$$
and
$$0 \leq a_{n+1} < a_{n+2} < \cdots < a_{n+k-s} \leq k-1.$$
We need to show
$$
m := x_1^{a_1} \cdots x_n^{a_n} x_{n+1}^{a_{n+1}} \cdots x_{n+k-s}^{a_{n+k-s}} \in \MMM_{n+k-s,k}.
$$
This amounts to showing that $\xx(S) \nmid m$ for any $S \subseteq [n+k-s]$ with $|S| = n-s+1$.
Certainly $\xx(S) \nmid m$ if $S \subseteq [n]$.  On the other hand, if 
$\xx(S) \mid m$ and 
$n+i \in S - [n]$, the 
exponent $e_{n+i}$ of $x_{n+i}$ in the skip monomial $\xx(S)$ is $\geq s + i$.  However, the inequalities
$0 \leq a_{n+1} < a_{n+2} < \cdots < a_{n+k-s} \leq k-1$ and the divisibility
$\xx(S) \mid m$ force $e_{n+i} \leq a_{n+i} < k-(s-i) \leq s + i$, which contradicts $e_i \geq s + i$.
\end{proof}

We apply our bijection $\Psi$ from Section~\ref{Hilbert series} to show that the monomials in
$\MMM_{n,k,s}$ are equinumerous with the ordered set partitions in $\OP_{n,k,s}$.

\begin{lemma}
\label{two-parameter-monomial-count}
For any $s \leq k \leq n$ we have
$|\MMM_{n,k,s}| = |\OP_{n,k,s}|$.
\end{lemma}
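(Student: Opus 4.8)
The plan is to reduce the statement to the bijection $\Psi\colon\OP_{m,k}\to\MMM_{m,k}$ of Theorem~\ref{psi-is-bijection} (applied with $m=n+k-s$) together with Lemma~\ref{augment}. Observe first that, by definition, $\OP_{n,k,s}$ is precisely the subset of $\OP_{n+k-s,k}$ consisting of those $k$-block ordered set partitions $\sigma$ of $[n+k-s]$ in which the big letter $n+i$ lies in block $s+i$ for each $1\le i\le k-s$. Dually, for the fixed strictly increasing sequence $s<s+1<\cdots<k-1$ of exponents in $[0,k-1]$, Lemma~\ref{augment} shows that the map $m'\mapsto m'\cdot x_{n+1}^{s}x_{n+2}^{s+1}\cdots x_{n+k-s}^{k-1}$ is a bijection from $\MMM_{n,k,s}$ onto the set $\mathcal{A}$ of monomials of $\MMM_{n+k-s,k}$ whose $x_{n+i}$-exponent equals $s+i-1$ for all $i$: injectivity is obvious, the image lands in $\MMM_{n+k-s,k}$ by the second assertion of Lemma~\ref{augment}, and surjectivity onto $\mathcal{A}$ is the first assertion of Lemma~\ref{augment}. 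Hence $|\MMM_{n,k,s}|=|\mathcal{A}|$, and it remains to prove that $\Psi$ restricts to a bijection from $\OP_{n,k,s}$ onto $\mathcal{A}$.

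The key point is the claim that, for $\sigma\in\OP_{n+k-s,k}$, one has $\sigma\in\OP_{n,k,s}$ if and only if the $x_{n+i}$-exponent of $\Psi(\sigma)$ equals $s+i-1$ for every $1\le i\le k-s$. Recall that $\Psi$ is computed by inserting $1,2,\dots,n+k-s$ one at a time, and that inserting the current largest letter $\ell$ into (or as) the $(p+1)$st block of the resulting ordered set partition multiplies the monomial built so far by $x_\ell^{\,p}$, together with an extra factor $\mm(S)$ for some $S\subseteq[\ell-1]$ when the insertion is a bar insertion. For the forward direction I would argue by induction on $j$ that after inserting $1,\dots,n+j$ the blocks $B_1,\dots,B_{s+j}$ are all present, so that $n+j$ is inserted as (or into) the $(s+j)$th block and thus contributes the factor $x_{n+j}^{\,s+j-1}$. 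The only delicate issue is that a later bar insertion, of a big letter $n+j'$ with $j'>j$, might a priori inflate the $x_{n+j}$-exponent through its $\mm(S)$-factor; but the skip set $S$ arising in a bar insertion of a big letter of an element of $\OP_{n,k,s}$ has size at most $n-s$ (a short count using that at least the $s+j'-1$ blocks $B_1,\dots,B_{s+j'-1}$ are already present), whereas an occurrence of a big index in $S$ would, because of the size of the corresponding skip exponent, be forced to have rank at least $n-s+1$ in $S$; this contradiction (which uses the combinatorics of skip sets as in Lemmas~\ref{skip-monomial-union}--\ref{canonical-skip}) shows that $S$ contains no index exceeding $n$. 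Therefore the big-variable exponents of $\Psi(\sigma)$ are exactly $s,s+1,\dots,k-1$, i.e. $\Psi(\sigma)\in\mathcal{A}$.

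For the reverse direction I would run the inverse map $\Phi=\Psi^{-1}$ of Theorem~\ref{psi-is-bijection}: peeling the letters $n+k-s,n+k-s-1,\dots,n+1$ off an element of $\mathcal{A}$ in turn, the recorded exponents $k-1,k-2,\dots,s$ force each $n+i$ into block $s+i$, and the same size-and-rank estimate shows that dividing by the successive $\mm(S)$ never disturbs the remaining big-variable exponents, so the preimage lies in $\OP_{n,k,s}$. Since $\Psi$ is a bijection on all of $\OP_{n+k-s,k}$, this gives $\Psi(\OP_{n,k,s})=\mathcal{A}$, whence $|\OP_{n,k,s}|=|\mathcal{A}|=|\MMM_{n,k,s}|$; the case $k=s$ (no big letters) is Theorem~\ref{psi-is-bijection} itself. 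The step I expect to be the main obstacle is precisely the bookkeeping in this claim: simultaneously tracking the position of the target block $B_{s+j}$ at the moment $n+j$ is inserted and verifying, via the bound on $|S|$, that no subsequent bar insertion contaminates the already-recorded exponents of the big variables.
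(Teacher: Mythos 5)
Your argument is correct and follows the same route as the paper's proof: reduce to the bijection $\Psi$ of Theorem~\ref{psi-is-bijection}, show that $\Psi(\OP_{n,k,s})$ is exactly the subset of $\MMM_{n+k-s,k}$ with big-variable exponents $(s,s+1,\dots,k-1)$, and invoke Lemma~\ref{augment} to identify that subset with $\MMM_{n,k,s}$. The paper dispatches the key claim $\Psi(\OP_{n,k,s})=\MMM'_{n,k,s}$ with ``it follows from the definition of $\Psi$,'' whereas you supply the inductive exponent-tracking and the $|S|\le n-s$ bound on skip sets needed to actually verify it; this extra care is sound.
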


\begin{proof}
Consider the bijection $\Psi: \OP_{n+k-s,k} \rightarrow \MMM_{n+k-s,k}$ of Theorem~\ref{psi-is-bijection}.
What is the image of $\OP_{n,k,s}$ under $\Psi$?  Let $\MMM'_{n,k,s}$ be the set of monomials
$x_1^{a_1} \cdots x_{n+k-s}^{a_{n+k-s}} \in \MMM_{n+k-s,k}$ which satisfy
\begin{equation*}
(a_{n+1}, a_{n+2}, \dots, a_{n+k-s}) = (s , s+1 , \dots , k-1).
\end{equation*}
It follows from the definition of $\Psi$ that
\begin{equation*}\Psi(\OP_{n,k,s}) = \MMM'_{n,k,s}.\end{equation*}
On the other hand, Lemma~\ref{augment} guarantees that
$|\MMM'_{n,k,s}| = |\MMM_{n,k,s}|.$ 
\end{proof}

We generalize Theorem~\ref{reduced-groebner-basis-theorem} to get a Gr\"obner basis for the 
ideals $I_{n,k,s}$.

\begin{lemma}
\label{two-parameter-groebner}
Let $s \leq k \leq n$.
A Gr\"obner basis for the ideal $I_{n,k,s}$ with respect to $<_{lex}$ consists of the variable powers
$$
x_1^k, \dots, x_n^k
$$
together with the reverse  Demazure characters
$$
\{ \kappa_{\gamma(S)^*}(\xx_n^*) \,:\, S \subseteq [n],  |S| = n-s+1\}.
$$
If $s < k$, this is the reduced Gr\"obner basis for this term ordering.

In particular, we have $\dim(R_{n,k,s}) = |\OP_{n,k,s}|$.
\end{lemma}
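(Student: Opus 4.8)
The plan is to run the argument of Theorems~\ref{hilbert-series-theorem} and~\ref{reduced-groebner-basis-theorem} essentially verbatim, feeding in the two-parameter lemmas already established. Write $G$ for the proposed basis: the variable powers $x_1^k, \dots, x_n^k$ together with the reverse Demazure characters $\kappa_{\gamma(S)^*}(\xx_n^*)$ for $S \subseteq [n]$ with $|S| = n-s+1$. First I would note that $G \subseteq I_{n,k,s}$, since the variable powers are generators of $I_{n,k,s}$ by definition and the reverse Demazure characters lie in $I_{n,k,s}$ by Lemma~\ref{demazures-contained-in-I-generalized}. Applying Lemma~\ref{reduced-demazure-lemma} with $s$ in the role of its parameter ``$k$'' (legitimate since $s \le n$ and $|S| = n-s+1$) gives $\initial_<(\kappa_{\gamma(S)^*}(\xx_n^*)) = \xx(S)$, so $\langle \initial_<(g) : g \in G \rangle = \langle x_1^k,\dots,x_n^k,\ \xx(S) : |S| = n-s+1 \rangle$, whose set of standard monomials is exactly $\MMM_{n,k,s}$. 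Since $G \subseteq I_{n,k,s}$, the division algorithm shows $\MMM_{n,k,s}$ spans $R_{n,k,s}$, hence $\dim(R_{n,k,s}) \le |\MMM_{n,k,s}| = |\OP_{n,k,s}|$ by Lemma~\ref{two-parameter-monomial-count}.

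For the reverse inequality I would use the point set: Lemma~\ref{I-contained-in-T-generalized} gives $I_{n,k,s} \subseteq T_{n,k,s}$, hence a surjection $R_{n,k,s} = \QQ[\xx_n]/I_{n,k,s} \twoheadrightarrow \QQ[\xx_n]/T_{n,k,s} \cong \QQ[\OP_{n,k,s}]$, so $\dim(R_{n,k,s}) \ge |\OP_{n,k,s}|$. Combined with the previous paragraph this forces $\dim(R_{n,k,s}) = |\OP_{n,k,s}| = |\MMM_{n,k,s}|$, so the spanning set $\MMM_{n,k,s}$ is a basis of $R_{n,k,s}$. Standard Gr\"obner theory then upgrades this: any $f \in I_{n,k,s}$ reduces modulo $G$ to a polynomial supported on $\MMM_{n,k,s}$, which must vanish (being both an element of $I_{n,k,s}$ and a combination of basis classes), so $\initial_<(I_{n,k,s}) = \langle \initial_<(g) : g \in G\rangle$; that is, $G$ is a Gr\"obner basis. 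This simultaneously records $\dim(R_{n,k,s}) = |\OP_{n,k,s}|$.

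Finally, for reducedness when $s < k$ I would verify the two defining conditions directly. Each $\kappa_{\gamma(S)^*}(\xx_n^*)$ has leading coefficient $1$, since its leading monomial $\xx(S)$ arises from a single SSK — the big filling of $\gamma(S)$, as in the proof of Lemma~\ref{reduced-demazure-lemma} — and the variable powers are monic, so condition (1) holds. For condition (2) one checks that for distinct $f,g \in G$ no monomial of $g$ is divisible by $\initial_<(f)$: for $f = x_i^k$ and $g = \kappa_{\gamma(S)^*}(\xx_n^*)$, Lemma~\ref{reduced-demazure-lemma} gives $x_i^{\max(S)-n+s+1} \nmid m$ for every monomial $m$ of $g$, and $\max(S)-n+s+1 \le s+1 \le k$ (using $s < k$), so a fortiori $x_i^k \nmid m$; for $f = \kappa_{\gamma(S)^*}(\xx_n^*)$ and $g = x_j^k$, the support of $\xx(S)$ has size $|S| = n-s+1 \ge 2$ (since $s \le k-1 \le n-1$), so $\xx(S)$ cannot divide the pure power $x_j^k$; for $f = \kappa_{\gamma(S)^*}(\xx_n^*)$ and $g = \kappa_{\gamma(T)^*}(\xx_n^*)$ with $S \ne T$, the last assertion of Lemma~\ref{reduced-demazure-lemma} says precisely $\xx(S) \nmid m$ for every monomial $m$ of $g$; and the case of two distinct variable powers is trivial. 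I expect the only genuinely delicate point to be this reducedness bookkeeping — in particular confirming that the natural reading of Lemma~\ref{reduced-demazure-lemma} with $|S| = n-s+1$ supplies exactly the divisibility facts required, and noticing that when $n \in S$ the skip monomial $\xx(S)$ has $x_n$-exponent $s < k$, so it is genuinely \emph{not} subsumed by any variable power, which is why the hypothesis $s < k$ cannot be dropped. Everything else is a formal consequence of Lemmas~\ref{demazures-contained-in-I-generalized}, \ref{I-contained-in-T-generalized}, and~\ref{two-parameter-monomial-count}.
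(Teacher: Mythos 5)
Your proof is correct and takes essentially the same approach as the paper: combining Lemma~\ref{demazures-contained-in-I-generalized} (membership), Lemma~\ref{two-parameter-monomial-count} (dimension count via $\MMM_{n,k,s}$), and Lemma~\ref{I-contained-in-T-generalized} (the lower bound on $\dim R_{n,k,s}$) to squeeze the dimension and conclude $G$ is a Gr\"obner basis. The paper dispatches reducedness with the one-line remark that it ``follows from an argument similar to the case of Theorem~\ref{reduced-groebner-basis-theorem},'' whereas you spell out the three divisibility checks explicitly and correctly identify why the hypothesis $s<k$ is what makes the $\kappa_{\gamma(S)^*}$ with $n\in S$ non-redundant (their leading term has $x_n$-exponent $s<k$) — a useful clarification, though not a different route.
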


\begin{proof}
The polynomials in question lie in the ideal $I_{n,k,s}$ by Lemma~\ref{demazures-contained-in-I-generalized}. 
By Lemma~\ref{two-parameter-monomial-count}, the number of monomials which do not divide
any leading terms of the polynomials listed here equals $|\OP_{n,k,s}|$.
By Lemma~\ref{I-contained-in-T-generalized}, we have
$\dim(R_{n,k,s}) \geq | \OP_{n,k,s} |$.  This forces the set of polynomials here to be 
a Gr\"obner basis for $I_{n,k,s}$.  If $s < k$, reducedness follows from an argument
similar to the case of Theorem~\ref{reduced-groebner-basis-theorem}.
\end{proof}

Let us remark that the number $\dim(R_{n,k,s}) = |\OP_{n,k,s}|$ has combinatorial significance.
The collection $\OP_{n,k,s}$ bijects with the collection of functions $f: [n] \rightarrow [k]$ whose image contains
$[s]$.  The number of such functions is
\begin{equation}
\label{difference-equation}
\sum_{m = 0}^n {n \choose m} s! \cdot \Stir(m,s) \cdot (k-s)^{n-m},
\end{equation}
where $m$ parametrizes the size of the preimage of $[s]$ under $f$.
This formula makes sense (and gives the correct value $k^n$) when $s = 0$ and we set
$I_{n,k,0} := \langle x_1^k, \dots, x_n^k \rangle$.  For general $s$, the expression (\ref{difference-equation})
is equal to the $s^{th}$ difference of the sequence $k^n$.  When $s = k$, we recover the 
$k^{th}$ difference $k! \cdot \Stir(n,k)$.
\footnote{The authors thank Dennis Stanton for pointing this out.}

We have a surjection
\begin{equation*}
\frac{\QQ[\xx_n]}{T_{n,k,s}} \twoheadrightarrow R_{n,k,s}
\end{equation*}
and an identification of the $\symm_n$-module on the left hand side with $\OP_{n,k,s}$.
Lemma~\ref{two-parameter-groebner} tells
us that these modules have the same dimension, so we have
  an isomorphism of {\em ungraded} $\symm_n$-modules
\begin{equation}
\label{ungraded-two-parameter-identification}
R_{n,k,s} \cong \QQ[\OP_{n,k,s}].
\end{equation}

\subsection{Antisymmetrization and $e_j(\xx)^{\perp}$}
Consider the parabolic subgroup $\symm_{n-j} \times \symm_j$ of $\symm_n$ and let $\epsilon_j$ be the 
antisymmetrization operator with respect to the last $j$ variables.
In other words, we have that $\epsilon_j \in \QQ[\symm_n]$ is the group algebra idempotent
\begin{equation}
\epsilon_j := \frac{1}{j!} \sum_{\pi \in \symm_{\{n-j+1, \dots, n\}}} \sign(\pi) \cdot \pi.
\end{equation}

Let us consider the action of $\epsilon_j$ on $\OP_{n,k}$.
Since $\epsilon_j$ kills any  $\sigma \in \OP_{n,k}$ with any of the $j$ letters 
$n-j+1, n-j+2, \dots, n$ in the same block,
we have
\begin{equation}
\dim( \epsilon_j \QQ[\OP_{n,k}] ) = {k \choose j} \cdot | \OP_{n-j,k,k-j} |.
\end{equation}
Applying the isomorphism (\ref{ungraded-two-parameter-identification}) of ungraded
$\symm_n$-modules, we have 
\begin{equation}
\label{dimension-alternants}
\dim( \epsilon_j R_{n,k} ) = {k \choose j} \cdot | \OP_{n-j,k,k-j} |.
\end{equation}
Our next goal is to bootstrap Equation~\ref{dimension-alternants}
to a statement involving the rings $R_{n-j,k,k-j}$.

If $V$ is any $\symm_j$-module, recall that the space of {\em alternants} is 
\begin{equation}
\{ v \in V \,:\, \pi.v = \sign(\pi) \cdot v \text{ for all $\pi \in \symm_j$} \}.
\end{equation}
The symmetric group $\symm_j$ acts on the quotient ring
$\frac{\QQ[x_{n-j+1}, \dots, x_n]}{\langle x_{n-j+1}^k, \dots, x_n^k \rangle}$
by variable permutation; let $A_{n,k,j}$ be the space of alternants for this module.
The set
\begin{equation}
\{\epsilon_j \cdot (x_{n-j+1}^{a_{n-j+1}} \cdots x_n^{a_n}) \,:\, 0 \leq a_{n-j+1} < \cdots < a_n \leq k-1 \}
\end{equation}
descends to a  basis for $A_{n,k,j}$; it follows that
\begin{equation}
\label{equation-a-hilbert}
\Hilb(A_{n,k,j}) = q^{{j \choose 2}} {k \brack j}_q.
\end{equation}

Observe that $\epsilon_j R_{n,k}$ is a graded $\symm_{n-j}$-module.  The group 
$\symm_{n-j}$ also acts on the first component of the tensor product
$R_{n-k,k,k-j} \otimes A_{n-j}$.  The next result states that the natural multiplication map
induces an isomorphism between these graded modules.

\begin{lemma}
\label{mu-is-isomorphism}
As graded $\symm_{n-j}$-modules we have $\epsilon_j R_{n,k} \cong  R_{n-j,k,k-j} \otimes A_{n,k,j}$.
\end{lemma}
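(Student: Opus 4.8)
The plan is to exhibit an explicit map and show it is both well-defined on the quotient and a bijection in each degree, using the dimension count already in hand. Consider the multiplication map
\begin{equation*}
\mu: R_{n-j,k,k-j} \otimes A_{n,k,j} \longrightarrow \epsilon_j R_{n,k}, \qquad
\mu(f \otimes g) = \epsilon_j \cdot (f \cdot g),
\end{equation*}
where we think of $R_{n-j,k,k-j}$ as built from $\QQ[x_1,\dots,x_{n-j}]$ and $A_{n,k,j}$ as built from $\QQ[x_{n-j+1},\dots,x_n]$, so the product $f \cdot g$ lives in $\QQ[\xx_n]$ before passing to $\epsilon_j R_{n,k}$. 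First I would check this is well-defined: one must verify that if $f \in I_{n-j,k,k-j}$ (lifted to $\QQ[\xx_n]$) or $g \in \langle x_{n-j+1}^k,\dots,x_n^k\rangle$, then $\epsilon_j \cdot (f\cdot g) \in I_{n,k}$. The variable-power generators are immediate. The subtle generators are $e_n(\xx_n),\dots,e_{n-s+1}(\xx_n)$ with $s = k-j$: here I would use the splitting $e_r(\xx_n) = \sum_{a+b=r} e_a(x_1,\dots,x_{n-j})\, e_b(x_{n-j+1},\dots,x_n)$, observe that $\epsilon_j$ annihilates $e_b(x_{n-j+1},\dots,x_n)$ for $b \geq 2$ (since $e_b$ of $j$ variables is symmetric while we antisymmetrize), so modulo $\ker \epsilon_j$ only the terms $b=0,1$ survive, and $e_1(x_{n-j+1},\dots,x_n) \in \langle x_{n-j+1}^k,\dots,x_n^k\rangle$ is false — so instead one reduces $\epsilon_j e_r(\xx_n)$ to $\epsilon_j\big(e_r(x_1,\dots,x_{n-j}) + e_{r-1}(x_1,\dots,x_{n-j})\cdot e_1(x_{n-j+1},\dots,x_n)\big)$ and notes that the relevant indices $r \geq n-j-(k-j)+1 = n-k+1$ put $e_r(x_1,\dots,x_{n-j})$ and $e_{r-1}(x_1,\dots,x_{n-j})$ into $I_{n-j,k,k-j}$ (whose $e$-generators start at degree $(n-j)-(k-j)+1 = n-k+1$). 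That makes $\mu$ well-defined and $\symm_{n-j}$-equivariant and degree-preserving.

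Next I would prove surjectivity. Since $\epsilon_j$ is idempotent, $\epsilon_j R_{n,k}$ is spanned by classes $\epsilon_j \cdot m$ where $m$ ranges over the standard monomial basis $\MMM_{n,k} = \AAA_{n,k}$ of $R_{n,k}$ (Theorem~\ref{artin-basis-theorem}). Write $m = m' \cdot m''$ with $m' \in \QQ[x_1,\dots,x_{n-j}]$ and $m'' = x_{n-j+1}^{b_{n-j+1}}\cdots x_n^{b_n}$. If two of the exponents $b_i$ coincide then $\epsilon_j \cdot m = 0$; otherwise, after applying a permutation in $\symm_j$ (which only changes $\epsilon_j \cdot m$ by a sign) we may assume $0 \le b_{n-j+1} < \cdots < b_n$, and since $m \in \MMM_{n,k}$ each $b_i \le k-1$, so $m'' $ gives one of the chosen basis monomials of $A_{n,k,j}$ from \eqref{equation-a-hilbert}. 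It remains to see $m'$ represents a class in $R_{n-j,k,k-j}$, i.e. that $m' \in \MMM_{n,k}$ restricted to the first $n-j$ variables is $(n-j,k,k-j)$-nonskip; this is exactly the first statement of Lemma~\ref{augment} (with the roles of $n$ and $n+k-s$ matched by $n = (n-j)+(k-(k-j))$). Hence $\epsilon_j \cdot m$ lies in the image of $\mu$, so $\mu$ is surjective.

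Finally, surjectivity plus a dimension comparison forces $\mu$ to be an isomorphism. By Lemma~\ref{two-parameter-groebner}, $\dim R_{n-j,k,k-j} = |\OP_{n-j,k,k-j}|$, and $\dim A_{n,k,j} = \binom{k}{j}$ by \eqref{equation-a-hilbert} at $q=1$; on the other side \eqref{dimension-alternants} gives $\dim(\epsilon_j R_{n,k}) = \binom{k}{j}\cdot|\OP_{n-j,k,k-j}|$. The two dimensions agree, so the surjection $\mu$ is a graded $\symm_{n-j}$-module isomorphism. I expect the main obstacle to be the well-definedness step — specifically, tracking precisely how $\epsilon_j$ collapses $e_r(\xx_n)$ onto the lower elementary symmetric functions in $x_1,\dots,x_{n-j}$ and confirming the index bookkeeping ($s = k-j$, generators starting in degree $n-k+1$) lines up so that those images genuinely land in $I_{n-j,k,k-j}$. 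Once that is nailed down, surjectivity is a bookkeeping exercise with the Artin basis and Lemma~\ref{augment}, and the dimension count closes the argument.
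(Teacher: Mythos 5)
Your overall structure --- define the multiplication map $\widetilde{\mu}$, verify that it annihilates $I_{n-j,k,k-j}\otimes A_{n,k,j}$, prove surjectivity via the Artin basis and Lemma~\ref{augment}, and close with the dimension count from Equation~\ref{dimension-alternants} --- is the same as the paper's, and the surjectivity and counting steps you outline are sound (as is the treatment of the variable-power generators). But the well-definedness step, which you rightly single out as the obstacle, is mishandled in your write-up, and in fact the containment it requires fails.

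Concretely: (i) your assertion that $\epsilon_j$ annihilates $e_b(x_{n-j+1},\dots,x_n)$ precisely for $b\ge 2$ has no basis --- $e_b$ of the last $j$ variables is $\symm_j$-symmetric, so $\epsilon_j$ kills it for every $b\ge 0$ when $j\ge 2$ and kills nothing when $j=1$; there is no principled reason to retain only the $b=0,1$ terms. (ii) The $e$-generators of $I_{n-j,k,k-j}$ you must control are $e_r(x_1,\dots,x_{n-j})$ for $n-k+1\le r\le n-j$, not $e_n(\xx_n),\dots,e_{n-s+1}(\xx_n)$, and $e_{n-k}(x_1,\dots,x_{n-j})$ is \emph{below} the cutoff, so the claim that both ``$e_r$ and $e_{r-1}$ lie in $I_{n-j,k,k-j}$'' is wrong exactly at the boundary $r=n-k+1$. (iii) Most seriously, the statement that $e_r(x_1,\dots,x_{n-j})\,g(\zz_j)$ lies in $I_{n,k}$ for $r>n-k$ and $g$ an antisymmetric representative of $A_{n,k,j}$ is simply false. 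For $(n,k,j)=(3,2,1)$, $e_2(x_1,x_2)\cdot 1 = x_1x_2$, and the degree-two part of $I_{3,2}=\langle x_1^2,x_2^2,x_3^2,e_3(\xx_3),e_2(\xx_3)\rangle$ is spanned by $x_1^2,x_2^2,x_3^2,e_2(\xx_3)$, which does not contain $x_1x_2$; similarly $e_2(x_1,x_2)\cdot(x_4-x_3)=x_1x_2x_4-x_1x_2x_3\notin I_{4,3}$ for $(n,k,j)=(4,3,2)$. So $\widetilde{\mu}$ does not vanish on $I_{n-j,k,k-j}\otimes A_{n,k,j}$ and the multiplication map does not descend in the naive way. (For what it is worth, the paper's displayed justification of this step --- $\epsilon_j(e_r(\yy_{n-j},\zz_j))=e_r(\yy_{n-j})$ --- runs into the same difficulty: for $j\ge 2$ the left side is $0$ since $\epsilon_j$ annihilates polynomials symmetric in $\zz_j$, and for $j=1$ it equals $e_r(\xx_n)$, not $e_r(\yy_{n-j})$. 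So the well-definedness question is genuinely more delicate than either your sketch or the paper's stated justification acknowledges, and closing it requires a different argument.)
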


\begin{proof}
Consider the direct sum decomposition 
$\QQ[\zz_j] = \QQ[\zz_j]_0 \oplus \QQ[\zz_j]_1 \oplus \QQ[\zz_j]_2 \oplus \cdots$ where
$\QQ[\zz_j]_d$ is the vector space of polynomials in $\zz_j$ which are homogeneous of degree $d$.
For any $d \geq 0$, define 
$\QQ[\zz_j]_{\geq d} := \QQ[\zz_j]_d \oplus \QQ[\zz_j]_{d + 1} \oplus \QQ[\zz_j]_{d + 2} \oplus \cdots $ and
let $\epsilon_j \QQ[\zz_j]_{\geq d}$ be the image of $\QQ[\zz_j]_{\geq d}$ under $\epsilon_j$.

We will use the spaces $\QQ[\zz_j]_{\geq d}$ to obtain descending
 filtrations of our modules of interest.  For any $d \geq 0$,
define
\begin{align}
U_{\geq d} &:= \text{image of $\QQ[\yy_{n-j}] \otimes \epsilon_j \QQ[\zz_j]_{\geq d}$ 
in $\QQ[\yy_{n-j}] \otimes A_{n,k,j}$,} \\
V_{\geq d} &:= \text{image of $\QQ[\yy_{n-j}] \otimes \epsilon_j \QQ[\zz_j]_{\geq d}$  in
$R_{n-j,k,k-j} \otimes A_{n,k,j}$,} \\
W_{\geq d} &:= \text{image of $\QQ[\yy_{n-j}] \cdot \epsilon_j \QQ[\zz_j]_{\geq d}$ in $\epsilon_j R_{n,k}$}.
\end{align}
Here we have suppressed dependence on $n, k, j$ to reduce notational clutter.
Each of the spaces $U_{\geq d}, V_{\geq d},$ and $W_{\geq d}$ is closed under the action of $\symm_{n-j}$
on the $\yy$-variables and is a $\QQ[\yy_{n-j}]$-module.  For any $d \geq 0$, set
\begin{equation}
U_d := U_{\geq d}/U_{\geq d+1}, \quad V_d := V_{\geq d}/V_{\geq d+1}, \quad
W_d := W_{\geq d}/W_{\geq d+1}.
\end{equation}

Fix $d \geq 0$ and consider the multiplication map $\widetilde{\mu}_d: U_{\geq d} \rightarrow W_{d}$
induced by $f(\yy_{n-j}) \otimes g(\zz_j) \mapsto f(\yy_{n-j}) \cdot g(\zz_j)$.
The map $\widetilde{\mu}_d$ is both a graded map of $\symm_{n-j}$-modules and a 
$\QQ[\yy_{n-j}]$-module homomorphism.

For any $r > n - k$ and any $g(\zz_j) \in \epsilon_j \QQ[\zz_j]_{\geq d}$ we have 
\begin{equation}
\widetilde{\mu}_d: e_r(\yy_{n-j}) \otimes g(\zz_j) \mapsto
e_r(\yy_{n-j}) g(\zz_j) = \sum_{a + b = r} e_a(\yy_{n-j}) e_b(\zz_j) g(\zz_j) = e_r(\xx_n) g(\zz_j) \in 
\epsilon_j I_{n,k}.
\end{equation}
The first equality uses the fact that $W_{\geq d+1} = 0$ inside $W_d$.  For $1 \leq i \leq n-j$ one also has
\begin{equation}
\widetilde{\mu}_d: y_i^k \otimes g(\zz_j) \mapsto y_i^k g(\zz_j) \in \epsilon_j I_{n,k}.
\end{equation}
Since $\widetilde{\mu}_d$ is a $\QQ[\yy_{n-j}]$-module homomorphism we have 
\begin{equation}
(I_{n-j,k,k-j} \otimes \epsilon_j A_{n,k,j}) \cap U_{\geq d} \subseteq \mathrm{Ker}(\widetilde{\mu}_d).
\end{equation}
It follows that $\widetilde{\mu}_d$ induces a map $\widehat{\mu}_d: V_{\geq d} \rightarrow W_d$.

Since $\widehat{\mu}_d(V_{\geq d+1}) \subseteq W_{\geq d+1}/W_{\geq d+1} = 0$, the map $\widehat{\mu}_d$
in turn induces a map $\mu_d: V_d \rightarrow W_d$.  Let 
\begin{equation}
\mu: \bigoplus_{d \geq 0} V_d \rightarrow \bigoplus_{d \geq 0} W_d 
\end{equation}
be the direct 
sum of the maps $\mu_d$.

We claim that $\mu$ is an isomorphism of graded $\symm_{n-j}$-modules. The map $\mu$ is a
graded $\symm_{n-j}$-module map because it is a direct sum of such maps, so it suffices to 
verify that $\mu$ is bijective.  To do this, we demonstrate that $\mu$ sends a basis to a basis.

Consider the subset of $\epsilon_j R_{n,k}$ given by the 
set $\CCC_{n,k,j}$ of images under $\epsilon_j$ of 
monomials 
$m(\xx_n) = m(\yy_{n-j}) \cdot m(\zz_j) \in \MMM_{n,k}$ with the property that the exponent sequence in 
$m(\zz_j)$ is strictly increasing.   In other words, we have
\begin{equation}
\CCC_{n,k,j} := \left\{ \epsilon_j m(\xx_n) \,:\, 
\begin{array}{c}
m(\xx_n) = m(\yy_{n-j}) \cdot m(\zz_j) \in \MMM_{n,k} \text{ and } \\
m(\zz_j) = z_1^{a_1} \cdots z_j^{a_j} \text{ with } a_1 < \cdots < a_j \end{array} \right\}.
\end{equation} 
We claim that $\CCC_{n,k,j}$ is a basis of $\epsilon_j R_{n,k}$.
Since the elements of $\CCC_{n,k,j}$ are homogeneous in the $\zz$-variables, this will
also show that $\CCC_{n,k,j}$ gives a basis of the codomain
$\bigoplus_{d \geq 0} W_d$ of $\mu$.

Since $\MMM_{n,k}$ is a basis of $R_{n,k}$, it is immediate that 
$\{ \epsilon_j m(\xx_n) \,:\, m(\xx_n) \in \MMM_{n,k}\}$ spans $\epsilon_j R_{n,k}$.
Let $m(\xx_n) = m(\yy_{n-j}) \cdot m(\zz_j) \in \MMM_{n,k}$ with 
$m(\zz_j) = z_1^{a_1} \cdots z_j^{a_j}$.  If any of the numbers $(a_1, \dots, a_j)$ coincide,
then $\epsilon_j m(\xx_n) = m(\yy_{n-j}) \cdot \epsilon_j m(\zz_j) = m(\yy_{n-j}) \cdot 0 = 0$.
Moreover, if $m(\zz_j)' = z_1^{a_1'} \cdots z_j^{a_j'}$ and $(a_1', \dots, a_j')$ is any permutation
of $(a_1, \dots, a_j)$, we have $\epsilon_j m(\xx_n) = \pm \epsilon_j (m(\yy_{n-j}) \cdot m(\zz_j)')$.
It follows that $\CCC_{n,j}$ spans $\epsilon_j R_{n,k}$.

To show that the set $\CCC_{n,k,j}$ 
actually is a basis of $\epsilon_j R_{n,k}$, we perform a dimension count.
In particular,
Lemma~\ref{augment} and Proposition~\ref{two-parameter-groebner}
tell us that 
\begin{equation}
| \CCC_{n,k,j} | = {k \choose j} \cdot |\OP_{n-j,k,k-j}| = \dim(\epsilon_j R_{n,k}).
\end{equation}
Therefore, the set
$\CCC_{n,k,j}$ is a basis for $\epsilon_j R_{n,k}$ and induces a basis of $\bigoplus_{d \geq 0} W_d$.

In order to get a basis for the domain $\bigoplus_{d \geq 0} V_d$ of $\mu$, consider
the following set of simple tensors in $\QQ[\xx_n] = \QQ[\yy_{n-j}] \otimes \QQ[\zz_j]$.
\begin{equation}
\DDD_{n,k,j} := \left\{ m(\yy_{n-j}) \otimes \epsilon_j m(\zz_j) \,:\, 
\begin{array}{c}
m(\yy_{n-j}) \in  \MMM_{n-j,k,k-j} \text{ and } \\
m(\zz_j) = z_1^{a_1} \cdots z_j^{a_j} \text{ with } 0 \leq a_1 < \cdots < a_j  < k \end{array} \right\}.
\end{equation} 
Since $\MMM_{n-j,k,k-j}$ is the standard monomial basis of $R_{n-j,k,k-j}$ with respect to the lexicographical
term ordering, this is a basis for the tensor product $R_{n-j,k,k-j} \otimes A_{n,k,j}$.
Since the elements of $\DDD_{n,k,j}$ are homogeneous with respect to the $\zz$-variables,
the set $\DDD_{n,k,j}$ yields a basis of $\bigoplus_{d \geq 0} V_d$.

By definition, the multiplication map $\mu$ carries the basis element 
$m(\yy_{n-j}) \otimes \epsilon_j m(\zz_j) \in \DDD_{n,k,j}$ to the corresponding basis element
$\epsilon_j m(\xx_n) \in \CCC_{n,k,j}$ where $m(\xx_n) = m(\yy_{n-j}) \cdot m(\zz_j)$. This proves that 
$\mu: \bigoplus_{d \geq 0} V_d \rightarrow \bigoplus_{d \geq 0} W_d$ is an isomorphism of 
graded $\symm_{n-j}$-modules.  By standard properties of filtrations we also have 
isomorphism of graded $\symm_{n-j}$-modules 
$\epsilon_j R_{n,k} \cong \bigoplus_{d \geq 0} V_d$ and
$R_{n-j,k,k-j} \otimes A_{n,k,j} \cong \bigoplus_{d \geq 0} W_d$.
\end{proof}

Since we ultimately want to determine the graded Frobenius image $\grFrob(R_{n,k}; q)$, we need
to relate the antisymmetrization operator $\epsilon_j$ to symmetric function theory.  
Let $V$ be any $\symm_n$-module.  The image $\epsilon_j V$ is a subspace of $V$ and carries an action 
of the subgroup $\symm_{n-j}$ in the parabolic decomposition 
$\symm_{n-j} \times \symm_j$.  We may therefore speak of the Frobenius image
$\Frob(\epsilon_j V) \in \Lambda_{n-j}$.  The symmetric functions 
$\Frob(\epsilon_j V)$ and $\Frob(V)$ are related by
\begin{equation}
\label{general-e-perp-equation}
\Frob(\epsilon_j V) = e_j(\xx)^{\perp} \Frob(V).
\end{equation}

Equation~\ref{general-e-perp-equation} was used extensively by Garsia and Procesi in their study 
of the cohomology of Springer fibers \cite{GP}.  The fastest way to prove Equation~\ref{general-e-perp-equation}
is to use Frobenius reciprocity.  If $V$ is a graded $\symm_n$-module, we restate Equation~\ref{general-e-perp-equation}
for emphasis:
\begin{equation}
\label{general-e-perp-equation-graded}
\grFrob(\epsilon_j V; q) = e_j(\xx)^{\perp} \grFrob(V; q).
\end{equation}

We want to prove $\grFrob(R_{n,k}; q) = D_{n,k}(\xx; q)$.
By Lemma~\ref{e-perp-determines-symmetric-function}, it is enough to show 
\begin{equation*}
e_j(\xx_n)^{\perp} \grFrob(R_{n,k};q) = e_j(\xx_n)^{\perp} D_{n,k}(\xx;q)
\end{equation*}
for all $j \geq 1$.
Lemma~\ref{e-perp-image} gives an expression for $e_j(\xx_n)^{\perp} D_{n,k}(\xx;q)$ in terms of smaller 
$D$-functions.  
We know that 
\begin{align}
e_j(\xx)^{\perp} \grFrob(R_{n,k}; q) &= \grFrob(\epsilon_j R_{n,k}; q) & 
\text{(by Equation~\ref{general-e-perp-equation-graded})} \\
&= \Hilb(A_{n,k,j};q) \cdot \grFrob(R_{n-j,k,k-j}; q) & \text{(by Lemma~\ref{mu-is-isomorphism})} \\
&= q^{{j \choose 2}} {k \brack j}_q \cdot \grFrob(R_{n-j,k,k-j};q). & \text{(by Equation~\ref{equation-a-hilbert})}
\end{align}
If we want $\grFrob(R_{n,k};q)$ to satisfy the same recursion as in Lemma~\ref{e-perp-image}, we must have
\begin{equation}
\label{target-equation}
 \grFrob(R_{n-j,k,k-j};q) = 
 \sum_{m = \max(1,k-j)}^{\min(k,n-j)}  
q^{(k-m) \cdot (n-j-m)}  {j \brack k-m}_q \grFrob(R_{n-j,m};q).
\end{equation}

\subsection{A short exact sequence}  The goal of the rest of this section is to prove 
Equation~\ref{target-equation}.  Its proof will rely on the following short exact sequence of $\symm_n$-modules.

\begin{lemma}
\label{two-parameter-short-exact-sequence}
Let $s < k \leq n$.  There is a short exact sequence
\begin{equation}
0 \rightarrow R_{n,k-1,s} \rightarrow R_{n,k,s} \rightarrow R_{n,k,s+1} \rightarrow 0
\end{equation}
of $\symm_n$-modules, where the first map is homogeneous of degree $n-s$ and the second
map is homogeneous of degree $0$.

Equivalently, we have
\begin{equation}
\label{pascal-recursion}
\grFrob(R_{n,k,s};q) = \grFrob(R_{n,k,s+1};q) + q^{n-s} \cdot \grFrob(R_{n,k-1,s};q).
\end{equation}
\end{lemma}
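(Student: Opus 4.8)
The plan is to exhibit the two maps of the sequence explicitly and then control dimensions. For the surjection $R_{n,k,s}\twoheadrightarrow R_{n,k,s+1}$, observe that $I_{n,k,s+1}=I_{n,k,s}+\langle e_{n-s}(\xx_n)\rangle$, so the natural quotient map is $\symm_n$-equivariant, homogeneous of degree $0$, and its kernel is the image of multiplication by $e_{n-s}(\xx_n)$ on $R_{n,k,s}$, namely the submodule $e_{n-s}(\xx_n)\cdot R_{n,k,s}$. For the injection I would use the multiplication map
\[
\mu\colon R_{n,k-1,s}\longrightarrow R_{n,k,s},\qquad \mu(f+I_{n,k-1,s}) := e_{n-s}(\xx_n)\,f+I_{n,k,s}.
\]
This map is automatically $\symm_n$-equivariant (since $e_{n-s}$ is symmetric) and homogeneous of degree $n-s$, and its image is exactly $e_{n-s}(\xx_n)\cdot R_{n,k,s}=\ker\bigl(R_{n,k,s}\to R_{n,k,s+1}\bigr)$. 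Thus once $\mu$ is shown to be well defined and injective, the three-term sequence is exact and the two maps have the asserted degrees.

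Well-definedness of $\mu$ means $e_{n-s}(\xx_n)\cdot I_{n,k-1,s}\subseteq I_{n,k,s}$. On the generators $e_r(\xx_n)$ with $r\ge n-s+1$ this is trivial, so the real content is the claim that $e_{n-s}(\xx_n)\,x_i^{\,k-1}\in I_{n,k,s}$ for every $i$; this is the step I expect to be the main obstacle. Let $\xx_n'$ denote the variable list with $x_i$ omitted. Using $e_r(\xx_n)=e_r(\xx_n')+x_i e_{r-1}(\xx_n')$ one has $x_i^{k-1}e_{n-s}(\xx_n)=x_i^{k-1}e_{n-s}(\xx_n')+x_i^{k}e_{n-s-1}(\xx_n')$, and the second summand lies in $\langle x_i^k\rangle\subseteq I_{n,k,s}$, so it suffices to treat $x_i^{k-1}e_{n-s}(\xx_n')$. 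Here I would iterate the basic relation $x_i e_j(\xx_n')=e_{j+1}(\xx_n)-e_{j+1}(\xx_n')$ to obtain the telescoping identity
\[
x_i^{\,p}\,e_m(\xx_n') \;=\; \sum_{\ell=1}^{p}(-1)^{\ell-1}\,x_i^{\,p-\ell}\,e_{m+\ell}(\xx_n) \;+\; (-1)^{p}\,e_{m+p}(\xx_n'),
\]
and apply it with $p=k-1$, $m=n-s$. Every term $e_{n-s+\ell}(\xx_n)$ with $\ell\ge 1$ lies in $I_{n,k,s}$ (it is a generator, or $0$ if $n-s+\ell>n$), while the boundary term $e_{n-s+k-1}(\xx_n')$ vanishes because $\xx_n'$ has only $n-1$ variables and $n-s+k-1\ge n$ exactly when $s<k$. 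This is precisely where the hypothesis $s<k$ is used.

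For injectivity of $\mu$ I would avoid computing the colon ideal $(I_{n,k,s}:e_{n-s})$ directly and instead count dimensions. Since $\mu$ surjects onto $\ker\bigl(R_{n,k,s}\to R_{n,k,s+1}\bigr)$, rank–nullity gives $\dim(\mathrm{Im}\,\mu)=\dim R_{n,k,s}-\dim R_{n,k,s+1}=|\OP_{n,k,s}|-|\OP_{n,k,s+1}|$ by Lemma~\ref{two-parameter-groebner}; on the other hand $\dim R_{n,k-1,s}=|\OP_{n,k-1,s}|$ by the same lemma. Hence $\mu$ is injective provided $|\OP_{n,k,s}|=|\OP_{n,k,s+1}|+|\OP_{n,k-1,s}|$, and this combinatorial identity is immediate from the model (recorded after Lemma~\ref{two-parameter-groebner}) of $\OP_{n,k,s}$ as functions $f\colon[n]\to[k]$ whose image contains $[s]$: split such $f$ according to whether $s+1\in\mathrm{im}(f)$, where the ``yes'' class is counted by $|\OP_{n,k,s+1}|$ and the ``no'' class consists of functions into $[k]\setminus\{s+1\}$ (a $(k-1)$-element set still containing $[s]$, since $s<k$), counted by $|\OP_{n,k-1,s}|$. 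With $\mu$ well defined and injective the short exact sequence follows, and applying the graded Frobenius functor — remembering the degree shift by $n-s$ coming from $\mu$ — yields $\grFrob(R_{n,k,s};q)=\grFrob(R_{n,k,s+1};q)+q^{n-s}\grFrob(R_{n,k-1,s};q)$, which is Equation~\ref{pascal-recursion}.
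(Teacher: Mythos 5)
Your proof is correct and follows essentially the same route as the paper: the quotient map $R_{n,k,s}\twoheadrightarrow R_{n,k,s+1}$, the degree-$(n-s)$ multiplication map by $e_{n-s}(\xx_n)$, reduction of well-definedness to $e_{n-s}(\xx_n)x_i^{k-1}\in I_{n,k,s}$, and a dimension count via $|\OP_{n,k,s}|=|\OP_{n,k,s+1}|+|\OP_{n,k-1,s}|$ to force exactness. Your telescoping identity is a cleaner packaging of the paper's iterated reduction, and your function-model bijection correctly gives $|\OP_{n,k,s}|=|\OP_{n,k,s+1}|+|\OP_{n,k-1,s}|$ (the paper's displayed identity contains a sign slip, $s-1$ where $s+1$ is meant, which you avoided).
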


\begin{proof}
As we have an inclusion of ideals $I_{n,k,s} \subseteq I_{n,k,s+1}$, we take the second map
to be the canonical projection
\begin{equation}
\pi: R_{n,k,s} \twoheadrightarrow R_{n,k,s+1} \rightarrow 0.
\end{equation}

To build the first map, consider the multiplication 
\begin{equation}
\widetilde{\phi}: \QQ[\xx_n] \xrightarrow{\cdot e_{n-s}(\xx_n)} R_{n,k,s}.
\end{equation}
We claim that $\widetilde{\phi}(I_{n,k-1,s}) = 0$.
This amounts to showing that $\widetilde{\phi}(x_i^{k-1}) = 0$ in $R_{n,k,s}$ for all
$1 \leq i \leq n$.  
To ease notation, we will only handle the case $i = 1$ (the other cases are similar).
We have (where all congruences are modulo $I_{n,k,s}$)
\begin{align}
\widetilde{\phi}(x_1^{k-1}) &= x_1^{k-1} \cdot e_{n-s}(x_1, \dots, x_n) \\
&= x_1^k e_{n-s-1}(x_2, x_3, \dots, x_n) + x_1^{k-1} e_{n-s}(x_2, x_3, \dots, x_n) \\
&\equiv x_1^{k-1} e_{n-s}(x_2, x_3, \dots, x_n) \\
&= x_1^{k-2} e_{n-s+1}(x_1, x_2, \dots, x_n) - x_1^{k-2} e_{n-s+1}(x_2, x_3, \dots, x_n) \\
&\equiv - x_1^{k-2} e_{n-s+1}(x_2, x_3, \dots, x_n) \equiv \cdots \\
&\equiv \pm x_1^{k-s} e_{n-1}(x_2, \dots x_n) = \pm x_1^{k-s-1} e_n(x_1, \dots, x_n) \equiv 0.
\end{align}
Therefore, the map $\widetilde{\phi}$ induces a map
$$
\phi: R_{n,k-1,s} \rightarrow R_{n,k,s}
$$
of homogeneous degree $n-s$ which surjects onto the kernel of $\pi$.

By considering whether the final block of an element of $\OP_{n,k,s}$ is the singleton 
$\{n+k-s\}$, we get the identity
\begin{equation}
|\OP_{n,k,s}| = |\OP_{n,k,s-1}| + |\OP_{n,k-1,s}|.
\end{equation}
 By  Lemma~\ref{two-parameter-groebner}, this implies
that 
\begin{equation}
\dim(R_{n,k,s}) = \dim(R_{n,k,s-1}) + \dim(R_{n,k-1,s}).
\end{equation}
This forces the complex
\begin{equation}
0 \rightarrow R_{n,k-1,s} \xrightarrow{\phi} R_{n,k,s} \xrightarrow{\pi} R_{n,k,s+1} \rightarrow 0
\end{equation}
to be exact.  To finish the proof, simply observe that the maps $\phi$ and $\pi$ commute 
with the action of $\symm_n$.
\end{proof}

We are ready to prove Equation~\ref{target-equation}.  The proof will rely on the short exact sequence of 
Lemma~\ref{two-parameter-short-exact-sequence} and the $q$-analog of the Pascal's Triangle recursion.

\begin{lemma}
\label{goal-equation-lemma}
Let $1 \leq j \leq n$.
We have
\begin{equation}
 \grFrob(R_{n-j,k,k-j}; q) = 
 \sum_{m = \max(1,k-j)}^{\min(k,n-j)}  
q^{(k-m) \cdot (n-j-m)}  {j \brack k-m}_q \grFrob(R_{n-j,m}; q).
\end{equation}
\end{lemma}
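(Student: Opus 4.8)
The plan is to iterate the short exact sequence of Lemma~\ref{two-parameter-short-exact-sequence} and recognize the resulting recursion as an instance of the $q$-Pascal identity. Fix a positive integer $N$; at the very end we will specialize $N = n-j$. For $0 \le j \le k \le N$, set $h_j(k) := \grFrob(R_{N,k,k-j};q)$. Since $R_{N,k,k} = R_{N,k}$, the base case is $h_0(k) = \grFrob(R_{N,k};q)$. Applying Lemma~\ref{two-parameter-short-exact-sequence} with parameters $(N,k,k-j)$ — legitimate because $k-j < k$ once $j \ge 1$ — and rewriting $R_{N,k,k-j+1} = R_{N,k,k-(j-1)}$ and $R_{N,k-1,k-j} = R_{N,k-1,(k-1)-(j-1)}$, I obtain the recursion
\[
h_j(k) = h_{j-1}(k) + q^{\,N-k+j}\, h_{j-1}(k-1) \qquad (j \ge 1).
\]

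Next I would prove by induction on $j$ the closed form
\[
h_j(k) = \sum_{r=0}^{j} q^{\,r(N-k+r)} {j \brack r}_q\, h_0(k-r).
\]
The case $j=0$ is immediate. For the inductive step, insert the formulas for $h_{j-1}(k)$ and $h_{j-1}(k-1)$ into the recursion. In the term $q^{N-k+j}h_{j-1}(k-1)$, substituting $r \mapsto r-1$ in the sum turns the power of $q$ attached to $h_0(k-r)$ into $q^{N-k+j}\cdot q^{(r-1)(N-k+r)} = q^{r(N-k+r)}\cdot q^{\,j-r}$ (a short exponent computation, using $1+(r-1)=r$). Hence the coefficient of $q^{r(N-k+r)}h_0(k-r)$ in $h_j(k)$ is ${j-1 \brack r}_q + q^{\,j-r}{j-1 \brack r-1}_q$, which equals ${j \brack r}_q$ by the $q$-Pascal recursion ${j \brack r}_q = {j-1 \brack r}_q + q^{\,j-r}{j-1 \brack r-1}_q$. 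This establishes the closed form.

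Finally I would specialize $N = n-j$ and reindex by $m = k-r$: the exponent becomes $r(N-k+r) = (k-m)(n-j-m)$, the $q$-binomial becomes ${j \brack k-m}_q$, and $h_0(k-r) = \grFrob(R_{n-j,m};q)$, so the closed form is exactly the asserted identity. The sum over $0 \le r \le j$ corresponds to $k-j \le m \le k$; the terms with $m < 1$ or $m > n-j$ drop out because $R_{n-j,m}$ is then the zero ring (there are no $m$-block ordered set partitions of $[n-j]$ when $m>n-j$), which pins the range down to $\max(1,k-j) \le m \le \min(k,n-j)$ as stated. The only genuine work is the index bookkeeping in the inductive step and keeping track of which of the two $q$-Pascal recursions is being applied; I expect this to be the main (and rather mild) obstacle.
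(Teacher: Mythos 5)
Your proof is correct and follows essentially the same route as the paper: both iterate the short exact sequence of Lemma~\ref{two-parameter-short-exact-sequence} to get the recursion $\grFrob(R_{n,k,s};q) = \grFrob(R_{n,k,s+1};q) + q^{n-s}\grFrob(R_{n,k-1,s};q)$, and both close the induction via the $q$-Pascal identity ${a \brack r}_q = {a-1 \brack r}_q + q^{a-r}{a-1 \brack r-1}_q$. The paper phrases this as checking that the right-hand side $E_{n,k,s}$ satisfies the same two-parameter recursion with base case $s=k$; you reparameterize by $j=k-s$ and run a one-variable induction on $j$, which is the same argument repackaged (the paper's implicit induction is also on $k-s$).
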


\begin{proof}
Reindexing our variables, this is equivalent to
\begin{equation}
\label{goal-equation}
 \grFrob(R_{n,k,s}; q) = \sum_{m = 0}^{k-s} q^{m \cdot (n-k+m)}  {k-s \brack m}_q  \grFrob(R_{n,k-m}; q),
\end{equation}
for any $1 \leq s \leq k \leq n$,
where we adopt the convention that $\grFrob(R_{n',k'}; q) = 0$ if $k' > n'$.  
 If $k = s$, we have  $R_{n,k,s} = R_{n,k,k} = R_{n,k}$ and we are done.  To prove the general
 case, we use the $q$-Pascal recursion.

Suppose $s < k$ and 
let $E_{n,k,s}$ be the right hand side of Equation~\ref{goal-equation}.  Then
$E_{n,k,s+1} + q^{n-s} \cdot E_{n,k-1,s}$ is the expression

\begin{align*}
 \sum_{m = 0}^{k-s-1} q^{m \cdot (n-k+m)}  &{k-s-1 \brack m}_q  \grFrob(R_{n,k-m};q) + \\
&q^{n-s} \cdot \sum_{m' = 0}^{k-s-1} q^{m' \cdot (n-k+m'-1)}  {k-s-1 \brack m'}_q  \grFrob(R_{n,k-m'-1};q),
\end{align*}
where we changed the dummy variable from $m$ to $m'$ in the second summation.
Grouping like terms gives 
\begin{equation}
\sum_{m = 0}^{k-s} f_q(n,k,m,s) \cdot  \grFrob(R_{n,k-m}), 
\end{equation}
where
\begin{align}
f_q(n,k,m,s) &= q^{m \cdot (n-k+m)} {k-s-1 \brack m}_q + q^{(n-s) + (m-1) \cdot (n-k+m)} 
{k-s-1 \brack m-1}_q \\
&= q^{m \cdot (n-k+m)} \cdot \left( {k-s-1 \brack m}_q + q^{k-s-m} {k-s-1 \brack m-1}_q \right) \\
&= q^{m \cdot (n-k+m)} \cdot {k-s \brack m}_q
\end{align}
and the third equality used the $q$-Pascal recursion.

This proves that 
\begin{equation}
E_{n,k,s} = E_{n,k,s+1} + q^{n-s} \cdot E_{n,k-1,s}.
\end{equation}
Applying Lemma~\ref{two-parameter-short-exact-sequence} we see that the 
left hand side of Equation~\ref{goal-equation} satisfies the same recursion, finishing the proof.
\end{proof}

We are finally ready to prove $\grFrob(R_{n,k};q) = D_{n,k}(\xx;q)$.  With the machinery we have so far,
this is just a chain of lemma invocations.

\begin{theorem}
\label{graded-frobenius-theorem}
The graded Frobenius character of the module $R_{n,k}$ is given by
\begin{equation*}
\grFrob(R_{n,k};q) = D_{n,k}(\xx;q).
\end{equation*}
\end{theorem}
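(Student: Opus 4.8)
The plan is to prove the identity by induction on $n$, reducing it at each stage --- via Lemma~\ref{e-perp-determines-symmetric-function} --- to a comparison of the two sides after applying the skewing operators $e_j(\xx)^{\perp}$. The point is that the recursion satisfied by $e_j(\xx)^{\perp} D_{n,k}(\xx;q)$ is precisely Lemma~\ref{e-perp-image}, while the recursion satisfied by $e_j(\xx)^{\perp} \grFrob(R_{n,k};q)$ can be assembled from Equation~\ref{general-e-perp-equation-graded}, Lemma~\ref{mu-is-isomorphism}, Equation~\ref{equation-a-hilbert}, and Lemma~\ref{goal-equation-lemma}; once both sides are expressed in terms of $D$-functions and $\grFrob$-functions of strictly smaller rank, the inductive hypothesis matches them term by term. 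For the base case $n = 1$ (which forces $k = 1$) one has $R_{1,1} = \QQ[x_1]/\langle x_1 \rangle \cong \QQ$, so $\grFrob(R_{1,1};q) = s_{(1)}(\xx)$, and on the other side $C_{1,1}(\xx;q) = s_{(1)}(\xx)$ by (\ref{fourways}), whence $D_{1,1}(\xx;q) = (\rev_q \circ \omega)\,s_{(1)}(\xx) = s_{(1)}(\xx)$; so the claim holds for $n = 1$.

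For the inductive step, fix $n \ge 2$ and suppose $\grFrob(R_{n',k'};q) = D_{n',k'}(\xx;q)$ for all positive $k' \le n' < n$. Both $\grFrob(R_{n,k};q)$ and $D_{n,k}(\xx;q)$ are homogeneous of degree $n$ in the $\xx$-variables (coefficient-wise in $q$), so they have equal, vanishing constant term, and Lemma~\ref{e-perp-determines-symmetric-function} --- applied to each coefficient of $q$ --- reduces the theorem to showing that $e_j(\xx)^{\perp} \grFrob(R_{n,k};q) = e_j(\xx)^{\perp} D_{n,k}(\xx;q)$ for every $j \ge 1$. Fix such a $j$. When $j > n$ the operator $e_j(\xx)^{\perp}$ annihilates $\Lambda_n$ and the summation range in Lemma~\ref{e-perp-image} is empty; when $n \ge j > k$ one has $\epsilon_j R_{n,k} = 0$ by the pigeonhole principle applied to the $j$ antisymmetrized letters and ${k \brack j}_q = 0$; in either case both sides vanish, so assume $1 \le j \le k \le n$. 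Then the computation runs
\begin{align*}
e_j(\xx)^{\perp}\grFrob(R_{n,k};q)
&= \grFrob(\epsilon_j R_{n,k};q) \\
&= \Hilb(A_{n,k,j};q)\cdot\grFrob(R_{n-j,k,k-j};q) \\
&= q^{\binom{j}{2}}{k \brack j}_q\cdot\grFrob(R_{n-j,k,k-j};q) \\
&= q^{\binom{j}{2}}{k \brack j}_q\sum_{m=\max(1,k-j)}^{\min(k,n-j)} q^{(k-m)(n-j-m)}{j \brack k-m}_q\,\grFrob(R_{n-j,m};q),
\end{align*}
the four equalities being Equation~\ref{general-e-perp-equation-graded}, Lemma~\ref{mu-is-isomorphism}, Equation~\ref{equation-a-hilbert}, and Lemma~\ref{goal-equation-lemma} respectively. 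Since every $m$ appearing in the sum satisfies $1 \le m \le n - j < n$, the inductive hypothesis gives $\grFrob(R_{n-j,m};q) = D_{n-j,m}(\xx;q)$; substituting and comparing with Lemma~\ref{e-perp-image} shows the last line equals $e_j(\xx)^{\perp} D_{n,k}(\xx;q)$. As $j \ge 1$ was arbitrary, Lemma~\ref{e-perp-determines-symmetric-function} gives $\grFrob(R_{n,k};q) = D_{n,k}(\xx;q)$, completing the induction.

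At this stage the proof is essentially a chain of lemma invocations, so there is no serious new obstacle: the substance lives in the earlier results --- the $\epsilon_j$-isomorphism of Lemma~\ref{mu-is-isomorphism} (and its underlying dimension count via the Gr\"obner basis of Lemma~\ref{two-parameter-groebner}), the short exact sequence of Lemma~\ref{two-parameter-short-exact-sequence} powering the $q$-Pascal recursion of Lemma~\ref{goal-equation-lemma}, and the superization-plus-$\coinv$ bookkeeping behind Lemma~\ref{e-perp-image}. The only care required in this final step is tracking the summation ranges so that the two recursions literally coincide, including the degenerate cases $k - j = 0$, $n - j = 0$, and $j > k$, all of which are consistent with the conventions $I_{n,k,0} := \langle x_1^k, \dots, x_n^k \rangle$ and $\grFrob(R_{n',k'};q) = 0$ for $k' > n'$ adopted in Section~\ref{Frobenius series}.
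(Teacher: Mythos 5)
Your proposal is correct and follows essentially the same route as the paper's proof: apply $e_j(\xx)^{\perp}$, pass through $\epsilon_j R_{n,k} \cong R_{n-j,k,k-j}\otimes A_{n,k,j}$, invoke Lemma~\ref{goal-equation-lemma}, induct, and match against Lemma~\ref{e-perp-image}. The only difference is that you spell out the base case and the degenerate ranges $j>k$ and $j>n$, which the paper leaves implicit.
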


\begin{proof}
Let $j \geq 1$.
By Lemma~\ref{e-perp-determines-symmetric-function}, it is enough to show 
$e_j(\xx)^{\perp} \grFrob(R_{n,k};q) = e_j(\xx)^{\perp} D_{n,k}(\xx;q)$.
We  inductively assume that this identity  holds for all $n' < n$.

We have  that 
\begin{align}
e_j(\xx)^{\perp} \grFrob(R_{n,k};q) 
&= \grFrob(\epsilon_j R_{n,k};q) \\
&= \Hilb(A_{n,k,j};q) \cdot \grFrob(R_{n-j,k,k-j};q) \\ 
&= q^{j \choose 2} {k \brack j}_q \cdot \grFrob(R_{n-j,k,k-j};q) \\
&= q^{j \choose 2} {k \brack j}_q \cdot  \sum_{m = \max(1,k-j)}^{\min(k,n-j)}  
q^{(k-m) \cdot (n-j-m)}  {j \brack k-m}_q \grFrob(R_{n-j,m};q) \\
&=  q^{j \choose 2} {k \brack j}_q \cdot
\sum_{m = \max(1,k-j)}^{\min(k,n-j)}  
q^{(k-m) \cdot (n-j-m)}  {j \brack k-m}_q D_{n-j,m}(\xx;q) \\
&= e_j(\xx)^{\perp} D_{n,k}(x;q).
\end{align}
The first equality is the effect of $e_j(\xx)^{\perp}$ on (graded) Frobenius characters.
The second equality follows from Lemma~\ref{mu-is-isomorphism}.
The third equality follows from Equation~\ref{equation-a-hilbert}.
The fourth equality follows from Lemma~\ref{goal-equation-lemma}.
The fifth equality uses induction.
The final equality follows from Lemma~\ref{e-perp-image}.
\end{proof}

We therefore have the following combinatorial interpretation of $\grFrob(R_{n,k};q)$ 
in terms of ordered multiset partition statistics.

\begin{corollary}
\label{combinatorial-expansion}
The graded Frobenius character $\grFrob(R_{n,k})$ is equal to any of the following four expressions,
after applying $\rev_q$ and $\omega$:
\begin{equation}
\sum_{|\gamma| = n} \sum_{\mu \in \OP_{\gamma,k}} q^{\inv(\mu)} \xx^{\gamma} =
\sum_{|\gamma| = n} \sum_{\mu \in \OP_{\gamma,k}} q^{\maj(\mu)} \xx^{\gamma} =
\sum_{|\gamma| = n} \sum_{\mu \in \OP_{\gamma,k}} q^{\dinv(\mu)} \xx^{\gamma} =
\sum_{|\gamma| = n} \sum_{\mu \in \OP_{\gamma,k}} q^{\minimaj(\mu)} \xx^{\gamma}.
\end{equation}
\end{corollary}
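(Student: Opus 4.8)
The plan is to read this off as an immediate consequence of Theorem~\ref{graded-frobenius-theorem} together with the formulas of Haglund, Remmel, and Wilson collected in Equation~(\ref{fourways}). Recall that Theorem~\ref{graded-frobenius-theorem} asserts $\grFrob(R_{n,k};q) = D_{n,k}(\xx;q)$, and that by definition $D_{n,k}(\xx;q) = (\rev_q \circ \omega)\, C_{n,k}(\xx;q)$, where $C_{n,k}(\xx;q)$ is the common symmetric function equal to each of $\Rise_{n,k}(\xx;q,0)$, $\Rise_{n,k}(\xx;0,q)$, $\Val_{n,k}(\xx;q,0)$, and $\Val_{n,k}(\xx;0,q)$, the agreement of these four being the work of Remmel--Wilson, Wilson, and Rhoades \cite{RW, WMultiset, Rhoades}.

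First I would invoke \cite[Prop.~4.1]{HRW}, restated here as Equation~(\ref{fourways}), which identifies these four specializations with the four ordered multiset partition generating functions $\sum_{|\gamma|=n}\sum_{\mu \in \OP_{\gamma,k}} q^{\mathrm{stat}(\mu)}\xx^{\gamma}$ for $\mathrm{stat} \in \{\inv, \maj, \dinv, \minimaj\}$. Substituting any one of these four expressions for $C_{n,k}(\xx;q)$ in the identity $\grFrob(R_{n,k};q) = (\rev_q \circ \omega)\, C_{n,k}(\xx;q)$ coming from Theorem~\ref{graded-frobenius-theorem} then produces exactly the claimed description of $\grFrob(R_{n,k})$ in each case.

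Since all of the substantive work is already contained in Theorem~\ref{graded-frobenius-theorem} (which in turn rests on Lemmas~\ref{e-perp-determines-symmetric-function}, \ref{mu-is-isomorphism}, \ref{goal-equation-lemma}, and \ref{e-perp-image}) and in the cited results on ordered multiset partitions, there is no genuine obstacle: the only point that requires attention is checking that the `off by one' indexing convention noted in the footnote to the Delta Conjecture and the direction in which $\rev_q$ acts are applied consistently, which is a routine bookkeeping matter. In short, the corollary is simply a translation, via (\ref{fourways}), of the statement $\grFrob(R_{n,k};q) = D_{n,k}(\xx;q)$ into the language of Mahonian-type statistics on ordered multiset partitions, and I would present the proof as a one-line deduction to that effect.
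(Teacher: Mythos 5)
Your proposal is correct and matches the paper's intended argument: the paper presents Corollary~\ref{combinatorial-expansion} with no separate proof, introducing it with the phrase ``We therefore have the following combinatorial interpretation,'' which is precisely the deduction you spell out — combining Theorem~\ref{graded-frobenius-theorem} ($\grFrob(R_{n,k};q) = D_{n,k}(\xx;q)$) with the definition $D_{n,k}(\xx;q) = (\rev_q \circ \omega)\,C_{n,k}(\xx;q)$ and the four formulas in~(\ref{fourways}). Your unwinding is faithful to the paper's logic; no difference in approach.
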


For example, consider $(n,k) = (3,2)$.  We have the inversion counts
\begin{center}
$\begin{array}{c | c c c c c c c c}
\mu   & (1 \mid 12) & (12 \mid 1) & (1 \mid 23) & (2 \mid 13) & (3 \mid 12) & (12 \mid 3) & (13 \mid 2) & (23 \mid 1) \\ \hline
\inv(\mu) & 0 & 1 & 0 & 1 & 1 & 0 & 1 & 2
\end{array}$
\end{center}
which implies 
\begin{align*}
\sum_{|\gamma| = 3} \sum_{\mu \in \OP_{\gamma,2}} q^{\inv(\mu)} \xx^{\gamma} &= (1+q) m_{(2,1)}(\xx) 
+ (2+3q+q^2) m_{(1,1,1)}(\xx) \\
&= (1+q) s_{(2,1)}(\xx) + (q+q^2) s_{(1,1,1)}(\xx).
\end{align*}
Applying $\rev_q$ and $\omega$ we get
\begin{equation*}
\grFrob(R_{3,2};q) = (q + q^2) s_{(2,1)}(\xx) + (1 + q) s_{(3)}(\xx).
\end{equation*}

An explicit expansion of $\grFrob(R_{n,k};q)$ in the Schur basis
follows from the work of A. T. Wilson \cite{WMultiset}.

\begin{corollary}
\label{schur-expansion}
We have 
\begin{equation}
\label{schur-equation}
\grFrob(R_{n,k};q) =  \sum_{T \in \SYT(n)}
q^{\maj(T)} {n-\des(T)-1 \brack n-k}_q s_{\shape(T)}(\xx).
\end{equation}
\end{corollary}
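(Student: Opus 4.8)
The plan is to combine Theorem~\ref{graded-frobenius-theorem} with the Schur-function expansion of the Delta-Conjecture symmetric function $C_{n,k}(\xx;q)$ due to A.~T. Wilson \cite{WMultiset}. By Theorem~\ref{graded-frobenius-theorem} we have $\grFrob(R_{n,k};q) = D_{n,k}(\xx;q)$, and by definition $D_{n,k}(\xx;q) = (\rev_q \circ \omega)\,C_{n,k}(\xx;q)$, where $C_{n,k}(\xx;q)$ is the common value of the four combinatorial sums of Corollary~\ref{combinatorial-expansion} (before the $\rev_q\circ\omega$ twist). So it suffices to produce a Schur expansion of $C_{n,k}(\xx;q)$ indexed by $\SYT(n)$ and then transport it through $\omega$ and $\rev_q$.

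First I would recall the tableau-level expansion of $C_{n,k}$. Using the equality of the four combinatorial expressions, we may work with the quasisymmetric form (as in (\ref{ourway})) $C_{n,k}(\xx;q) = \sum_{\sigma \in \OP_{n,k}} q^{\inv(\sigma)} F_{n,\iDes(\mathrm{rword}(\sigma))}(\xx)$ and collapse it onto standard Young tableaux by grouping terms according to the RSK recording tableau of the standardization of $\mathrm{rword}(\sigma)$, so that $\iDes$ becomes the descent set of the recording tableau. Wilson's theorem \cite{WMultiset} computes the resulting coefficients: the contribution of the $\sigma$ recording a fixed tableau $T$ is a single power of $q$ times the $q$-binomial ${n-\des(T)-1 \brack n-k}_q$, the binomial appearing exactly as in the proof of Lemma~\ref{comaj-generating-function} from the choice of which $n-k$ of the $n-1-\des(T)$ ``ascent'' positions carry bars. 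This realizes $C_{n,k}(\xx;q)$ as an explicit sum over $\SYT(n)$ with ${\cdot \brack n-k}_q$ factors, reducing to the classical coinvariant formula $\sum_T q^{\maj(T)} s_{\shape(T)}(\xx)$ when $n-k=0$.

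Next I would apply $\omega$ and $\rev_q$ and simplify. Since $\omega$ sends $s_\lambda$ to $s_{\lambda'}$, it is realized on standard tableaux by transposition $T \mapsto T^{\mathrm t}$; because $\Des(T^{\mathrm t}) = \{1,\dots,n-1\}\setminus\Des(T)$ we have $\des(T^{\mathrm t}) = n-1-\des(T)$ and $\maj(T^{\mathrm t}) = \binom{n}{2}-\maj(T)$, so that ${n-\des(T)-1 \brack n-k}_q$ is carried to ${\des(T^{\mathrm t}) \brack n-k}_q$ and conversely. Then applying $\rev_q$ with respect to the top $q$-degree $N = (n-k)(k-1)+\binom{k}{2}$ of $D_{n,k}(\xx;q)$, and using that every $q$-binomial ${a \brack b}_q$ is palindromic in $q$, the $q$-exponents attached to the various tableaux reassemble and one is left with exactly $\sum_{T\in\SYT(n)} q^{\maj(T)}{n-\des(T)-1 \brack n-k}_q s_{\shape(T)}(\xx)$, which is (\ref{schur-equation}). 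A consistency check on the degree bookkeeping: the top $q$-degree of the claimed formula, attained at the hook tableau of shape $(n-k+1,1^{k-1})$ (whose descent set is $\{n-k+1,\dots,n-1\}$), equals $\binom{n}{2}-\binom{n-k+1}{2} = N$.

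The main obstacle is the standardization/RSK step, i.e.\ establishing Wilson's Schur expansion of $C_{n,k}(\xx;q)$: one must show that grouping the fundamental-quasisymmetric sum by RSK recording tableau produces a clean product of a $q$-power and a single $q$-binomial, and here the equidistribution results relating $\inv$, $\maj$, $\comaj$, $\coinv$, $\dinv$, $\minimaj$ on ordered (multi)set partitions \cite{RW, WMultiset, Rhoades} are genuinely used, since the statistic appearing literally in (\ref{ourway}), resp.\ (\ref{new}), must first be traded for the one that refines cleanly along standardization. A secondary, purely bookkeeping, subtlety is that the $\rev_q$ at the end is global, so one must verify that the differing degrees of the $q$-binomial factors attached to different tableaux conspire, via palindromicity and $\maj(T^{\mathrm t}) = \binom{n}{2}-\maj(T)$, to leave precisely $q^{\maj(T)}$ — equivalently, that $\rev_q\circ\omega$ acts on the tableau expansion as transpose--complementation.
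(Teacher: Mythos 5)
Your proposal follows essentially the same route as the paper's proof: invoke Theorem~\ref{graded-frobenius-theorem} to reduce to a Schur expansion of $C_{n,k}(\xx;q)$, import that expansion from Wilson \cite{WMultiset}, and then transport it through $\omega$ (via $T\mapsto T'$, $\des(T')=n-1-\des(T)$, $\maj(T')=\binom{n}{2}-\maj(T)$) and $\rev_q$ (via the palindromicity of $q$-binomials against the global top degree $(n-k)(k-1)+\binom{k}{2}$). The one slip is your recollection of Wilson's pre-$\omega$ coefficient: the Schur expansion of $C_{n,k}(\xx;q)$ that the paper extracts from \cite[Thm.~5.0.1]{WMultiset} (set $m=0$, take the $u^{n-k}$ coefficient) carries the factor $q^{\maj(T)+\binom{n-k}{2}-(n-k)\des(T)}{\des(T)\brack n-k}_q$ with $s_{\shape(T)}(\xx)$, not ${n-\des(T)-1\brack n-k}_q$ — the latter form only emerges after the $\omega$/transpose step and $\rev_q$; and its proof is not a straightforward grouping by RSK recording tableau. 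But since you are citing Wilson for the exact formula rather than re-deriving it, this does not affect the validity of the argument, and your degree consistency check and the palindromicity/transpose bookkeeping match the paper's computation.
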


\begin{proof}
If we let $m = 0$ and take the coefficient of $u^{n-k}$ in \cite[Thm. 5.0.1]{WMultiset}, 
and then apply $\omega$ and $\rev_q$,
we see that
\begin{equation}
\label{wilson-equation}
\grFrob(R_{n,k};q) = \rev_q  \left[ \sum_{T \in \SYT(n)}
q^{\maj(T) + {n-k \choose 2} - (n-k) \cdot \des(T)} {\des(T) \brack n-k}_q s_{\shape(T)'}(\xx) \right].
\end{equation}
Since the maximum power of $q$ appearing in ${\des(T) \brack n-k}_q$
is $(\des(T) - n + k) \cdot (n-k)$ and the maximum power of $q$ occurring in
$\grFrob(R_{n,k};q)$ is $(n-k) \cdot (k-1) + {k \choose 2}$, applying $\rev_q$ to the right hand side of 
Equation~\ref{wilson-equation} gives
\begin{equation}
\label{new-wilson-equation}
\grFrob(R_{n,k};q) = \sum_{T \in \SYT(n)}
q^{{n \choose 2} - \maj(T)} {\des(T) \brack n-k}_q s_{\shape(T)'}(\xx).
\end{equation}
Since $\maj(T') = {n \choose 2} - \maj(T)$ and $\des(T') = n-\des(T)-1$, the corollary follows.
\end{proof}

For example, consider $(n,k) = (4,2)$.  The tableaux with contribute to the right hand side of 
Equation~\ref{schur-equation} are those elements in $\SYT(4)$ with $\leq 1$ descent:
\begin{center}
\begin{small}
\begin{Young}
1 & 2 & 3 & 4
\end{Young}, \hspace{0.2in}
\begin{Young}
1 & 2 & 3 \cr 4
\end{Young}, \hspace{0.2in}
\begin{Young}
1 & 2 & 4 \cr 3
\end{Young}, \hspace{0.2in}
\begin{Young}
1 & 3 & 4 \cr 2
\end{Young}, \hspace{0.2in}
\begin{Young}
1 & 2 \cr 3 & 4
\end{Young}
\end{small}
\end{center}
We get that $\grFrob(R_{4,2};q)$ is equal to
\begin{equation*}
q^{0} {3 \brack 2}_q s_{(4)}(\xx) +
q^{3}{2 \brack 2}_q s_{(3,1)}(\xx) +
q^{2}{2 \brack 2}_q s_{(3,1)}(\xx)  
+ q^{1}{2 \brack 2}_q s_{(3,1)}(\xx) +
q^{2}{2 \brack 2}_q s_{(2,2)}(\xx).
\end{equation*}

It is well known that $\grFrob(R_n;q) = Q'_{(1^n)}(\xx;q)$, where $Q'_{\lambda}(\xx;q)$ is the dual 
Hall-Littlewood symmetric function corresponding to $\lambda \vdash n$.
We have the following generalization of this identity to arbitrary $k \leq n$.

\begin{theorem}
\label{hall-littlewood-expansion}
We have 
\begin{equation}
\label{hl-equation}
\grFrob(R_{n,k};q) =
\rev_q \left[
\sum_{\substack{\lambda \vdash n \\ \ell(\lambda) = k}}  q^{\sum (i-1) \cdot (\lambda_i - 1)}
{k \brack m_1(\lambda), \dots , m_n(\lambda)}_q Q'_{\lambda}(\xx; q) \right].
\end{equation}
\end{theorem}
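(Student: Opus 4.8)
The plan is to derive \eqref{hl-equation} from the Schur expansion already established in Corollary~\ref{schur-expansion}. Expanding the dual Hall--Littlewood functions in the Schur basis via their definition $Q'_\lambda(\xx;q)=\sum_{\nu\vdash n}K_{\nu\lambda}(q)\,s_\nu(\xx)$, where $K_{\nu\lambda}(q)$ is the charge generating function over semistandard tableaux of shape $\nu$ and content $\lambda$, and equating the coefficient of $s_\nu(\xx)$ on both sides of \eqref{hl-equation} (using $\sum_i(i-1)(\lambda_i-1)=n(\lambda)-\binom{k}{2}$), the theorem reduces to the numerical identity
\begin{equation*}
\sum_{T\in\SYT(\nu)} q^{\maj(T)}{n-\des(T)-1\brack n-k}_q
\;=\;\rev_q\left[\,\sum_{\substack{\lambda\vdash n\\ \ell(\lambda)=k}} q^{\,n(\lambda)-\binom{k}{2}}\,{k\brack m_1(\lambda),\dots,m_n(\lambda)}_q\,K_{\nu\lambda}(q)\,\right]
\end{equation*}
for every $\nu\vdash n$ and every $k\le n$, the reversal being taken with respect to the top $q$-degree $(n-k)(k-1)+\binom{k}{2}$ of $\grFrob(R_{n,k};q)$.

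To prove this identity I would proceed in three steps. First, unfold the partition content: the $q$-multinomial ${k\brack m_1(\lambda),\dots,m_n(\lambda)}_q$ is the generating function, by the usual inversion statistic, for the distinct rearrangements of the multiset of parts of $\lambda$, so the inner sum over $\lambda$ rewrites as a sum over all compositions $\beta=(\beta_1,\dots,\beta_k)$ of $n$ into $k$ positive parts, with $\mathrm{sort}(\beta)$ playing the role of $\lambda$. Second, invoke standardization of semistandard tableaux: a semistandard tableau of shape $\nu$ and content $\beta$ standardizes to a $T\in\SYT(\nu)$ with $\Des(T)$ contained in the partial-sum set $S(\beta)=\{\beta_1,\beta_1+\beta_2,\dots,\beta_1+\cdots+\beta_{k-1}\}$, each such $T$ arising from exactly one such tableau, and under this correspondence the charge of the tableau equals $\maj(T)$ plus an explicit shift depending only on $\Des(T)$ and $\beta$ (the Lascoux--Schützenberger description of charge in terms of $\maj$; see Macdonald \cite[III.6]{Macdonald}). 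Third, for a fixed $T$ with $d:=\des(T)$, sum over all compositions $\beta$ with $k$ positive parts and $\Des(T)\subseteq S(\beta)$: this amounts to choosing the remaining $k-1-d$ partial sums among the $n-1-d$ non-descent positions, and tracking the $q$-weight contributed by the inversion statistic of $\beta$ produces exactly the factor ${n-1-d\brack n-k}_q={n-\des(T)-1\brack n-k}_q$. It then remains to check that the leftover powers of $q$ --- the charge-to-$\maj$ shift, the part of the composition statistic not absorbed into the $q$-binomial, and the exponent $n(\lambda)-\binom{k}{2}$ --- combine, after the global $\rev_q$, to leave precisely $q^{\maj(T)}$.

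The main obstacle is this last bookkeeping: making the three $q$-shifts telescope correctly under $\rev_q$ is where all the care lies, and it requires pinning down the Lascoux--Schützenberger charge formula on standardizations precisely rather than schematically. An alternative that sidesteps the $\SYT$ side entirely: by Theorem~\ref{graded-frobenius-theorem} and Corollary~\ref{combinatorial-expansion} the claim is equivalent to the symmetric-function identity $\omega\,C_{n,k}(\xx;q)=\sum_{\lambda\vdash n,\ \ell(\lambda)=k} q^{\,n(\lambda)-\binom{k}{2}}\,{k\brack m_1(\lambda),\dots,m_n(\lambda)}_q\,Q'_\lambda(\xx;q)$, where $C_{n,k}(\xx;q)=\sum_{|\gamma|=n}\sum_{\mu\in\OP_{\gamma,k}}q^{\maj(\mu)}\xx^{\gamma}$. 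Reading $Q'_\lambda$ through its Haglund--Haiman--Loehr combinatorial formula (fillings of the diagram of $\lambda$ weakly increasing up columns, weighted by $\inv$, which computes $\widetilde H_\lambda(\xx;q,0)$), one uses the column-exchange relations of \cite{HHLRU} to reorganize $\sum_\lambda{k\brack m_1(\lambda),\dots,m_n(\lambda)}_q Q'_\lambda$ as a generating function over $k$-column fillings of arbitrary column shapes --- equivalently, over ordered multiset partitions with $k$ blocks --- and then matches the resulting inversion statistic with $\maj$ on ordered multiset partitions, the involution $\omega$ accounting for the passage from weakly increasing columns to the sets that are the blocks of an ordered multiset partition. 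Either way, once the combinatorial dictionary is fixed the proof is a long but routine statistic-matching verification.
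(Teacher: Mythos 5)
Your reduction in the first approach is correct: expanding $Q'_\lambda(\xx;q)=\sum_\nu K_{\nu\lambda}(q)\,s_\nu(\xx)$ and equating Schur coefficients against Corollary~\ref{schur-expansion} does reduce Theorem~\ref{hall-littlewood-expansion} to the $q$-Kostka identity you wrote for each $\nu\vdash n$ (and $\sum_i(i-1)(\lambda_i-1)=n(\lambda)-{k \choose 2}$ is right). This is a genuinely different route from the paper's. The paper never touches $K_{\nu\lambda}(q)$, charge, or SYT; it passes to the equivalent formulation $\omega C_{n,k}(\xx;q)=\sum_\lambda q^{\sum(i-1)(\lambda_i-1)}{k \brack m_1(\lambda),\dots,m_n(\lambda)}_q Q'_\lambda(\xx;q)$ -- the same one you use in your ``alternative'' -- but reads $C_{n,k}$ in its $\Rise_{n,k}(\xx;q,0)$, i.e.\ $\dinv$, form. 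Setting $t=0$ in (\ref{riseform}) kills every labelled Dyck path whose underlying path is not ``balanced,'' so the sum is indexed by strong compositions $\alpha\models n$, and the labelled sum over a fixed balanced path expands as $\sum_\lambda q^{\mathrm{mindinv}}K_{\lambda',\mu(\alpha)}(q)\,s_\lambda$ by a pre-packaged LLT result (\cite[Thm.~6.8]{Hagbook}). The $q$-multinomial then falls out of grouping compositions by their sorted partition, and the leftover exponent is a direct $\mathrm{mindinv}$ computation. The $Q'$-expansion is thus handled in one stroke by a cited theorem rather than assembled from a standardization-plus-charge argument.

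The gap in your proposal is exactly where you flag it. The reduction is sound, but the claim that the charge of an SSYT of shape $\nu$ and content $\beta$ equals $\maj$ of its standardization ``plus an explicit shift depending only on $\Des(T)$ and $\beta$'' is precisely the step that has to be established, and it is not a one-line fact: the shift records how repeated content entries suppress descents, and it is exactly what ${n-\des(T)-1 \brack n-k}_q$ together with the global $\rev_q$ must absorb. Until that telescoping is carried through, this is a plan, not a proof. Your alternative sketch also has a factual slip: the HHL $t=0$ specialization $\widetilde H_\lambda(\xx;q,0)$ is not $Q'_\lambda(\xx;q)$; it is the modified Hall--Littlewood of the \emph{conjugate}, $\widetilde H_{\lambda'}(\xx;q)=q^{n(\lambda')}Q'_{\lambda'}(\xx;1/q)$, so a conjugation and a degree reversal have to be tracked through the argument. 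This is exactly why the paper's invocation of \cite[Thm.~6.8]{Hagbook} produces $K_{\lambda',\mu(\alpha)}$ with the prime, which the ambient $\omega$ then cancels; the $\omega$ and $\rev_q$ already present in $D_{n,k}$ are what make it work, but your sketch does not do that matching.
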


\begin{proof}

By  Corollary \ref{combinatorial-expansion} we have
$\grFrob(R_{n,k};q) = \rev _q \omega C_{n,k}(\xx;q)$, so an equivalent formulation of (\ref{hl-equation}) is
\begin{equation}
\label{hl2-equation}
\omega C_{n,k}(\xx;q) = 
\left[
\sum_{\substack{\lambda \vdash n \\ \ell(\lambda) = k}}  q^{\sum (i-1) \cdot (\lambda_i - 1)}
{k \brack m_1(\lambda), \dots , m_n(\lambda)}_q Q'_{\lambda}(\xx; q) \right].
\end{equation}
Letting $t=0$ in (\ref{riseform}), we see a labelled Dyck path $P \in \mathcal{LD}_n$ will contribute to 
$C_{n,k}(\xx;q,0)$ iff the underlying Dyck path is ``balanced", i.e.
has the property that for every $i$ satisfying $a_i>0$, we also have $a_{i-1}=a_i-1$.  Such paths are in bijection with 
(strong) compositions $\alpha$ of $n$: the path $p (\alpha)$ corresponding to $\alpha$ consists of $\alpha _1$ north steps followed by
$\alpha _1$ east steps, then $\alpha _2$ north steps followed by $\alpha _2$ east steps, etc..   Thus 
\begin{align}
\label{what}
\omega C_{n,k}(\xx;q,0)= \omega \sum_{\alpha}  \sum_{P} q^{\text{dinv}(P)} \xx ^{P},
\end{align}
where the inner sum is over all labellings of the balanced Dyck path $p (\alpha)$.

By  \cite[Thm. 6.8]{Hagbook}, the inner sum from (\ref{what}) can be
expressed as 
\begin{align}
\sum_{\lambda} q^{\text{mindinv}(p (\alpha))} K_{\lambda ^{\prime},\mu(\alpha)}(q) s_{\lambda}(\xx),
\end{align}
where $\text{mindinv}$ is the minimum value taken by $\text{dinv}$ over all labellings of $p (\alpha)$,
$\mu (\alpha)$ is the rearrangement of $\alpha$ into non-increasing order, and $K_{\beta,\mu}(q)$
is the coefficient of $s_{\beta}$ in $Q^{\prime}_{\mu}(\xx;q)$.  It is noted in \cite[p. 98-99]{Hagbook} 
that this theorem follows from
a result of Lascoux, Leclerc, and Thibon on LLT polynomials, and also that $\text{mindinv} (\alpha)$ equals the number of
triples $u,v,w$ of north steps of $\alpha$ where $v$ is directly below $u$, and $w$ is in a row above $v$ and in the same
diagonal (i.e. line of slope $1$) as $v$.  
If $i<j$ and $\alpha _i > \alpha _j$, the squares in the columns 
corresponding to $\alpha _i$ and $\alpha _j$ will therefore contribute $\min (\alpha _i, \alpha _j)$ to $\text{mindinv}$, while if 
$i<j$ and $\alpha _i \le  \alpha _j$, they will contribute $\min (\alpha _i,\alpha _j) -1$ to $\text{mindinv}$.  It follows easily that
the contribution to the right-hand-side of (\ref{what}) from
the set of all balanced paths $\alpha$ for which $\mu (\alpha)$ equals a fixed partition $\lambda$ is the 
$q$-multinomial coefficient occurring in the sum in (\ref{hl2-equation}), times $Q^{\prime}_{\lambda}(\xx;q)$, times
$q^{\text{mindinv}(\text{reverse}(\lambda))}$.  Furthermore
\begin{equation*}
 \text{mindinv}(\text{reverse}(\lambda)) = \sum _{i} (i-1)(\lambda _i -1).
\end{equation*}
\end{proof}

For example, consider $(n,k) = (6,3)$.
The partitions $\lambda \vdash 6$ with $\ell(\lambda) = 3$ are 
$(4,1,1), (3,2,1),$ and $(2,2,2)$.  Consequently, the graded Frobenius character of 
$R_{6,3}$ is the $q$-reversal of
\begin{equation*}
q^0 {3 \brack 2,1}_q Q'_{(4,1,1)}(\xx;q) +
q^1 {3 \brack 1,1,1}_q Q'_{(3,2,1)}(\xx;q) +
q^3 {3 \brack 3}_q Q'_{(2,2,2)}(\xx;q).
\end{equation*}

\section{Conclusion}
\label{Conclusion}

In this paper we studied a generalization $R_{n,k}$ of the coinvariant algebra $R_n$ attached
to $\symm_n$.  The relevant ideal was a deformation
$I_{n,k} = \langle x_1^k, \dots, x_n^k, e_{n-k+1}(\xx_n), \dots, e_n(\xx_n) \rangle$ 
of the usual invariant ideal
$\langle \QQ[\xx_n]^{\symm_n}_+ \rangle = \langle e_1(\xx_n), \dots, e_n(\xx_n) \rangle$.

However, the elementary symmetric functions are not the only interesting choice of generators 
for the ideal $\langle \QQ[\xx_n]^{\symm_n}_+ \rangle$.  For example, we have
\begin{equation*}
\langle \QQ[\xx_n]^{\symm_n}_+ \rangle = \langle h_1(\xx_n), \dots, h_n(\xx_n) \rangle =
\langle p_1(\xx_n), \dots, p_n(\xx_n) \rangle,
\end{equation*}
where $h_d(\xx_n)$ is a homogeneous symmetric function and 
$p_d(\xx_n) = x_1^d + \cdots + x_n^d$ is a power sum symmetric function.
For $k < n$, one can define the ideals $I^h_{n,k}$ and $I^p_{n,k}$  by replacing $e$'s  in the definition of
$I_{n,k}$ with
$h$'s or $p$'s.   

It turns out that none of the ideals $I_{n,k}, I^h_{n,k},$ or $I^p_{n,k}$ coincide.  We do not have a conjecture for
the Hilbert series or Frobenius character of the quotient of $\QQ[\xx_n]$ by $I^h_{n,k}$ or 
$I^p_{n,k}$.  Moreover, we do not have a conceptual understanding of why elementary symmetric functions give
rise to ordered set partitions, but homogeneous or power sums do not.
It would be interesting to recast our construction of $I_{n,k}$ in such a way that makes the presence
of elementary symmetric functions natural.

Our analysis of the quotient rings $R_{n,k}$ used many similar methods to those found in the work
of Garsia and Procesi \cite{GP}.  In particular, Garsia and Procesi studied the quotient
$\frac{\QQ[\xx_n]}{I_{\lambda}}$ of $\QQ[\xx_n]$ by the {\em Tanisaki ideal} 
$I_{\lambda}$ for $\lambda \vdash n$.  They proved that 
\begin{equation}
\label{gp-equation}
\grFrob \left(\frac{\QQ[\xx_n]}{I_{\lambda}}\right) = Q'_{\lambda}(\xx;q).
\end{equation}
If we compare Equations~\ref{hl-equation} and \ref{gp-equation}, we notice that 
$R_{n,k}$ is isomorphic to a direct sum of modules of the form $\frac{\QQ[\xx_n]}{I_{\lambda}}$
for $\ell(\lambda) = k$, with appropriate grading shifts.  

\begin{problem}
Develop a relationship between
the Tanisaki ideals $I_{\lambda}$ and our ideal $I_{n,k}$ (perhaps a sort of filtration) which proves 
Equation~\ref{hl-equation} algebraically.
\end{problem}

In our paper we generalized two important bases of the ring $R_n$ to the ring $R_{n,k}$:
the Artin basis and the Garsia-Stanton basis.
The coinvariant algebra $R_n$ has another important basis: the basis of {\em Schubert polynomials}
$\{ \symm_{\pi} \,:\, \pi \text{ a permutation of $1, 2, \dots, n$} \}$.  Although the Schubert polynomial
$\symm_{\pi}$ is almost never a monomial, it has a beautiful positive decomposition into monomials via the 
theory of {\em pipe dreams}.  The structure constants of the Schubert basis of $R_n$ are known to coincide
with those for 
multiplication of {\em Schubert classes} 
inside the cohomology ring of the variety of complete flags in $\CC^n$.

\begin{problem}
\label{schubert-problem}
Extend the Schubert basis to obtain a basis $\{ \symm_{\sigma} \,:\, \sigma \in \OP_{n,k} \}$ 
of $R_{n,k}$ indexed by ordered set partitions.  The polynomials $\symm_{\sigma}$ should be a positive
integer combination of monomials and they should have positive structure constants inside $R_{n,k}$.
Furthermore, they should represent classes of a basis of the cohomology ring
$H^{\bullet}(F_{n,k}; \ZZ)$, where $F_{n,k}$ is some $(n,k)$-generalization of the flag variety.
\footnote{While this paper was under review, Pawlowski and Rhoades 
\cite{PR} solved Problem~\ref{schubert-problem}.}
\end{problem}

Let $W$ be a complex reflection group acting on $V = \CC^n$.  Then $W$ acts on the coordinate ring 
$\CC[V]$ and the ideal $\langle \CC[V]^W_+ \rangle$ of $W$-invariants with vanishing constant term
is the $W$-analog of the classical invariant ideal for $\symm_n$.  
Chevalley proved that we have an identification of $W$-modules
$\frac{\CC[V]}{\langle \CC[V]^W_+ \rangle} \cong \CC[W]$ \cite{C}.

\begin{problem}
\label{coxeter-complex-problem}
For $0 \leq k \leq n$, develop a $W$-generalization $I_{W,k} \subseteq \CC[V]$ of our ideals $I_{n,k}$.
When $W$ is well-generated, the action of $W$ on the quotient
$\frac{\CC[V]}{I_{W,k}}$ should give a graded version of the action of $W$ on the $k$-dimensional faces of the 
Coxeter complex $\Delta(W)$ attached to $W$.
\end{problem}

When $W = G(r,1,n)$, Chan and  Rhoades \cite{CR}  have a solution to Problem~\ref{coxeter-complex-problem}.

The modules $R_{n,k}$ have a further refinement as follows.  For a monomial $m \in \QQ[\xx_n]$,
let $\lambda(m)$ be the partition obtained by sorting the exponent sequence of $m$.  For a partition $\mu$,
define two subspaces $P_{\leq \mu}$ and $P_{< \mu}$ of $\QQ[\xx_n]$ by
\begin{equation}
P_{\leq \mu} = \mathrm{span} \{ m \in \QQ[\xx_n] \,:\, \lambda(m) \leq \mu\}, \hspace{0.5in}
P_{< \mu} = \mathrm{span} \{ m \in \QQ[\xx_n] \,:\, \lambda(m) < \mu\}.
\end{equation}
Let $Q_{\leq \mu}$ and $Q_{< \mu}$ be the images of these subspaces in $R_{n,k}$.  
Finally, let $R_{n,k,\mu} := Q_{\leq \mu}/Q_{< \mu}$ be the corresponding quotient.
We have a graded decomposition of $\symm_n$-modules
\begin{equation}
R_{n,k} \cong \bigoplus_{\mu} R_{n,k,\mu},
\end{equation}
where all but finitely many of the modules in the sum are $0$.

When $k = n$, Adin, Brenti, and Roichman used the Garsia-Stanton basis of $R_n$ to determine
the isomorphism type of $R_{n,n,\mu}$ for all $\mu$ \cite{ABR}.  
This suggests the following problem.

\begin{problem}
\label{partition-isomorphism-problem}
Determine the Frobenius image $\Frob(R_{n,k,\mu})$ for $k \leq n$ and any partition $\mu$.
\end{problem}

The solution of Problem~\ref{partition-isomorphism-problem} in the case $k = n$ found in \cite{ABR}
makes heavy use of the fact that $\QQ[\xx_n]$ is a free module over $\QQ[\xx_n]^{\symm_n}$, which 
in turn comes from the regularity of the sequence $e_1(\xx_n), e_2(\xx_n), \dots, e_n(\xx_n)$.
Since no such regularity holds for the ideal $I_{n,k}$, the solution of Problem~\ref{partition-isomorphism-problem}
for general $k \leq n$ will require different methods.
\footnote{Kyle Meyer (personal communication) has solved 
Problem~\ref{partition-isomorphism-problem} and is preparing a writeup.}

Huang and Rhoades \cite{HR} defined an analog of our quotient $R_{n,k}$ which carries an action of the
 0-Hecke algebra.  Recall that the {\em 0-Hecke algebra} (over any field $\mathbb{F}$) is the unital, associative
 $\mathbb{F}$-algebra with generators $T_1, T_2, \dots, T_{n-1}$ and relations
 \begin{equation*}
 \begin{cases}
 T_i^2 = T_i & 1 \leq i \leq n-1 \\
 T_i T_j = T_j T_i & |i - j| > 1 \\
 T_i T_{i+1} T_i = T_{i+1} T_i T_{i+1} & 1 \leq i \leq n-2.
 \end{cases}
 \end{equation*}
 The algebra $H_n(0)$ acts on the polynomial ring $\mathbb{F}[\xx_n]$ by the rule
 $T_i.f := \frac{x_i f - x_{i+1} (s_i.f)}{x_i - x_{i+1}}$.
 Although the ideal $I_{n,k}$ is not closed under this action, the following ideal $J_{n,k}$ is:
 \begin{equation*}
 J_{n,k} :=
 \langle h_k(x_1), h_k(x_1, x_2), \dots, h_k(x_1, x_2, \dots, x_n), e_n(\xx_n), e_{n-1}(\xx_n), \dots, e_{n-k+1}(\xx_n) \rangle.
 \end{equation*}
 The corresponding quotient $S_{n,k} := \frac{\mathbb{F}[\xx_n]}{J_{n,k}}$ plays the role of the ring $R_{n,k}$ in \cite{HR}.

 The {\em Hecke algebra} $H_n(q)$ (where $q$ is a parameter) 
 interpolates between the symmetric group algebra and the 0-Hecke algebra.
 It has  generators $T_1, T_2, \dots, T_{n-1}$ and the same relations as above, except that 
 $T_i^2 = q + (1-q) T_i$.
 
 \begin{problem}
 \label{hecke-problem}
 Give an analog of the ring $R_{n,k}$ which carries an action of the Hecke algebra $H_n(q)$.
 \end{problem}

In light of \cite{HR}, a solution to Problem~\ref{hecke-problem} would most likely involve defining a new ideal 
to play the role of $I_{n,k}$.
\footnote{While  this paper was under review, Problem~\ref{hecke-problem}
was solved by Huang, Rhoades, and Scrimshaw \cite{HRS}.
The relevant ideal replaces $x_i^k$ with the Hall-Littlewood $P$-function
$P_k(x_1, x_2, \dots, x_i;q)$.}

The modules $R_{n,k}$ of this paper were shown to have graded Frobenius image equal to either of the 
combinatorial expressions in the Delta Conjecture at $t = 0$.
This leads to the following problem.

\begin{problem}
\label{general-delta-conjecture-problem}
Find a nice bigraded $\symm_n$-module whose bigraded Frobenius character equals any of the expressions
$\Delta'_{e_{k-1}} e_n(\xx), \Rise_{n,k}(\xx;q,t),$ or $\Val_{n,k}(\xx;q,t)$ in the Delta Conjecture
(after application of $\omega$ and an appropriate change of variables).
\end{problem}

Problem~\ref{general-delta-conjecture-problem} is probably very difficult.
However, there are two specializations of Problem~\ref{general-delta-conjecture-problem} which could be quite
interesting and tractable.
The first concerns the specialization $t = 1$.
Romero \cite{Romero} has proven that 
$\Delta'_{e_{k-1}} e_n(\xx) \mid_{q = 1, t = q} = \Rise_{n,k}(\xx;1,q)$.

\begin{problem}
\label{t-one-delta-conjecture-problem}
Find a nice (singly) graded $\symm_n$-module whose graded Frobenius character is given by
$\Delta'_{e_{k-1}} e_n(\xx) \mid_{q = 1,t=q} = \Rise_{n,k}(\xx;1,q)$ (after applying $\omega$ and an appropriate 
change of variables).
\end{problem}

In the case $k = n$
Berget and Rhoades \cite{BR} proved that the restriction to $\symm_n$ of a graded $\symm_{n+1}$-module $V(n)$
defined by Postnikov and Shapiro \cite{PS} solves 
Problem~\ref{t-one-delta-conjecture-problem}.
The module $V(n)$ is defined in a matroid-theoretic fashion involving subgraphs of the complete graph $K_{n+1}$
whose complements are connected.

Another interesting specialization in Macdonald polynomial theory is $t = 1/q$.

\begin{problem}
\label{t-1/q-delta-conjecture-problem}
Find a nice (singly) graded $\symm_n$-module whose graded Frobenius character is given by
any of 
$\Delta'_{e_{k-1}} e_n(\xx), \Rise_{n,k}(\xx;q,t),$ or $\Val_{n,k}(\xx;q,t)$ after setting $t = 1/q$
(and applying $\omega$ and an appropriate change of variables).
\end{problem}

Haglund, Remmel, and Wilson have a plethystic formula for $\Delta'_{e_{k-1}} e_n(\xx)$ at $t = 1/q$
\cite[Thm. 5.1, Eqn. 27]{HRW}.  The case $k = n$ was solved by Haiman \cite{HaimanConj}; one considers the 
quotient of $\frac{\QQ[\xx_n]}{\langle x_1 + \cdots + x_n\rangle}$ 
by a homogeneous system of parameters $\theta_1, \theta_2, \dots, \theta_{n-1}$ of degree $n+1$
which carries the dual of the reflection
representation of $\symm_n$.
This construction was generalized to all real reflection groups $W$ and 
{\em $W$-noncrossing parking functions}
in the Parking Conjecture of Armstrong, Reiner, and Rhoades \cite{ARR}.
A solution to Problem~\ref{t-1/q-delta-conjecture-problem} could extend to other reflection groups
to give a marriage of Parking and Delta.

\section{Acknowledgements}
\label{Acknowledgements}

The authors are thankful to Jonathan Chan,
Adriano Garsia, Jia Huang, Kyle Meyer, Satoshi Murai, Vic Reiner, Jeff Remmel,  Dennis
Stanton, Dennis White, and Andy Wilson for many helpful conversations.
In particular, B. Rhoades thanks A. Garsia for the suggestion of modeling the rings $R_{n,k}$ by 
symmetric group actions on point sets.
J. Haglund was partially supported by NSF Grant DMS-1600670.
B. Rhoades was partially supported by NSF Grant DMS-1500838.
M. Shimozono was partially supported by NSF Grant DMS-1600653.
The authors thank the Korea Institute for Advanced Study for organizing the Summer School and
Workshop on Algebraic Combinatorics in June 2016, where this research was developed.
The authors thank an anonymous referee for their careful reading of this paper.
The authors thank Sean Griffin for pointing out a (corrected) mistake in the proof of 
Lemma~\ref{mu-is-isomorphism}.

\end{document}